\newtheorem{thm}{Theorem}[section]
\newtheorem{lem}[thm]{\textbf Lemma}
\newtheorem{cor}[thm]{Corollary}
\newtheorem*{thma}{Theorem A}
\newtheorem*{thmb}{Theorem B}
\newtheorem*{thmc}{Theorem C}
\newtheorem*{thmd}{Theorem D}
\theoremstyle{definition}
\newtheorem{defn}[thm]{Definition}
\newtheorem{exe}[thm]{\textbf Example}
\theoremstyle{remark}
\newtheorem{rem}[thm]{Remark}
\renewcommand{\phi}{\varphi}
\renewcommand{\d}{\partial}
\newcommand{\dd}{\mathrm{d}}
\renewcommand{\R}{\mathbf{R}}
\newcommand{\C}{\mathbf{C}}
\renewcommand{\m}{\mathbf{m}}
\renewcommand{\S}{\mathscr{S}}
\newcommand{\supp}{\mathop{\rm supp}}
\newcommand{\sqrbrak}[1]{\left[#1\right]}
\newcommand{\norm}[1]{\left\Vert#1\right\Vert}
\newcommand{\set}[1]{\left\{#1\right\}}
\newcommand{\brkt}[1]{\left(#1\right)}
\newcommand{\abs}[1]{\left|#1\right|}
\renewcommand{\T}{\mathbf{T}}
\newcommand{\Z}{\mathbf{Z}}
\newcommand{\pr}[2]{\langle{#1,#2}\rangle}
\renewcommand{\p}[1]{\langle{#1}\rangle}
\begin{document}
\numberwithin{equation}{section}
\title[Multilinear Fourier integral operators]
{Global boundedness of multilinear Fourier integral operators}
\author{Salvador Rodr\'iguez-L\'opez}
\address{Department of Mathematics, Uppsala University, Uppsala, SE 75106,  Sweden}
\email{salvador@math.uu.se}
\author{Wolfgang Staubach}
\address{Department of Mathematics, Uppsala University, Uppsala, SE 75106,  Sweden}
\email{wulf@math.uu.se}
\keywords{Multilinear operators, Fourier integral operators}
\subjclass[2000]{35S30, 42B20, 42B99}
\noindent \thanks{The first author was supported by the EPSRC First Grant Scheme reference number EP/H051368/1 and is partially supported by grant MTM2010-14946.\\
The second author was partially supported by the EPSRC First Grant Scheme reference number EP/H051368/1}
\begin{abstract}
We study the global boundedness of bilinear and multilinear Fourier integral operators on Banach and quasi-Banach $L^p$ spaces, where the amplitudes of the operators are smooth or rough in the spatial variables. The results are obtained by proving suitable global boundedness of rough linear Fourier integral operators with amplitudes that behave as $L^{p}$ functions in the spatial variables. The bilinear and multilinear boundedness estimates are proven by using either an iteration procedure or decomposition of the amplitudes, and thereafter applying our global results for linear Fourier integral operators with rough amplitudes.
\end{abstract}
\maketitle

\section{Introduction and summary of the results}
The study of bilinear Fourier integral operators started quite recently and owes its initiation to the pioneering work of L.~Grafakos and M.~Peloso, \cite{MR2679898}.
In that paper the authors study the local boundedness of bilinear Fourier integral operators on Banach and quasi-Banach $L^p$ spaces. More specifically, they consider H\"ormander type amplitudes $a(x, y , z, \xi , \eta)\in C^{\infty}(\R^{5n}),$ satisfying the estimate
\begin{equation}\label{defn S zero one bilinear symbol}
|\d^{\alpha}_{x} \d^{\beta_1}_{y} \d^{\beta_2}_{z} \d^{\gamma_1}_{\xi} \d^{\gamma_2}_{\eta}a(x, y, z, \xi, \eta)|\leq C_{\alpha_1,\alpha_2, \beta_1, \beta_2,\gamma}(1 + |\xi| + |\eta|)^{m-|\gamma_1|-|\gamma_2|},
\end{equation}
for some $m\in \R$ and all multi-indices $\alpha_1,\, \alpha_2,\, \beta_1 ,\, \beta_2,\,\gamma$ in $\mathbf{Z}_{+}^n,$ and phase functions $\psi(x, \xi, \eta)\in C^{\infty} (\R^{n} \times \R^{n}\setminus {0} \times \R^{n}\setminus {0}),$ homogeneous of degree 1 jointly in $(\xi, \eta)$ variables. To these amplitudes and phases, Grafakos and Peloso associate the bilinear Fourier integral operator

\begin{equation}\label{defn GP FIO}
T_{a} (f , g)(x)= \int_{\R^{4n}} a(x,y,z,\xi,\eta)\, e^{i\psi(x, \xi, \eta)+i\langle x, \xi+\eta\rangle-i\langle y, \xi\rangle -i\langle z, \eta\rangle} \, f(y) g(z)\, dy\, dz\, d\xi\, d\eta.
\end{equation}
Moreover they assume that the amplitude is compactly supported spatially in $(x, y,z)$ variables and also supported frequency-wise in a set of the form $|\xi|\thickapprox|\eta|\thickapprox|\xi +\eta|,$ and the function $\varphi(x, \xi, \eta):= \langle x, \xi +\eta\rangle+\psi(x, \xi, \eta)$ satisfies the non-degeneracy conditions $\det\,(\d^{2}_{x, \xi} \varphi) \neq 0$ and $\det\,(\d^{2}_{x, \eta} \varphi) \neq 0,$ as well as the condition $|\nabla_{x} \varphi(x, \xi, \eta)| \thickapprox |(\xi , \eta)|$ on the support of $a(x, \xi , \eta).$ Here, the notation $A\thickapprox B$ means that there are constants $c_1$ and $c_2$ such that $ c_1 B\leq A\leq c_2 B.$ Under these conditions, Grafakos and Peloso showed that the Fourier integral operator $T_a$ of order $m=0,$  defined in \eqref{defn GP FIO}, is bounded from $L^{q_1}\times L^{q_2} \to L^{r}$ with $\frac{1}{q_1} +\frac{1}{q_2} =\frac{1}{r}$ and $2\leq q_{1}, \, q_{2}, \, r' \leq \infty.$ Furthermore, by keeping the spatial variables of the amplitude in a compact set but without any support assumption on the frequency variables, it was shown in \cite{MR2679898} that $T_a$ is bounded from $L^{1}\times L^{\infty} \to L^{1}$ and $L^{\infty}\times L^{1} \to L^{1},$ provided $m<-\frac{2n-1}{2}.$

From the point of view of the operators that are investigated in this paper, Grafakos and Peloso also considered Fourier integral operators where the phase function is of the form $\varphi_{1}(x, \xi)-\langle y, \xi\rangle+ \varphi_{2}(x, \eta)- \langle z, \eta\rangle$ and showed that the corresponding operators with compactly supported amplitudes are bounded from $L^{q_1}\times L^{q_2} \to L^{r}$ with $\frac{1}{q_1} +\frac{1}{q_2} =\frac{1}{r}$ and $1< q_{1}, \, q_{2} <2,$ provided that the order $m=-(n-1)\left((\frac{1}{q_1}-\frac{1}{2})+(\frac{1}{q_2}-\frac{1}{2})\right).$

Here we would also like to mention the interesting results obtained by F. Bernicot and P. Germain \cite{MR2680189} concerning boundedness of bilinear oscillatory integral operators of the form

\begin{equation*}
B_{\lambda} (f , g)(x)= \int_{\R^{4n}} a(x,\xi,\eta)\, e^{i\lambda\psi(\xi, \eta)+i\langle x, \xi+\eta\rangle-i\langle y, \xi\rangle -i\langle z, \eta\rangle} \, f(y) g(z)\, dy\, dz\, d\xi\, d\eta,
\end{equation*}
where $|\lambda|\geq 1,$ $a(x,\xi,\eta)$ satisfies an estimate similar to \eqref{defn S zero one bilinear symbol} but is not supposed to be compactly supported in the spatial variable $x,$ and the phase function $\psi(\xi, \eta)\in C^{\infty}(\R^n \times \R^n)$ satisfies suitable non-degeneracy conditions. In \cite{MR2680189}, Bernicot and Germain showed that the operator $B_{\lambda}$ is bounded from $L^{q_1}\times L^{q_2} \to L^{r}$ when $1<q_{1}, \, q_{2}, \, r \leq 2,$ and the bound does not exceed $|\lambda|^{\varepsilon}$ provided that the order $m=0,$ and $q_1,$ $q_2 ,$ $r$ and $\varepsilon$ all satisfy certain admissibility conditions.

Motivated by the work of D. Foschi and S. Klainerman concerning bilinear estimates for wave equations \cite{MR1755116} where operators similar to those that are considered here are investigated albeit in a different context, and also motivated by the work of C. Kenig and W. Staubach \cite{MR2357989} on the so called $\Psi-$pseudodifferential operators and the investigations initiated in N. Michalowski, D.Rule and W.Staubach in \cite{MRS} concerning bilinear pseudodifferential operators with $L^p$ spatial behaviour, we consider in this paper a class of bilinear Fourier integral operators and make a systematic study of their global boundedness. We shall also deal with the problem of boundedness of certain classes of multilinear Fourier integral operators.

A fact which we would like to highlight here is that our investigations in this paper serve as a motivation for studying rough operators i.e. Fourier integrals which are non-smooth in the spatial variables of their amplitudes. Indeed as we shall see later, the boundedness of rough linear operators can be used as an efficient tool in proving boundedness for smooth or rough multilinear operators.

Here and in the sequel we will use the shorthand notation FIO for Fourier integral operators. The multilinear FIOs studied in this paper are of the form
\begin{equation}\label{defn RS FIO}
T_a(f_1, \dots, f_N) (x)= \int_{\R^{2Nn}} a(x,\xi_1, \dots, \xi_N)\, e^{ i(\sum_{j=1}^{N} \varphi_{j}(x, \xi_{j})-\langle y_{j}, \xi_{j}\rangle)} \, \prod_{j=1}^{N}f(y_j)dy_1 \dots dy_N\, d\xi_1\dots d\xi_N,
\end{equation}
where the amplitude $a(x,\xi_{1},\dots,\xi_N)$ is assumed to be measurable in the spatial variable $x$ and smooth in the frequency variables $(\xi_{1}, \dots, \xi_N),$ satisfying the estimate
\begin{equation}\label{defn intro multilinear symbols}
\norm{\partial_{\xi_{1}}^{\alpha_1}\dots\partial_{\xi_{N}}^{\alpha_N}a(x,\xi_{1},\dots,\xi_N)}_{L^p}\leq C_{\alpha_{1}\dots \alpha_{N}} \p{\xi_{1}}^{m_1-\varrho_1\abs{\alpha_1}}\dots\p{\xi_{N}}^{m_N-\varrho_N\abs{\alpha_N}},
\end{equation}
for some $m_{1}, \dots, m_N \in \R$, $\varrho_{1}, \dots \, \varrho_N \in[0,1]$ $p\in [1,\infty]$ and all multi-indices $\alpha_1,\dots, \alpha_N$ in $\mathbf{Z}_{+}^n.$  Here the phase functions $\varphi_{j} (x,\xi_{j})$ are assumed to be $C^{\infty}(\R^n \times \R^n\setminus {0})$ and homogeneous of degree 1 in their frequency variables. Furthermore, we require that the phase functions verify the {\it{strong non-degeneracy conditions}} $|\det\,\d^{2}_{x, \xi} \varphi_{j}(x,\xi)| \geq c_{j} >0$ for $j=1,\dots, N.$

We shall study the boundedness of bilinear and multilinear FIOs separately. The main distinction between our bilinear and multilinear investigations is that, in proving the boundedness of the bilinear operators, we reduce matters directly to the case of linear FIOs with rough amplitudes. In fact, we establish global $L^{q}-L^{r}$ estimates for rough linear FIOs where the amplitudes are assumed to belong to the class defined in \eqref{defn intro multilinear symbols} with $N=2,$ and use this to prove the boundedness of bilinear FIOs. The global boundedness of linear FIOs is a problem of separate interest and our investigation here is somewhat related to the investigations of D. Dos Santos Fereirra and W. Staubach in \cite{DS}. In connection to the problem of global $L^p$ boundedness of FIOs, we should also mention the work of S. Coriasco and M. Ruzhansky in \cite{MR2677978} where the authors deal with global boundedness of FIOs with smooth amplitudes that belong to a subclass of $S^{0}_{1,0}$.

In the statements of the theorems below, we assume that the phase function belongs to the class $\Phi^2$ (see Definition \ref{defn Phik phases}) which requires certain control of the growth of the mixed derivatives of orders 2 and higher of the phase. Our linear global boundedness results are as follows:

\begin{thma}
 Suppose that $0<r\leq \infty$, $1\leq p,q \leq \infty$, satisfy the relation $1/r = 1/q + 1/p$. Let $a(x,\xi)$ verify the estimate in
 \eqref{defn intro multilinear symbols} with $N=1,$ $\varphi\in \Phi^2$ be a strongly non-degeneracy phase function and suppose further that $\varrho \leq 1,$ $s:=\min(2,p,q),$ $\frac{1}{s}+\frac{1}{s'}=1$ and
\begin{equation*}
	m < -\frac{(n-1)}{2}\brkt{\frac{1}{s} + \frac{1}{\min(s',p)}} + \frac{n(\varrho-1)}{s}.
\end{equation*}
Then the linear FIO
\[T_{a} u(x)= (2\pi)^{-n} \int_{\R^n} e^{i\phi(x,\xi)} a(x,\xi) \widehat{u}(\xi) \, \dd\xi,\]
 is bounded from $L^q$ to $L^r.$
\end{thma}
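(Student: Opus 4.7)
The strategy is a dyadic-plus-angular decomposition of the amplitude in the frequency variable, combined with the $L^p$ spatial structure. First I would fix a Littlewood-Paley partition of unity $\set{\psi_j(\xi)}_{j\ge 0}$ with $\psi_j$ supported in $\abs{\xi}\sim 2^j$ for $j\ge 1$ and write $a(x,\xi)=\sum_{j\ge 0} a_j(x,\xi)$, where $a_j:=\psi_j a$. The low-frequency piece $T_{a_0}$ is handled directly by bounding its Schwartz kernel pointwise and applying Young/H\"older, using only the hypothesis $\norm{a_0(\cdot,\xi)}_{L^p}\lesssim 1$ on the support in $\xi$. It thus suffices to sum $\sum_{j\ge 1}\norm{T_{a_j}}_{L^q\to L^r}$ with enough room coming from the order $m$ and from $\varrho$.

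For each $j\ge 1$, I would apply the Seeger-Sogge-Stein procedure: cover $S^{n-1}$ by $\sim 2^{j(n-1)/2}$ spherical caps $\Xi_j^\nu$ of diameter $\sim 2^{-j/2}$ with unit vectors $\xi_\nu$ at their centers, and decompose $a_j=\sum_\nu a_j^\nu$ via a smooth partition of unity subordinate to the conical extension of this cover. The operator then splits as $T_{a_j}=\sum_\nu T_{a_j^\nu}$. For each piece, the phase $\varphi(x,\xi)-\pr{y}{\xi}$ has gradient $\nabla_\xi\varphi(x,\xi_\nu)-y$ (up to curvature corrections controlled by $\varphi\in\Phi^2$), and non-stationary phase integration by parts in the directions tangent and normal to $\Xi_j^\nu$ (using the strong non-degeneracy $\abs{\det\d^2_{x,\xi}\varphi}\ge c$ to straighten the cone) yields pointwise kernel estimates for $K_j^\nu(x,y)$ with rapid decay away from the characteristic plate of thickness $2^{-j}\times 2^{-j/2}\times\cdots\times 2^{-j/2}$ around $\set{y:\nabla_\xi\varphi(x,\xi_\nu)=y}$.

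Given these kernel estimates, the $L^q\to L^r$ bound with $1/r=1/q+1/p$ follows from combining three ingredients: (i) H\"older in the spatial variable separating the $L^p$-bounded amplitude from an $L^q\to L^q$ estimate for a smooth model operator; (ii) summation over $\nu$ via an $\ell^s$-$\ell^{s'}$ duality where $s=\min(2,p,q)$, which when $s=2$ is the Plancherel/almost-orthogonality argument of Cotlar-Stein type, and when $s<2$ is a kernel-type summation; (iii) the dyadic sum over $j$, convergent precisely under the stated restriction on $m$. The volume $2^{-j(n+1)/2}$ of each plate, raised to the appropriate combination of exponents involving $s$ and $\min(s',p)$, produces the factor $-\frac{(n-1)}{2}\brkt{\frac{1}{s}+\frac{1}{\min(s',p)}}$, while the $\varrho$-dependent gain in $\xi$-derivatives of $a$ from \eqref{defn intro multilinear symbols} contributes the term $n(\varrho-1)/s$.

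The principal obstacle is the $L^p$ roughness of the amplitude in $x$: unlike the smooth Seeger-Sogge-Stein setting one cannot integrate by parts in $x$, so $x$-frequencies of $a$ must be absorbed into an $L^p$ H\"older factor rather than into oscillatory cancellation. This forces a case split according to whether $s=2$ (so that Plancherel in $x$ is available, via $p\ge 2$ or $q\ge 2$) or $s<2$ (when one must rely on pure off-diagonal kernel decay), and the balance between these two regimes is precisely what dictates the appearance of $\min(s',p)$ in the threshold. Verifying that a single unified exponent covers both ranges and that the dyadic sums converge when the inequality on $m$ is strict is the technical heart of the argument.
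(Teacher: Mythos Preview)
Your overall skeleton (Littlewood--Paley plus Seeger--Sogge--Stein angular decomposition, kernel estimates on plates, then summing in $\nu$ and $j$) matches the paper's, but two of your key steps are described in a way that would not actually work.

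First, your step (i) --- ``H\"older in the spatial variable separating the $L^p$-bounded amplitude from an $L^q\to L^q$ estimate for a smooth model operator'' --- is not available. After the SSS reduction the effective amplitude is $A_j^\nu(x,\xi)=e^{i\Phi(x,\xi)}a(x,\xi)\chi_j^\nu(\xi)\Psi_j(\xi)$, and the phase correction $\Phi(x,\xi)$ depends on $x$; you cannot peel off $a(x,\cdot)$ as an $L^p$ factor leaving a smooth operator behind. What the paper does instead is: write $T_j^\nu u(x)=\int K_j^\nu(x,\mathbf{t}_j^\nu(x)-y)u(y)\,dy$, apply H\"older in $(\nu,y)$ simultaneously with exponents $s,s'$, then use Hausdorff--Young (valid since $1\le s\le 2$) to pass from $\|K_j^\nu(x,\cdot)\|_{L^{s'}_y}$ to $\|A_j^\nu(x,\cdot)\|_{L^s_\xi}$, and only \emph{after} that take the $L^r_x$ norm and split by H\"older into an $L^q_x$ factor (carrying $u$, handled via the SND change of variables $\mathbf{t}_j^\nu(x)\mapsto t$) and an $L^p_x$ factor (carrying $A_j^\nu$, where the $L^pS^m_\varrho$ hypothesis enters through $\|L^N A_j^\nu(\cdot,\xi)\|_{L^p}$). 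So Hausdorff--Young, not Cotlar--Stein, is the engine; and the relevant case split producing $\min(s',p)$ is $p\ge s'$ versus $p<s'$ when one pushes the $L^p_x$ norm through the $\ell^{s'}_\nu$ sum by Minkowski, not your $s=2$ versus $s<2$ dichotomy.

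Second, you have not accounted for the mechanism that produces the sharp exponent $n(\varrho-1)/s$ when $\varrho<1$. The paper obtains this by a near/far split of the $y$-integral at the anisotropic scale $g(y):=(2^{2j}y_1^2+2^j|y'|^2)^{\alpha/2}=2^{-j\varrho}$, estimates the near and far pieces separately (the far piece using the weighted kernel $h(y)^M K_j^\nu$ with $h(y)=1+2^{2j}y_1^2+2^j|y'|^2$, interpolating in $M$ to allow $M>n/(2s)$ non-integer), and then sends the auxiliary parameter $\alpha\to\infty$ to remove spurious losses. Without this device your integration-by-parts kernel bounds will only give $2M(\varrho-1)$ for some fixed integer $M$, not the endpoint $n(\varrho-1)/s$.
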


\begin{thmb}
Suppose that $0\leq \varrho\leq 1$, $p\geq 2$, $1\leq q\leq \infty$, $0<r\leq \infty$, satisfy $\frac{1}{r}=\frac{1}{p}+\frac{1}{q}$. Let $\varphi\in \Phi^2$ satisfy the strong non-degeneracy condition and $a(x,\xi)$ verify the estimate in
 \ref{defn intro multilinear symbols} with $N=1,$ and suppose that
$m<\textarc{m}(\varrho,p,q),$ with $\textarc{m}$ as in part $(a)$ of Definition \eqref{defn rune M}.\\
Then the operator $T_a$ defined in Theorem A above, is bounded from $L^q$ to $L^r.$ Furthermore, for $1<q<2$ and and $\mathcal{M}$ as in part $(b)$ of Definition \eqref{defn rune M} and
\begin{equation*}
\textarc{m}(\varrho,p,q)\leq m<\mathcal{M}(\varrho,p,q),
\end{equation*}
$T_a$ is bounded from $L^q$ to the Lorentz space $L^{r,q}.$
\end{thmb}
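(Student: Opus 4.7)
The plan is to perform a Littlewood--Paley decomposition of the amplitude in the frequency variable and bound each dyadic piece by combining the oscillatory integral theory for FIOs with smooth amplitudes of type $(\varrho,0)$ with the $L^p$ information about $a(\cdot,\xi)$, and then either to sum a geometric series (for $m<\textarc{m}$) or to interpolate (for $\textarc{m}\leq m<\mathcal M$ with $1<q<2$).

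First I would write $a=\sum_{j\geq 0} a_j$ with $a_j(x,\xi)=a(x,\xi)\Psi_j(\xi)$, where $\{\Psi_j\}$ is a standard dyadic partition supported in $|\xi|\sim 2^j$ (with $\Psi_0$ supported near the origin). The low-frequency part $T_{a_0}$ is harmless by a Young/Hausdorff--Young type argument, using only the $L^p$ hypothesis and the fact that the associated kernel is of Schwartz class in the spatial variable after integration by parts against $\nabla_\xi\varphi$, which is admissible because of the strong non-degeneracy of $\varphi\in\Phi^2$. The real work is in bounding each piece $T_{a_j}$ with $j\geq 1$. For these I would invoke the Seeger--Sogge--Stein second dyadic decomposition adapted to $\varphi\in\Phi^2$, yielding a decomposition of $T_{a_j}$ into $O(2^{j(n-1)/2})$ operators associated to tangent-plane-adapted boxes, each of which can be estimated on $L^2$ by a $TT^*$ argument. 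Interpolating the resulting $L^2\to L^2$ bound with trivial $L^1\to L^1$ and $L^\infty\to L^\infty$ estimates (the latter via the kernel form and the $L^p$-bounds on $a_j$, losing $2^{jn(1-\varrho)/p}$ derivatives by H\"older's inequality in $\xi$) produces an estimate of the form
\[
\|T_{a_j}u\|_{L^r}\leq C\, 2^{j(m-\textarc{m}(\varrho,p,q))}\,\|\Psi_j(D)u\|_{L^q},
\]
with the exponent matching exactly the definition of $\textarc{m}$ in part (a) of Definition \ref{defn rune M}.

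Next, for the first (strong-type) assertion $m<\textarc{m}(\varrho,p,q)$, I would sum over $j$: the geometric factor $2^{j(m-\textarc{m})}$ is summable, and a Littlewood--Paley square function characterization (or a Minkowski-type inequality in the quasi-Banach range) converts the sum $\sum_j \|\Psi_j(D)u\|_{L^q}$ into $\|u\|_{L^q}$, yielding the desired $L^q\to L^r$ bound. One must treat the quasi-Banach values $r<1$ and the endpoints $q=1,\infty$ separately, replacing the square function by the appropriate Hardy space / BMO version or using the $r$-triangle inequality in $L^r$; this is routine once the dyadic estimate above is in hand.

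For the Lorentz refinement when $1<q<2$ and $\textarc{m}\leq m<\mathcal M$, the series above no longer converges in the strong sense, but the dyadic pieces still admit a sharper restricted weak-type bound. The strategy is then to establish, for some $m_0<\textarc{m}$ and some $m_1$ arbitrarily close to $\mathcal M$, the two endpoint estimates
\[
T_a\colon L^q\to L^{r_0}\quad\text{and}\quad T_a\colon L^q\to L^{r_1,\infty},
\]
corresponding to the two exponents, and to interpolate between them using the real interpolation identity $(L^{r_0},L^{r_1,\infty})_{\theta,q}=L^{r,q}$ with the appropriate choice of $\theta$. The weak-type endpoint is produced by placing characteristic functions of the level sets into $L^{q,1}$ and exploiting the improved duality between $L^{q,1}$ and $L^{q',\infty}$ when $1<q<2$, so that the extra $j$-factor coming from the loss at the critical order is absorbed by a logarithmic gain in the restricted weak-type norm.

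The main obstacle I anticipate is obtaining the \emph{sharp} exponent $\textarc{m}(\varrho,p,q)$ in the single-dyadic-piece bound. One has to balance three competing losses: the $2^{j(n-1)/2}$ loss from the Seeger--Sogge--Stein decomposition (or its analogue in the non-Banach range), the $2^{jn(1-\varrho)/p}$ loss from passing from $L^p$ control of the amplitude to pointwise control on a dyadic annulus, and the shift in Hausdorff--Young exponents needed to reach the target $L^r$. Tracking these carefully through the $TT^*$ step and the subsequent complex interpolation with the trivial endpoints is where the precise form of $\textarc{m}$ and $\mathcal M$ must emerge, and at the Lorentz endpoint the restricted weak-type gain must be shown to exactly compensate the failure of absolute summability in the strong sense.
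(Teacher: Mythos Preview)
Your proposal has a genuine gap in two places, and the paper's route is both different and simpler.

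First, the ``$L^2\to L^2$ bound'' you appeal to does not exist in this setting. For an amplitude $a\in L^pS^m_\varrho$ with $p<\infty$, the operator $T_{a_j}$ cannot map $L^2$ to $L^2$: the spatial factor is only in $L^p$, so the natural target for an $L^2$ input is $L^r$ with $\frac1r=\frac1p+\frac12$. In the paper this is exactly what is proved (Theorem~\ref{thmL2}): a $TT^*$ argument on each Littlewood--Paley piece yields $\|T_{a_j}u\|_{L^r}\lesssim 2^{j(m+M(1-\varrho)/2)}\|u\|_{L^2}$, hence $T_a:L^2\to L^r$ for $m<\frac{n(\varrho-1)}{2}$. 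There is no intermediate $L^2\to L^2$ endpoint to interpolate against, and the losses you list (``$2^{jn(1-\varrho)/p}$ by H\"older in $\xi$'') do not recombine to produce $\textarc{m}(\varrho,p,q)$ in the way you sketch.

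Second, your Lorentz argument is incoherent as stated. You propose to fix $q$ and interpolate between $T_a:L^q\to L^{r_0}$ and $T_a:L^q\to L^{r_1,\infty}$, but since $\frac1r=\frac1p+\frac1q$ with $p$ determined by the amplitude, there is only one admissible target $r$ for a given $q$; you cannot have $r_0\neq r_1$. The paper instead interpolates in the $q$-variable: it takes the $L^2\to L^{2p/(p+2)}$ estimate from Theorem~\ref{thmL2} as one endpoint and the $L^q\to L^r$ estimates of Theorem~A (Theorem~\ref{thm:Linear}) at $q=\infty$ and at $q=p'$ (resp.\ $q=1$) as the other endpoints. Riesz--Thorin between these gives the strong bound for $m<\textarc{m}(\varrho,p,q)$ in the ranges $q\geq 2$ and $p'\leq q\leq 2$, and Marcinkiewicz interpolation (which for $1<q<2$ only yields $L^q\to L^{r,q}$ rather than $L^q\to L^r$) gives the Lorentz statement for $\textarc{m}\leq m<\mathcal M$. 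No restricted weak-type analysis of level sets is needed.

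In short, the paper treats Theorem~B as a one-line interpolation consequence of Theorem~A and the $L^2\to L^r$ estimate of Theorem~\ref{thmL2}; your attempt to produce $\textarc{m}$ directly from a single-scale interpolation with an $L^2\to L^2$ endpoint will not go through, and your Lorentz scheme needs to move the interpolation to the $q$-axis.
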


It is also important to note that the bounds occurring in the boundedness estimates in Theorems A and B depend only on $n$, $m$, $\varrho$, $p$, $q$, and a finite number of $C_\alpha$'s in Definition $\ref{defn intro multilinear symbols}$ with $N=1$.

Having the aforementioned linear theorems at our disposal, we can state and prove the following theorem which is one of our main results concerning bilinear FIOs.

\begin{thmc}
Assume that $q_1=\max(q_1,q_2)\geq p'$. Let
\[
    \frac{1}{r}=\frac{1}{p}+\frac{1}{q_1}+\frac{1}{q_2}.
\]
Then the bilinear FIO $T_a$, defined by \[T_{a} (f , g)(x)= \int_{\R^{4n}} a(x,\xi,\eta)\, e^{ i \varphi_{1}(x, \xi)+ i\varphi_{2}(x, \eta)-i\langle y, \xi\rangle-i\langle z, \eta\rangle} \, f(y) g(z)\, dy\, dz\, d\xi\, d\eta,\] with an amplitude satisfying \eqref{defn intro multilinear symbols} for $N=2$ verifies the estimate
\[
    \norm{T_a(f,g)}_{L^r}\leq C_{a,n} \norm{f}_{L^{q_1}}\norm{g}_{L^{q_2}},
\]
provided that
\[
    m_1<\textarc{m}(\varrho_1,p,q_1), \quad  m_2<\textarc{m}(\varrho_2,r_2,q_2) \quad {\rm and} \quad \frac{1}{r_2}=\frac{1}{p}+\frac{1}{q_1}.
\]
Moreover, if $1\leq q_2<2\leq r_2 ,$ $m_1<\textarc{m}(\varrho_1,p,q_1)$ and $\textarc{m}(\varrho_2,r_2,q_2)\leq m_2<\mathcal{M}(\varrho_2,r_2,q_2),$ then
\[
    \norm{T_a(f,g)}_{L^{r,q_2}}\leq C_{a,n} \norm{f}_{L^{q_1}}\norm{g}_{L^{q_2}}.
\]
\end{thmc}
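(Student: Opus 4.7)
The plan is to realize $T_a$ as a composition of two linear FIOs and invoke Theorem B twice. Carrying out the Fourier inversions in $y$ and $z$, we write
\[
T_a(f,g)(x) = \int_{\R^n} e^{i\varphi_2(x,\eta)}\, B(x,\eta)\, \widehat g(\eta)\, \dd\eta,\qquad
B(x,\eta) := \int_{\R^n} e^{i\varphi_1(x,\xi)}\, a(x,\xi,\eta)\, \widehat f(\xi)\, \dd\xi,
\]
and regard $B(\cdot,\cdot)$ as the amplitude of the outer linear FIO $g\mapsto T_a(f,g)$, whose phase is $\varphi_2\in \Phi^2$.

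For each multi-index $\beta$ and each $\eta$, differentiation under the integral yields $\partial_\eta^\beta B(\cdot,\eta) = T_{b_\eta^\beta}f$, where $b_\eta^\beta(x,\xi):=\partial_\eta^\beta a(x,\xi,\eta)$ is a linear amplitude of the class \eqref{defn intro multilinear symbols} with $N=1$, order $m_1$, decay $\varrho_1$, and $L^p_x$-seminorms bounded by $C_\beta\p{\eta}^{m_2-\varrho_2\abs\beta}$. Since $m_1<\textarc{m}(\varrho_1,p,q_1)$ and $\varphi_1\in \Phi^2$ is strongly non-degenerate, Theorem B gives
\[
\norm{\partial_\eta^\beta B(\cdot,\eta)}_{L^{r_2}} \leq C_\beta\, \p{\eta}^{m_2-\varrho_2\abs\beta}\, \norm{f}_{L^{q_1}},\qquad \tfrac{1}{r_2}=\tfrac{1}{p}+\tfrac{1}{q_1};
\]
crucially, the constant is uniform in $\eta$, thanks to the observation recorded after Theorems~A and~B that the operator norms depend only on finitely many of the $C_\alpha$'s. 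The hypothesis $q_1\geq p'$ is precisely what ensures $r_2\geq 1$, so the ambient $L^{r_2}$ amplitude class is a Banach space.

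This says that $B(x,\eta)$ is a rough linear amplitude of order $m_2$ and decay $\varrho_2$ in the $L^{r_2}_x$ class \eqref{defn intro multilinear symbols}, with seminorms dominated by $C\norm{f}_{L^{q_1}}$. Because $m_2<\textarc{m}(\varrho_2,r_2,q_2)$ and $\varphi_2\in \Phi^2$ is strongly non-degenerate, a second application of Theorem~B, now with parameters $(p,q)=(r_2,q_2)$, to the outer FIO produces
\[
\norm{T_a(f,g)}_{L^r}\leq C\,\norm{f}_{L^{q_1}}\,\norm{g}_{L^{q_2}},\qquad \tfrac{1}{r}=\tfrac{1}{r_2}+\tfrac{1}{q_2}=\tfrac{1}{p}+\tfrac{1}{q_1}+\tfrac{1}{q_2},
\]
which is the first assertion. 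The Lorentz bound is obtained by the identical two-step iteration, except that in the outer step one invokes the Lorentz range of Theorem~B for $1\leq q_2<2\leq r_2$ and $\textarc{m}(\varrho_2,r_2,q_2)\leq m_2<\mathcal{M}(\varrho_2,r_2,q_2)$. The only delicate point of the argument is the uniformity in $\eta$ established above; every other step is a direct quotation of Theorem~B.
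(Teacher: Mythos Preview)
Your argument is exactly the paper's: freeze $\eta$, view the inner integral as a linear FIO with $L^p$-amplitude $\partial_\eta^\beta a(\cdot,\cdot,\eta)$, apply the linear theory to place $B=\widetilde a$ in $L^{r_2}S^{m_2}_{\varrho_2}$ with seminorms $\lesssim\|f\|_{L^{q_1}}$, then apply the linear theory once more to the outer operator.

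One small correction: you cite Theorem~B exclusively, but Theorem~B carries the standing hypothesis $p\geq 2$, and nothing in Theorem~C forces $p\geq 2$ for the first step or $r_2\geq 2$ for the second (for instance $p=q_1=3$ satisfies $q_1\geq p'$ yet gives $r_2=3/2$). The paper handles this by writing ``depending on the range of indices, we apply Theorem~\ref{thm:Linear} or Theorem~\ref{thm:p2}'' --- i.e.\ Theorem~A or Theorem~B --- and the function $\textarc m$ in Definition~\ref{defn rune M} is set up precisely so that its value coincides with Theorem~A's threshold in the range $p<2$ (or $p\geq 2$, $q<p'$) and with Theorem~B's threshold otherwise. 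With that citation adjusted, your proof matches the paper's verbatim.
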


Our second result which deals with multilinear FIOs extends a theorem in \cite{MR2679898} mentioned earlier concerning bilinear FIOs with phase functions of the form  $\varphi_{1}(x, \xi)-\langle y, \xi\rangle+ \varphi_{2}(x, \eta)- \langle z, \eta\rangle.$ We extend the aforementioned result to multilinear FIOs and to all ranges of parameters in the $L^{p}$ spaces and remove the assumption of compact spatial support on the amplitude. Furthermore, we also show a boundedness result concerning multilinear oscillatory integral operators without any homogeneity assumption on the phase.

\begin{thmd} Let $1\leq p\leq \infty$, $m_{j}<0,$ $j= 1,\dots N,$ and assume that $\frac{\sum_{j=1}^{N} m_{j}}{\min_{j=1,\dots,N}m_{j}}\geq \frac{2}{p}$. Assume that $a(x,\xi_{1},\dots, \xi_{N})$ satisfies the estimate \eqref{defn intro multilinear symbols} with $\varrho_1 =\dots= \varrho_{N}=1 .$
For $1\leq q_j < \infty$ in case $p=\infty,$ and $1\leq q_j \leq \infty$ in case $p\neq \infty,$ $j=1,\dots,N, $ let
\[
    \frac{1}{r}=\frac{1}{p}+\sum_{j=1}^{N}\frac{1}{q_j}.
\]
Then the multilinear FIO $T_a$, given by \eqref{defn RS FIO} and having strongly non-degenerate phase functions $\varphi_j\in\Phi^2 ,$ $j=1, \dots, N,$ satisfies the estimate
\[
    \norm{T_a(f_1,\dots, f_N)}_{L^r}\leq C_{a,n} \norm{f_1}_{L^{q_1}}\dots\norm{f_N}_{L^{q_N}},
\]
provided that
\[
    m_j<\textarc{m}(1,\frac{p(\sum_{k=1}^{N}m_k)}{m_j},q_j),\,{\rm for}\, j=1, \dots , N.
\]
In particular, if $a(x,\xi_1, \dots, \xi_N)$ verifies the estimate
\begin{equation*}
\norm{\partial_{\xi_1}^{\alpha_1}\dots\partial_{\xi_N}^{\alpha_N} a(\cdot,\xi_1,\dots,\xi_N)}_{L^{\infty}}\leq C_{\alpha_{1}\dots \alpha_{N}} (1+|\xi_1|+\dots+|\xi_N|)^{m-\sum_{j=1}^{N}\abs{\alpha_j}}.
\end{equation*}
Then $T_a$ is bounded from $L^{q_1} \times\dots\times L^{q_N} \to L^{r}$ provided that $\frac{1}{r}= \sum_{j=1}^{N} \frac{1}{q_j}$ and
\[
    m<-(n-1)\sum_{j=1}^{N}\abs{\frac{1}{q_j}-\frac{1}{2}}.
\]
 Moreover, if $m<0$ then $T_{a}$ is bounded from $L^2 \times \dots\times L^2 \to L^{\frac{2}{N}}$ provided that the phases $\varphi_{j}\in C^{\infty}(\R^n \times \R^n)$ are strongly non-degenerate and verify the condition $ |\partial_{x}^{\alpha} \partial^{\beta}_{\xi} \varphi_{j} (x, \xi)|\leq C_{j,\alpha,\beta}$ for $j=1,\dots, N$ and all multi-indices $\alpha $ and $\beta$ with $2\leq |\alpha|+|\beta|.$ In this case, no homogeneity of the phase in the $\xi$ variable is required.
 \end{thmd}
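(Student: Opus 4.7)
The phase of \eqref{defn RS FIO} decouples into the $N$ blocks $\varphi_j(x,\xi_j)-\langle y_j,\xi_j\rangle$, so for any \emph{product amplitude} $a(x,\xi_1,\ldots,\xi_N)=d(x)\prod_{j=1}^N e_j(\xi_j)$ one has
\[
T_a(f_1,\ldots,f_N)(x)=d(x)\prod_{j=1}^N S_j f_j(x),\qquad S_j f(x):=\int_{\R^n} e^{i\varphi_j(x,\xi_j)}e_j(\xi_j)\widehat f(\xi_j)\,\dd\xi_j,
\]
reducing matters to the linear FIOs governed by Theorem~B. My plan is to write a general amplitude as an absolutely convergent sum of such product pieces, split the common spatial factor among the $N$ slots via H\"older, and then invoke Theorem~B in each slot.

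To perform the separation of variables, fix a Littlewood--Paley partition $\{\chi_k\}_{k\ge 0}$ on $\R^n$ and decompose $a=\sum_{\vec k\in\Z_+^N}a_{\vec k}$ with $a_{\vec k}(x,\xi):=a(x,\xi)\prod_j\chi_{k_j}(\xi_j)$, spectrally localized to $|\xi_j|\sim 2^{k_j}$. Fourier-expanding $a_{\vec k}$ on the product of dyadic boxes containing its support gives
\[
a_{\vec k}(x,\xi)=\sum_{\vec\ell\in\Z^{Nn}}c_{\vec k,\vec\ell}(x)\prod_{j=1}^N e^{i\langle\ell_j,2^{-k_j}\xi_j\rangle}\tilde\chi_{k_j}(\xi_j),
\]
and integration by parts in the $\xi_j$ combined with \eqref{defn intro multilinear symbols} (with $\varrho_j=1$) yields $\|c_{\vec k,\vec\ell}\|_{L^p}\le C_M\prod_j 2^{k_j m_j}\langle\ell_j\rangle^{-M}$ for every $M$. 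Next, I would distribute $c_{\vec k,\vec\ell}(x)$ as $\prod_j d_{\vec k,\vec\ell,j}(x)$ with $|d_{\vec k,\vec\ell,j}|:=|c_{\vec k,\vec\ell}|^{m_j/\sum_k m_k}$, so that $\|d_{\vec k,\vec\ell,j}\|_{L^{p_j}}=\|c_{\vec k,\vec\ell}\|_{L^p}^{m_j/\sum_k m_k}$ with $p_j:=p(\sum_k m_k)/m_j$; then $\sum_j 1/p_j=1/p$, and the hypothesis $\sum_k m_k/\min_k m_k\ge 2/p$ is precisely $p_j\ge 2$ for every $j$, which is the standing requirement of Theorem~B.

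Setting $1/r_j:=1/p_j+1/q_j$ one checks $\sum 1/r_j=1/r$, and the product structure together with H\"older gives
\[
\|T_{a_{\vec k,\vec\ell}}(f_1,\ldots,f_N)\|_{L^r}\le\prod_{j=1}^N\|T_{\vec k,\vec\ell,j}f_j\|_{L^{r_j}},
\]
where $T_{\vec k,\vec\ell,j}$ is a linear FIO of phase $\varphi_j$ whose amplitude belongs to the class of Theorem~B with order $m_j$ and $L^{p_j}$ spatial control (the factor $e^{i\langle\ell_j,2^{-k_j}\xi_j\rangle}$ only translates $f_j$). Since $m_j<\textarc{m}(1,p_j,q_j)$, Theorem~B yields $\|T_{\vec k,\vec\ell,j}f_j\|_{L^{r_j}}\le C\, 2^{k_j m_j}\langle\ell_j\rangle^{-M}\|f_j\|_{L^{q_j}}$. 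The sum in $\vec k$ converges because $m_j<0$, while the sum in $\vec\ell$ converges by taking $M$ large.

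For the special case of a joint H\"ormander amplitude of order $m$, I would partition frequency space according to which $|\xi_j|$ is the largest; on each such region and for any negative $m_1,\ldots,m_N$ summing to $m$, the elementary inequality $(1+\sum_j|\xi_j|)^m\le C\prod_j\langle\xi_j\rangle^{m_j}$ holds, so $a$ falls into the class \eqref{defn intro multilinear symbols} with $p=\infty$ and individual orders $m_j$. Choosing $m_j$ just below $-(n-1)|1/q_j-1/2|$ recovers the second conclusion via the main case. The final non-homogeneous $L^2\times\cdots\times L^2\to L^{2/N}$ statement follows because each $S_j$ is $L^2$-bounded when $m_j<0$ by a standard $TT^*$ argument that requires only strong non-degeneracy and bounded second and higher-order derivatives of $\varphi_j$, after which H\"older in $N$ factors of $L^2$-functions lands in $L^{2/N}$. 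The main obstacle is the decomposition described above: carrying out the Fourier-series expansion so as to preserve both the slot-wise $L^p_x$ norms of the coefficients and the product-form decay $\prod 2^{k_j m_j}\langle\ell_j\rangle^{-M}$; once this is done with the correct numerology, the remaining steps reduce to a clean application of H\"older and Theorem~B.
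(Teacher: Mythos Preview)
Your strategy—expand dyadic pieces of the amplitude in Fourier series, split the spatial coefficient as $|c|^{m_j/\sum m_k}$ so that each factor lands in $L^{p_j}$ with $p_j=p(\sum m_k)/m_j\ge 2$, and invoke the linear theory slot by slot—is the same core mechanism as the paper's. The execution differs: the paper takes a \emph{joint} Littlewood--Paley decomposition on $\R^{Nn}$ (a single scale $j$), and rather than summing the dyadic pieces after applying Theorem~B, it applies Cauchy--Schwarz in $j$ to pass to square functions, then Khinchin's inequality to rewrite each square function as the $L^{r_j}$-norm of a randomised linear FIO whose amplitude $\sum_j\varepsilon_j(t)\theta^{j}_{k,l}(x,\xi)$ lies in $L^{p_j}S^{s_j}_1$ uniformly in $t$. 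Thus Theorem~B is applied once per slot with the $j$-sum already absorbed into the symbol class. Your separate-LP route avoids the randomisation but must carry the $\vec k$-sum to the end; both are legitimate and of comparable strength.

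One correction is needed. The slot-wise bound $\|T_{\vec k,\vec\ell,j}f_j\|_{L^{r_j}}\le C\,2^{k_jm_j}\langle\ell_j\rangle^{-M}\|f_j\|_{L^{q_j}}$ cannot hold as written, because $\|d_{\vec k,\vec\ell,j}\|_{L^{p_j}}=\|c_{\vec k,\vec\ell}\|_{L^p}^{m_j/\sum m_k}$ depends on \emph{all} components of $\vec k,\vec\ell$, not only on $(k_j,\ell_j)$. What Theorem~B actually gives, for any fixed $s_j\in(m_j,\textarc{m}(1,p_j,q_j))$, is a bound $\lesssim 2^{-k_js_j}(1+|\ell_j|)^{\mu}\|d_{\vec k,\vec\ell,j}\|_{L^{p_j}}\|f_j\|_{L^{q_j}}$, since the amplitude $d_{\vec k,\vec\ell,j}(x)\tilde\chi_{k_j}(\xi)e^{i\langle\ell_j,2^{-k_j}\xi\rangle}$ has $L^{p_j}S^{s_j}_1$-seminorms of that size. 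Taking the product over $j$ then collapses $\prod_j\|d_{\vec k,\vec\ell,j}\|_{L^{p_j}}=\|c_{\vec k,\vec\ell}\|_{L^p}\lesssim\prod_j 2^{k_jm_j}\langle\ell_j\rangle^{-M}$, and $\prod_j 2^{k_j(m_j-s_j)}\langle\ell_j\rangle^{\mu-M}$ is summable over $(\vec k,\vec\ell)$. With this fix your argument closes. Incidentally, for the second assertion no partition by the largest $|\xi_j|$ is needed: since $1+\sum_i|\xi_i|\ge\langle\xi_j\rangle$ and each $m_j,-|\alpha_j|\le 0$, the inclusion $L^\infty S^m_1(n,N)\subset L^\infty_\Pi S^{(m_1,\ldots,m_N)}_{(1,\ldots,1)}(n,N)$ holds globally (Example~\ref{interesting example}).
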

 It is worth mentioning that Theorem D also globalises and improves the order of the operator in the $L^\infty \times L^1 \to L^1$ boundedness proven in \cite{MR2679898} which was mentioned earlier. Namely, under our assumptions one can prove boundedness of a bilinear FIO from $L^\infty \times L^1 \to L^1$ and from $L^1 \times L^\infty \to L^1$ provided $m<-n+1$.

 Our results above are of some interest in connection to problems in partial differential equations. Indeed, our theorem applies to multilinear oscillatory integrals where the phase functions are of the form $\langle x, \xi_{j} \rangle + \psi_{j}(\xi_j)$, $j=1,\dots, N$ and the case of $\psi_{j}(\xi_j)= |\xi_j|$ which is homogeneous of degree 1, is relevant in connection to the study of the wave equation. Also, in the case of phases that are not homogeneous of degree 1 in the $\xi$ variable, we can for example obtain $L^2 \times L^2 \to L^1$ estimates for bilinear operators, where the case $\psi_{j}(\xi_j)= |\xi_j|^2$ with $\xi_{j} \in \R^n$ is related to the Schr\"odinger equation, $\psi_{j}(\xi_j)= \xi_j^3$ with $\xi_j \in \R$ corresponds to the Korteweg-de Vries equation, and $\psi_{j}(\xi_j)= \langle \xi_j \rangle$ with $\xi_{j} \in \R^n$ is related to the Klein-Gordon equation.
The proof of Theorem D uses a Coifman-Meyer type symbol decomposition as well as global boundedness results for linear FIOs, obtained here and in K. Asada and D. Fujiwara's paper \cite{AF}.
The structure of the paper is as follows. In Section \ref{prelim} we set up notations and basic definitions. In Section \ref{linear FIO} we use the Seeger, Sogge and Stein decomposition to decompose the linear FIO into low frequency and high frequency parts. Thereafter, following \cite{DS}, we establish the boundedness of the low frequency portion of the linear FIOs. Next, we turn to the main global $L^{q}-L^{r}$ estimates for rough linear FIOs. Finally in Section \ref{bilinear FIO} we treat the boundedness of bilinear as well as some multilinear FIOs and also give an application of some of the results to the boundedness of certain bilinear oscillatory integral operators.

\section{Notation, Definitions and Preliminaries} \label{prelim}

In this section we define the classes of linear and multilinear amplitudes with both smooth and rough spatial behaviour and also the class of phase functions that appear in the definition of the FIOs treated here.
\subsection{Classes of linear amplitudes}\label{subsec linear amplitudes}
In the sequel we use the notation $\langle \xi\rangle$ for $(1+|\xi|^2)^{\frac{1}{2}}.$ The following classical definition is due to H\"ormander \cite{H0}.
\begin{defn}\label{defn of hormander amplitudes}
Let $m\in \mathbf{R}$, $0\leq \delta\leq 1$, $0\leq \varrho\leq 1.$ A function $a(x,\xi)\in C^{\infty}(\mathbf{R}^{n} \times\mathbf{R}^{n})$ belongs to the class $S^{m}_{\varrho,\delta}$, if for all multi-indices $\alpha, \, \beta$
   it satisfies
   \begin{align*}
      \sup_{\xi \in \R^n} \langle \xi\rangle ^{-m+\varrho\vert \alpha\vert- \delta |\beta|}
      |\partial_{\xi}^{\alpha}\partial_{x}^{\beta}a(x,\xi)|< +\infty.
   \end{align*}
\end{defn}
We shall also deal with the class $L^{p}S^m_{\varrho}$ of rough symbols/amplitudes introduced by Michalowski, Rule and Staubach in \cite{MRS} which is the extension of the class of symbols introduced by Kenig and Staubach in \cite{MR2357989}.
\begin{defn} \label{LpSmrho definition}
Let $1 \leq p \leq \infty$, $m \in \R$ and $0\leq \varrho \leq 1$ be parameters. The symbol $a \colon \R^n \times \R^{n} \to \C$ belongs to the class $L^p S^m_{\varrho},$ if $a(x,\xi)$ is measurable in $x\in \R^n ,$ $a(x,\xi)\in C^\infty(\R^n_\xi)$ a.e. $x\in \R^n$, and for each multi-index $\alpha$ there exists a constant $C_\alpha$ such that
\begin{equation*}
\|\partial_\xi^\alpha a(\cdot,\xi)\|_{L^p (\R^n)} \leq C_\alpha \langle \xi\rangle^{m-\varrho |
\alpha|},
\end{equation*}
Here we also define the associated seminorms
\[
    \abs{a}_{p,m,s}=\sum_{\abs{\alpha}\leq s} \sup_{\xi\in \R^n}  \langle \xi\rangle^{\varrho |\alpha|-m} \norm{\partial_\xi^\alpha a(\cdot,\xi)}_{L^p (\R^n)}.
\]
\end{defn}
\begin{exe} If $b\in L^p$ and $\widetilde{a}(x,\xi)\in L^\infty S^m_\varrho$ then $a(x,\xi):=b(x)\widetilde{a}(x,\xi)\in L^pS^m_\varrho$. In particular, the same holds for $\widetilde{a}(x,\xi)\in S^{m}_{\varrho,\delta},$ with any $\delta.$
\end{exe}
\begin{exe}
 Take $\psi \in C_0^\infty$ with support in $[-1,1],$ and $h$ in the Zygmund class $L_{\rm exp}[-1,1]$ (see \cite[Chp. 4]{MR928802} for further details). Then $a(x,\xi):=e^{i\xi h(x)}\psi(x)\in L^pS^m_\varrho .$ In particular the amplitude $a(x,\xi)=e ^{i\xi \log |x|}\psi(x)$ belongs to $L^pS^0_0$. Observe that in this case, for every $x\neq 0$,  $a(x,\xi)\in C^\infty$ and $\Vert \partial^\alpha_\xi a(.,\xi)\Vert_{L^p}<\infty$ for all $p\neq \infty$, but for any $\alpha>0$, $\Vert\partial^\alpha_\xi a(\cdot,\xi)\Vert_{L^\infty}=+\infty$.

More generally, if $h, \psi$ are as above and $\sigma$ is a real valued function in $S^{m}_{\varrho,0}(\R^n)$  for $m\leq 0$ then $a(x,\xi)= e^{ih(x)\,\sigma(\xi)}\psi(x)$ is in the class $L^pS^m_\varrho$.
\end{exe}
\subsection{Classes of multilinear amplitudes}\label{subsec classes of multilinear amplitudes}
The class of multilinear H\"ormander type amplitudes is defined as follows:
\begin{defn}\label{defn multilinear hormander amplitudes}
Given $m\in \R$ and $\varrho, \delta \in [0,1],$ the amplitude $a(x,\xi_1,\dots,\xi_N)\in C^{\infty} (\R^n \times \R^{Nn})$ belongs to the multilinear H\"ormander class $S^{m}_{\varrho, \delta} (n, N)$ provided that for all multi-indices $\beta$, $\alpha_j$ $j=1, \dots, N$ in $\Z_{+}^{n}$ it verifies
\begin{equation}
    \abs{\partial^\beta_x \partial_{\xi_1}^{\alpha_1}\dots\partial_{\xi_N}^{\alpha_N} a(x,\xi_1,\dots,\xi_N)}\leq C_{\alpha_1,\dots,\alpha_N,\beta} \brkt{1+\abs{\xi_1}+\dots+\abs{\xi_N}}^{m-\varrho\sum_{j=1}^{N}\abs{\alpha_j}+\delta\abs{\beta}}.
\end{equation}

\end{defn}

We shall also use the classes of non-smooth amplitudes one of which is defined as follows:

\begin{defn}\label{defn multilinear product type amplitudes} Let $\m=(m_1,\dots, m_N)\in \R^{N}$ and ${\rho}=(\varrho_1,\dots,\varrho_N)\in [0,1]^N$. The symbol $a: \R^n\times \R^{Nn}\to \C$ belongs to the class $L_{\Pi}^p S^{\m}_{\rho}(n,N)$ if for all multi-indices $\alpha_1,\dots,\alpha_N$ there exists a constant $C_{\alpha_1, \dots, \alpha_N}$ such that
\begin{equation}\label{eq:Lp}
    \norm{\partial_{\xi_1}^{\alpha_1}\dots\partial_{\xi_N}^{\alpha_N} a(\cdot,\xi_1,\dots, \xi_N)}_{L^p}\leq C_{\alpha_1, \dots, \alpha_N} \p{\xi_1}^{m_1-\varrho_1\abs{\alpha_1}}\dots \p{\xi_N}^{m_N-\varrho_N\abs{\alpha_N}}.
\end{equation}
\end{defn}

We remark that the subscript $\Pi$ in the notation $ L_{\Pi}^p S^{\m}_{\rho}(n,2)$ is there to indicate the product structure of these type of amplitudes.
\begin{exe}
Any symbol in the class $m-S^{0}_{1,1}$ introduced by L. Grafakos and R. Torres in \cite{MR1880324}, is in $L_\Pi ^\infty S^{(0,\dots,0)}_{(1,\dots, 1)}(n,m)$.
\end{exe}

\begin{exe}
Let $a_j (x, \xi_j)\in L^{p_j}S^{m_j}_{\varrho_j}$ for $j=1,\dots, N$, be a collection of linear amplitudes and assume that $\frac{1}{p}=\sum_{j=1}^{N}\frac{1}{p_j}.$ Then the multilinear amplitude
$\prod_{j=1}^{N}a_j (x, \xi_j)$ belongs to the class $L_\Pi ^pS^{\m}_{\rho}(n,N).$
\end{exe}

Also we have the following class of non-smooth amplitudes introduced in \cite{MRS}.
\begin{defn}\label{defn multilinear MRS amplitudes} The amplitude $a: \R^n\times \R^{Nn}\to \C$ belongs to the class $L^p S^{m}_{\varrho}(n,N)$ if for all multi-indices $\alpha_1,\dots, \alpha_N$ there exists a constant $C_{\alpha_1, \dots, \alpha_N}$ such that
\begin{equation}
    \norm{\partial_{\xi_1}^{\alpha_1}\dots\partial_{\xi_N}^{\alpha_N} a(\cdot,\xi_1,\dots ,\xi_N)}_{L^p}\leq C_{\alpha_1, \dots, \alpha_N} \brkt{1+\abs{\xi_1}+\dots+\abs{\xi_N}}^{m-\varrho\sum_{j=1}^{N}\abs{\alpha_j}}.
\end{equation}
\end{defn}

\begin{exe}\label{interesting example}
It is easy to see that if $m\leq 0,$ $m_j\leq 0$ for $j=1, \dots, N$ and $p\in [1, \infty],$ then
\[
    L^p S^{m}_\varrho(n,N)\subset \bigcap_{m_1+\dots +m_N=m} L^p_\Pi S^{(m_1,\dots,m_N)}_{(\varrho,\dots, \varrho)}(n,N).
\]
Moreover for all $\varrho$ and $\delta$ in $[0,1]$
\[S^{m}_{\varrho, \delta}(n,N)\subset \bigcap_{m_1+ \dots+m_N=m} L^\infty_\Pi S^{(m_1,\dots,m_N)}_{(\varrho,\dots, \varrho)}(n,N).\]
\end{exe}

\begin{exe} Let $b\in S^m_{\varrho,0}(\R^{Nn})$ and $A$ be the matrix of a linear map from $\R^n$ in $\R^{Nn}$. Then
\[
    a(x,\xi_1,\dots, \xi_N)=b\,(Ax,\xi_1,\dots,\xi_N)\in S^m_{\varrho,0}(n,N).
\]

\end{exe}

\subsection{Classes of phase functions}\label{subsec classes of phases}
We also need to describe the class of phase functions that we will use in our investigation. To this end, the class $\Phi^{k}$  defined below, will play a significant role in our investigations.
\begin{defn} \label{defn Phik phases}
A real valued function $\phi(x,\xi)$ belongs to the class $\Phi^{k}$, if $\varphi (x,\xi)\in C^{\infty}(\R^n \times\R^n \setminus 0)$, is positively homogeneous of degree $1$ in the frequency variable $\xi$, and satisfies the following condition:

For any pair of multi-indices $\alpha$ and $\beta$, satisfying $|\alpha|+|\beta|\geq k$, there exists a positive constant $C_{\alpha, \beta}$ such that
   \begin{equation}\label{C_alpha}
      \sup_{(x,\,\xi) \in \R^n \times\R^n \setminus 0}  |\xi| ^{-1+\vert \alpha\vert}\vert \partial_{\xi}^{\alpha}\partial_{x}^{\beta}\phi(x,\xi)\vert
      \leq C_{\alpha , \beta}.
   \end{equation}
In connection to the problem of local boundedness of Fourier integral operators, one considers phase functions $\varphi(x,\xi)$ that are positively homogeneous of degree $1$ in the frequency variable $\xi$ for which $\det [\partial^{2}_{x_{j}\xi_{k}} \varphi(x,\xi)]\neq 0.$ The latter is referred to as the {\it{non-degeneracy condition}}. However, for the purpose of proving global regularity results, we require a stronger condition than the aforementioned weak non-degeneracy condition.

\end{defn}
\begin{defn}\label{strong non-degeneracy}
 A real valued phase $\varphi\in C^{2}(\R^n \times\R^n \setminus 0)$ satisfies the {\it{strong non-degeneracy condition}} or the {\it{SND condition}} for short, if there exists a positive constant $c$ such that
\begin{equation}\label{lower_bound on mixed hessian}
\Big|\det \frac{\partial^{2}\varphi(x,\xi)}{\partial x_j \partial \xi_k}\Big| \geq c,
\end{equation}
for all $(x,\,\xi) \in \R^n \times\R^n \setminus 0$.
\end{defn}

\begin{exe} A phase function intimately connected to the study of the wave operator, namely $\varphi(x,\xi)= |\xi|+ \langle x, \xi\rangle$ is strongly non-degenerate and belongs to the class $\Phi^2$.
\end{exe}

As is common practice, we will denote constants which can be determined by known parameters in a given situation, but whose
value is not crucial to the problem at hand, by $C$. Such parameters in this paper would be, for example, $m$, $\rho$, $p$, $n$ and the constants appearing in the definitions of various symbol classes. The value of $C$ may differ from line to line, but in each instance could be estimated if necessary. We also write sometimes $a\lesssim b$ as shorthand for $a\leq Cb$.

\section{Tools in proving boundedness of rough linear FIOs}

Here we collect the main tools in proving our boundedness results for linear FIOs. The following decomposition due to A. Seeger, C. Sogge and E. M. Stein is by now classical.
\subsection{The Seeger-Sogge-Stein decomposition}\label{sss decomposition}

One starts by taking a Littlewood-Paley partition of unity
\begin{equation}\label{eq:LittlewoodPaley}
    \Psi_0(\xi) +\sum_{j=1}^{\infty}\Psi_{j}(\xi)=1,
\end{equation}
where supp $\Psi_0\subset \{\xi;\,  \vert \xi \vert \leq 2 \}$, supp
$\Psi\subset \{\xi;\, \frac{1}{2} \leq \vert \xi \vert \leq 2 \}$
and $\Psi_{j}(\xi) =\Psi(2^{-j}\xi)$.

To get useful estimates for the amplitude and the phase function, one
imposes a second decomposition on the former Littlewood-Paley
partition of unity in such a way that each dyadic shell $2^{j-1}\leq
\vert \xi\vert\leq 2^{j+1}$ is partitioned into truncated cones of
thickness roughly $2^{\frac{j}{2}}$. Roughly $2^{\frac{(n-1)j}{2}}$
such elements are needed to cover the shell $2^{j-1}\leq \vert
\xi\vert\leq 2^{j+1}$. For each $j$ we fix a collection of unit
vectors $\{\xi^{\nu}_{j}\}_{\nu}$ that satisfy,
\begin{enumerate}
\item $\vert \xi^{\nu}_{j}-\xi^{\nu'}_{j}\vert\geq
2^{\frac{-j}{2}},$ if $\nu\neq \nu '$.
\item If $\xi\in\mathbb{S}^{n-1}$, then there exists a $
\xi^{\nu}_{j}$ so that $\vert \xi -\xi^{\nu}_{j}\vert
<2^{\frac{-j}{2}}$.
\end{enumerate}
Let $\Gamma^{\nu}_{j}$ denote the cone in the $\xi$ space whose
 central direction is $\xi^{\nu}_{j}$, i.e.
\begin{equation*}
    \Gamma^{\nu}_{j}=\{ \xi;\, \vert \frac{\xi}{\vert\xi\vert}-\xi^{\nu}_{j}\vert\leq 2\cdot 2^{\frac{-j}{2}}\}.
\end{equation*}
One can construct an associated partition of unity given by
functions $\chi^{\nu}_{j}$, each homogeneous of degree $0$ in $\xi$
and supported in $\Gamma^{\nu}_{j}$ with,
\begin{equation*}
\sum_{\nu}\chi^{\nu}_{j}(\xi)=1,\,\,\, \text{ for all}\,\,\, \xi \neq
0\,\,\, \text{and all}\,\,\, j
\end{equation*}
and
\begin{equation}
\label{chiestimate 1} \vert \partial
^{\alpha}_{\xi}\chi^{\nu}_{j}(\xi)\vert\leq C_{\alpha}
2^{\frac{\vert \alpha\vert j}{2}}\vert \xi\vert ^{-\vert \alpha
\vert},
\end{equation}
with the improvement
\begin{equation}
\label{chiestimate 2} \vert \partial
^{N}_{\xi_{1}}\chi^{\nu}_{j}(\xi)\vert\leq C_{N}
\vert \xi\vert ^{-N},
 \end{equation}
for $N\geq 1$.
Using $\Psi_{j}$'s and $\chi^{\nu}_{j}$'s, we can construct a
Littlewood-Paley partition of unity
\begin{equation*}
\Psi_{0}(\xi)+ \sum_{j=1}^{\infty}\sum_{\nu}\chi^{\nu}_{j}(\xi)\,
\Psi_{j}(\xi)=1.
\end{equation*}
Given a FIO
\begin{equation}\label{defn basic linear FIO}
  T_{a}u(x)= \frac{1}{(2\pi)^{n}}\int_{\mathbb{R}^{n}}e^{i
\varphi(x,\xi)}a(x,\xi)\hat{u}(\xi)
\, \dd\xi,
\end{equation}
we decompose it as
\begin{multline}
\label{Tdecomp}
T_0 u(x) + \sum_{j=1}^{\infty}\sum_{\nu}T^{\nu}_{j} u(x):=
\frac{1}{(2\pi)^{n}}\int_{\mathbb{R}^{n}}e^{i
\varphi(x,\xi)}a(x,\xi) \Psi_0(\xi)\hat{u}(\xi)
\, \dd\xi\\ +\frac{1}{(2\pi)^{n}}
\sum_{j=1}^{\infty}\sum_{\nu}\int_{\mathbb{R}^{n}}e^{i
\varphi(x,\xi)+i\langle x,\xi\rangle}a(x,\xi)\chi^{\nu}_{j}(\xi)
\Psi_{j}(\xi)\hat{u}(\xi)\, \dd\xi.
\end{multline}
We refer to $T_0$ as the {\it{low frequency part}}, and $T^{\nu}_{j}$ as the {\it{high frequency part}} of the FIO $T_{a}.$\\

Now, one chooses the axis in $\xi$ space such that $\xi_1$ is in the direction of $\xi^{\nu}_{j}$ and $\xi'=(\xi _2 , \dots , \xi_{n})$ is perpendicular to $\xi^{\nu}_{j}$ and introduces the phase function $\Phi(x,\xi):=
\varphi(x,\xi)-\langle (\nabla_{\xi}\varphi)(x,\xi^{\nu}_{j}),
\xi\rangle$ and the amplitude
\begin{equation}\label{amplitude}
A^{\nu}_{j}(x,\xi):= e^{i \Phi(x,\xi)}a(x,\xi)\chi^{\nu}_{j}(\xi)\,
\Psi_{j}(\xi).
\end{equation}

\noindent It can be verified, see e.g. \cite[p. 407]{MR1232192}
, that the phase
$\Phi(x,\xi)$ satisfies the following two estimates
\begin{eqnarray}
    \label{phaseestim1} \vert (\frac{\partial}{\partial \xi_1})^{N}
\Phi(x,\xi)\vert\leq C_{N} 2^{-Nj},\\
\label{phaseestim2} \vert (\nabla_{\xi'})^{N} \Phi(x,\xi)\vert\leq
C_{N} 2^{\frac{-Nj}{2}},
\end{eqnarray}
for $N\geq 2$ on the support of $A^{\nu}_{j}(x,\xi)$.

Using these, we can rewrite $T^{\nu}_{j}$ as a FIO with
a linear phase function,
\begin{equation}
\label{defn rewrittentnuj} T^{\nu}_{j} u(x)
=\frac{1}{(2\pi)^{n}}\int_{\mathbb{R}^{n}}A^{\nu}_{j}(x,\xi) e^{i
\langle (\nabla_{\xi}\varphi)(x,\xi^{\nu}_{j}),\, \xi\rangle} \hat{u}(\xi)\, \dd\xi.
\end{equation}

\subsection{Reduction of the phase function}\label{subsec phase reduction}

In this paper we will only deal with classes $\Phi^1,$ and more importantly $\Phi^2 ,$ of phase functions. In the case of class $\Phi^{2},$ we have only required control of those frequency derivatives of the phase function which are greater or equal to $2$. This restriction is motivated by the simple model case phase function $\varphi(x,\xi)=|\xi|+ \langle x,\xi\rangle$ for which the first order $\xi$-derivatives of the phase are not bounded but all the derivatives of order equal or higher than 2 are indeed bounded and so $\varphi(x,\xi)\in\Phi^2$. However in order to handle the boundedness of the low frequency parts of FIOs, one also needs to control the first order $\xi$ derivatives of the phase. The following phase reduction lemma will reduce the phase to a linear term plus a phase for which the first order frequency derivatives are bounded.

\begin{lem}\label{lem:change}
Any FIO $T_{a}$ of the type \eqref{defn basic linear FIO} with amplitude $a(x,\xi)\in L^{p}S^{m}_{\varrho}$ and phase function $\varphi(x,\xi)\in \Phi^2$, can be written as a finite sum of Fourier integral operators of the form
\begin{equation}\label{reduced rep of Fourier integral operator}
  \frac{1}{(2\pi)^{n}} \int a(x,\xi)\, e^{i\psi(x,\xi)+i\langle \nabla_{\xi}\varphi(x,\zeta),\xi\rangle}\, \widehat{u}(\xi) \, \dd\xi
\end{equation}
where $\zeta$ is a point on the unit sphere $\mathbf{S}^{n-1}$, $\psi(x,\xi)\in \Phi^1$ and $a(x,\xi) \in L^{p} S^{m}_{\varrho}$ is localized in the $\xi$ variable around the point $\zeta$.

\end{lem}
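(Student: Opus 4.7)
The plan is to split $T_a$ by means of a finite conic partition of unity in the frequency variable, so that on each piece the amplitude is localized around a unit vector $\zeta \in \mathbf{S}^{n-1}$, and then on each piece subtract the linear phase $\langle \nabla_\xi \varphi(x,\zeta), \xi\rangle$ from $\varphi$. The residual phase $\psi$ is automatically positively homogeneous of degree $1$ in $\xi$, and its regularity estimates for derivatives of order at least one come from the $\Phi^2$ control on $\varphi$ together with Euler's identity.

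First I would choose unit vectors $\zeta_1,\dots,\zeta_M$ and a smooth partition of unity $\{\chi_l\}$ on $\mathbf{S}^{n-1}$ with each $\chi_l$ supported in a small geodesic cap around $\zeta_l$. Extending $\chi_l$ to $\R^n\setminus 0$ by homogeneity of degree $0$ and smoothing out the origin via a fixed $\eta\in C^\infty_c(\R^n)$ equal to $1$ near $0$, for instance by setting $\tilde\chi_l(\xi):=(1-\eta(\xi))\chi_l(\xi/|\xi|)+\eta(\xi)/M$, yields a smooth partition of unity $\sum_{l=1}^M \tilde\chi_l\equiv 1$ on $\R^n$ with $\tilde\chi_l\in S^0_{1,0}$. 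Setting $a_l:=a\,\tilde\chi_l$, a Leibniz computation combined with the $L^pS^m_\varrho$ seminorms shows $a_l\in L^pS^m_\varrho$, localized conically around $\zeta_l$ for $|\xi|\geq 1$. Defining $\psi_l(x,\xi):=\varphi(x,\xi)-\langle\nabla_\xi\varphi(x,\zeta_l),\xi\rangle$, the trivial identity $\varphi=\psi_l+\langle\nabla_\xi\varphi(x,\zeta_l),\xi\rangle$ gives the decomposition $T_a=\sum_l T_l$ with each $T_l$ of the form \eqref{reduced rep of Fourier integral operator}.

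The only substantive task is to verify $\psi_l\in\Phi^1$. Both summands defining $\psi_l$ are positively homogeneous of degree $1$ in $\xi$, hence so is $\psi_l$, and one needs only to bound $|\xi|^{-1+|\alpha|}|\partial_\xi^\alpha\partial_x^\beta\psi_l|$ uniformly whenever $|\alpha|+|\beta|\geq 1$. When $|\alpha|\geq 2$ the linear term is annihilated and the estimate follows at once from $\varphi\in\Phi^2$. When $|\alpha|=1$ and $|\beta|=0$, $\partial_{\xi_j}\psi_l(x,\xi)=\partial_{\xi_j}\varphi(x,\xi)-\partial_{\xi_j}\varphi(x,\zeta_l)$ is the difference of values on $\mathbf{S}^{n-1}$ of a degree-$0$ homogeneous function whose sphere-gradient is controlled by the $\Phi^2$ bound on the second $\xi$-derivatives of $\varphi$, so a path integral along the sphere produces a bound uniform in $x$; for $|\alpha|=1$ and $|\beta|\geq 1$ the mixed derivative has total order at least $2$ and $\Phi^2$ applies to each term directly.

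The delicate case is $|\alpha|=0$ with $|\beta|\geq 1$, since $\Phi^2$ alone does not bound $\partial_x^\beta\varphi$ when $|\beta|=1$. The key observation is Euler's identity: since $\varphi$ is positively homogeneous of degree $1$ in $\xi$, one has $\partial_x^\beta\varphi(x,\xi)=\sum_{j=1}^n\xi_j\,\partial_x^\beta\partial_{\xi_j}\varphi(x,\xi)$, and the analogous identity at $\xi=\zeta_l$ reproduces the $x$-derivatives of the linear correction; subtracting gives
\[
    \partial_x^\beta\psi_l(x,\xi)=\sum_{j=1}^n\xi_j\Bigl(\partial_x^\beta\partial_{\xi_j}\varphi(x,\xi)-\partial_x^\beta\partial_{\xi_j}\varphi(x,\zeta_l)\Bigr),
\]
where each factor in parentheses now has total order $|\beta|+1\geq 2$, so $\Phi^2$ produces $|\partial_x^\beta\psi_l(x,\xi)|\lesssim|\xi|$ uniformly. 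This is the step where the choice of a linear correction based on $\nabla_\xi\varphi(x,\zeta)$ rather than any other reference direction is essential, and it is the only non-routine point of the argument.
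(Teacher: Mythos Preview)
Your proof is correct and follows essentially the same approach as the paper: a finite conic partition of unity in $\xi$, followed by subtracting the linear phase $\langle\nabla_\xi\varphi(x,\zeta),\xi\rangle$ and verifying that the remainder lies in $\Phi^1$. In fact your write-up is more complete than the paper's, which only checks the pure $\xi$-derivative estimates and then appeals to an unspecified extension; your explicit treatment of the case $|\alpha|=0$, $|\beta|\geq 1$ via Euler's identity and your smoothing near the origin fill in details the paper leaves implicit.
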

\begin{proof}
We start by localizing the amplitude in the $\xi$ variable by introducing an open convex covering $\{U_l\}_{l=1}^{M},$ with maximum of diameters $d$, of the unit sphere $\mathbf{S}^{n-1}$.
Let $\Xi_{l}$ be a smooth partition of unity subordinate to the covering $U_l$ and set $a_{l}(x,\xi)=a(x,\xi)\, \Xi_{l}(\frac{\xi}{|\xi|}).$ We set
\begin{equation}
  T_{l}u(x):= \frac{1}{(2\pi)^n} \int \, a_l(x,\xi)\, e^{i\varphi(x,\xi)}\,\widehat{u}(\xi) \, \dd\xi,
\end{equation}
and fix a point $\zeta \in U_l.$  Then for any $\xi\in U_l$, Taylor's formula and Euler's homogeneity formula yield
\begin{equation}
  \varphi(x,\xi) = \varphi(x,\zeta) + \langle \nabla_{\xi}\varphi (x,\zeta), \xi-\zeta\rangle +\psi (x,\xi)= \psi(x,\xi)+\langle \nabla_{\xi}\varphi(x,\zeta),\xi\rangle
\end{equation}

Furthermore, for $\xi\in U_l$, $\partial_{\xi_k} \psi(x,\xi)= \partial_{\xi_k} \varphi(x,\frac{\xi}{|\xi|})-\partial_{\xi_k} \varphi(x,\zeta)$, so the mean value theorem and the definition of class $\Phi^2$ yield $|\partial_{\xi_k} \psi(x,\xi)|\leq Cd$ and for $|\alpha|\geq 2$, $|\partial^{\alpha}_{\xi} \psi(x,\xi)|\leq C |\xi|^{1-|\alpha|}.$ Here we remark in passing that in dealing with function $\psi(x,\xi),$ we only needed to control the second and higher order $\xi-$derivatives of the phase function $\varphi(x,\xi)$ and this gives a further motivation for the definition of the class $\Phi^2 .$ We shall now extend the function $\psi(x,\xi)$ to the whole of $\mathbf{R}^{n}\times \mathbf{R}^{n}\setminus 0$, preserving its properties and we denote this extension by $\psi(x,\xi)$ again. Hence the Fourier integral operators $T_l$ defined by
\begin{equation}\label{eq:chng_var}
  T_{l}u(x):=\frac{1}{(2\pi)^{n}} \int a_l(x,\xi)\,e^{i\psi(x,\xi)+i\langle \nabla_{\xi}\varphi(x,\zeta),\xi\rangle}\, \widehat{u}(\xi) \, \dd\xi,
\end{equation}
are the localized pieces of the original Fourier integral operator $T_{a}$ and therefore $T=\sum_{l=1}^{M} T_l$ as claimed.
\end{proof}

\subsection{A uniform non-stationary phase estimate}\label{subsec uniform non stat phase}
We will also need a uniform non-stationary phase estimate that yields a uniform bound for certain oscillatory integrals that arise as kernels of certain  operators. To this end, we have:
\begin{lem}\label{lem:technic} Let $\mathcal{K}\subset \R^n$ be a compact set, $U\supset \mathcal{K}$ an open set and $k$ a nonnegative integer. For $u\in C^\infty_0(\mathcal{K})$ and $f$ a real valued function in $C^{\infty}(U),$ assume that $\abs{\nabla f}>0$ and for all multi-indices $\alpha$ with $\abs{\alpha}\geq 1,$ $\Psi$ satisfies the following estimates
\[
    \abs{\d^\alpha f}\lesssim \abs{\nabla f},\qquad  \abs{\d^\alpha \Psi}\lesssim \abs{\nabla f}^2.
\]
Then for any integer $k\geq 0$
\begin{equation*}
	\lambda ^k \abs{\int_{\mathbf{\R ^n}} u(\xi)\, e^{i\lambda f(\xi)}\, \dd \xi}\leq C_{k,n,\mathcal{K}} \sum_{\abs{\alpha}\leq k} \int_\mathcal{K} \abs{\d^{\alpha}_{\xi} u(\xi)} \abs{\nabla_{\xi} f(\xi)}^{-k}\, \dd \xi, \quad \lambda>0.
\end{equation*}
\end{lem}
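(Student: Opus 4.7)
\textbf{Proof proposal for Lemma \ref{lem:technic}.}

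The plan is the standard non-stationary phase integration by parts, with care taken to keep track of powers of $|\nabla f|$. Introduce the first-order operator
\[
 Lv := \frac{1}{i\lambda\,|\nabla f|^{2}}\sum_{j=1}^n (\partial_j f)\,\partial_j v,
\]
which satisfies $L\,e^{i\lambda f}=e^{i\lambda f}$. Its formal transpose on functions compactly supported in $\mathcal{K}\subset U$ is
\[
 L^{t} v = -\frac{1}{i\lambda}\sum_{j=1}^n \partial_j\!\left(\frac{\partial_j f}{|\nabla f|^{2}}\, v\right)
         = -\frac{1}{i\lambda}\Bigl(b_0 v+\sum_{j=1}^{n} b_j\,\partial_j v\Bigr),
\]
with $b_j=\partial_j f/|\nabla f|^{2}$ and $b_0=\sum_j \partial_j b_j$. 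Since $\mathrm{supp}\,u\Subset U$, integrating by parts $k$ times gives no boundary terms, so
\[
 \int_{\R^n} u(\xi)\,e^{i\lambda f(\xi)}\,\dd\xi \;=\; \int_{\R^n}\bigl((L^{t})^{k}u\bigr)(\xi)\,e^{i\lambda f(\xi)}\,\dd\xi,
\]
and the entire task reduces to bounding $(L^{t})^{k}u$ pointwise on $\mathcal{K}$.

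The first technical step is to observe that the hypothesis $|\partial^{\alpha} f|\lesssim |\nabla f|$ for $|\alpha|\geq 1$ propagates to $|\nabla f|^{-2}$: by induction on $|\gamma|$ using $\partial_j |\nabla f|^{2}=2\sum_{\ell}(\partial_\ell f)(\partial_j\partial_\ell f)$ one shows
\[
 \bigl|\partial^{\gamma} |\nabla f|^{-2s}\bigr| \;\lesssim\; |\nabla f|^{-2s},\qquad \text{for every }\gamma,\, s\geq 0,
\]
so differentiating the weight $|\nabla f|^{-2}$ does \emph{not} create additional negative powers of $|\nabla f|$. Combined with $|\partial^{\gamma}\partial_j f|\lesssim|\nabla f|$, this yields $|\partial^{\gamma} b_j|\lesssim |\nabla f|^{-1}$ for all $\gamma$ and $j=0,1,\dots,n$.

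The second step is an induction on $k$ proving the representation
\[
 (L^{t})^{k}u \;=\; \frac{(-1)^{k}}{(i\lambda)^{k}}\sum_{|\alpha|\leq k} c_{\alpha}^{(k)}(\xi)\,\partial^{\alpha}u(\xi),
\]
where the coefficients satisfy $|\partial^{\gamma} c_{\alpha}^{(k)}|\lesssim |\nabla f|^{-k}$ for \emph{every} multi-index $\gamma$. The base case $k=0$ is trivial; at the inductive step one applies $L^{t}$ to the previous expression and uses the Leibniz rule, noting that the new coefficients are sums of products $b_{j}\cdot c_{\beta}^{(k-1)}$ and $b_{j}\cdot \partial_{j}c_{\beta}^{(k-1)}$. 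By the bounds on $b_j$ and the inductive stability of coefficients under differentiation, each such product is controlled by $|\nabla f|^{-k}$, and the stability under further derivatives is preserved.

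Combining the two steps gives
\[
 \lambda^{k}\bigl|(L^{t})^{k}u(\xi)\bigr| \;\leq\; C_{k,n}\sum_{|\alpha|\leq k} |\partial^{\alpha}u(\xi)|\,|\nabla f(\xi)|^{-k}
\]
pointwise on $\mathcal{K}$, and integrating over $\mathcal{K}$ yields the claimed inequality. The main obstacle is the stability claim $|\partial^{\gamma} c_{\alpha}^{(k)}|\lesssim|\nabla f|^{-k}$ uniformly in $\gamma$; this is what forces us to observe first that differentiation of the weight $|\nabla f|^{-2}$ is ``harmless'' under the assumption $|\partial^{\alpha}f|\lesssim|\nabla f|$, so that Leibniz expansions during the induction never produce more than $k$ negative powers of $|\nabla f|$.
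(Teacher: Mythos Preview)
Your proof is correct and follows essentially the same route as the paper's: both arguments integrate by parts $k$ times using the operator built from $\nabla f/|\nabla f|^{2}$, first show the key stability $|\partial^{\gamma}(|\nabla f|^{-2})|\lesssim |\nabla f|^{-2}$ by Leibniz on $\Psi\cdot\Psi^{-1}=1$, and then run an induction showing that each application of the transpose produces coefficients controlled by one additional power of $|\nabla f|^{-1}$, stable under further differentiation. The only cosmetic difference is that the paper tracks the full amplitudes $A_k^{j_1,\dots,j_k}$ and proves $|\partial^{\alpha}A_k|\lesssim\sum_{|\beta|\leq|\alpha|+k}|\partial^{\beta}u|\,|\nabla f|^{-k}$, whereas you separate out the coefficient functions $c_{\alpha}^{(k)}$ from the derivatives of $u$; the content is identical.
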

\begin{proof}
    Let $\Psi=\abs{\nabla f}^2$. Let us define $A_0=u$ and
    \[
        A_{k}^{j_1,\ldots,j_k}=\d_{j_l}\brkt{A^{j_1,\ldots,j_{k-1}}_{k-1} \frac{\d_{j_l} f}{\Psi}},
    \]
    for $k\geq 1$, $j_l\in \set{1,\ldots,n}$ for $l\in\{0,\ldots,k\}$.

    We claim that for any multi-index $\alpha$ with $\abs{\alpha}\geq 0$, $\abs{\partial^\alpha\brkt{\Psi^{-1}}}\lesssim \Psi^{-1}$. Using induction on $|\alpha|$, we trivially have $\abs{\partial^0 \Psi}=\abs{\Psi},$ and so as our induction hypothesis, we assume that $\abs{\alpha}\geq 1$ and $\abs{\d^\gamma \Psi^{-1}}\lesssim \Psi^{-1}$ for any multi-index $\gamma$ with $\abs{\gamma}<\abs{\alpha}.$ Since $1=\Psi \Psi^{-1}$ Leibniz rule yields
    \[
        \partial^\alpha\brkt{\Psi^{-1}} \Psi=-\sum_{\beta<\alpha}\binom{\alpha}{\beta}  \partial^\beta\brkt{\Psi^{-1}}  \partial^{\alpha-\beta}\brkt{\Psi},
    \]
   from which, our induction hypothesis and the assumption on $\Psi$ in the statement of the lemma, the claim follows.
    Observe that, for any multi-index $\alpha$, $\abs{\alpha}\geq 0$,
    \[
        \begin{split}
        \d^\alpha \brkt{A_{k}^{j_1,\ldots,j_k}}
        &=\sum \binom{\alpha}{\beta}\binom{\beta}{\gamma} \Bigg( \d^\beta \d_{j_k}A_{k-1}^{j_1,\ldots,j_{k-1}}\, \d^\gamma \d_{j_k}f \, \d^{\alpha-\beta-\gamma}\brkt{\Psi^{-1}}\\
        &+\d^\beta A_{k-1}^{j_1,\ldots,j_{k-1}}\, \d^\gamma \d_{j_k,j_k}^2f \,\d^{\alpha-\beta-\gamma}\brkt{\Psi^{-1}}\\
        &+\d^\beta \d_{j_k} A_{k-1}^{j_1,\ldots,j_{k-1}}\, \d^\gamma \d_{j_k}f \, \d^{\alpha-\beta-\gamma}\d_{j_k}\brkt{\Psi^{-1}}\Bigg).
        \end{split}
    \]
    Proceeding by induction, one can see that for $k\geq 1$ and for any multi-index $\alpha$ with $\abs{\alpha}\geq 0$,
    $A_k^{j_1,\ldots,j_k}\in C^{\infty}_{0}(\mathcal{K})$ and
    \begin{equation}\label{eq:techn}
        \abs{\partial^\alpha A_k^{j_1,\ldots,j_k}}\lesssim \sum_{\abs{\beta}\leq \abs{\alpha}+k} \abs{\d^\beta u} \Psi^{-k/2}.
    \end{equation}
    Since $1=\sum_{j=1}^n \frac{\partial_{j}f}{\Psi} \partial_j f$, and $i \lambda \partial_jf e^{i\lambda f}= \d_{j}\brkt{e^{i\lambda f}}$, integration by parts yields
    \[
        (-i\lambda)^k\int_{\R ^n} u(\xi) e^{i\lambda f(\xi)}\, \dd \xi=\sum_{j_1,\ldots,j_k=1}^n \int_{\mathcal{K}} A^{j_1,\ldots,j_k}_k e^{i\lambda f(\xi)}\, \dd \xi.
    \]
    Then the result follows by taking absolute values of both sides and using \eqref{eq:techn} for $\abs{\alpha}=0$.
\end{proof}
\section{Global $L^q -L^r$ boundedness of rough linear FIOs }\label{linear FIO}
In this section we shall state and prove a boundedness result
concerning certain classes of FIOs with rough amplitudes and smooth strongly non-degenerate phase
functions.

\subsection{Boundedness of the low frequency part of the FIO}\label{subsec low freq}
Using the Seeger-Sogge-Stein decomposition from subsection \ref{sss decomposition}, here we shall establish the boundedness of the low frequency portion of the Fourier integral operator given by

\[T_0 u(x)=\frac{1}{(2\pi)^{n}}\int_{\mathbb{R}^{n}}e^{i
\varphi(x,\xi)}a(x,\xi) \Psi_0(\xi)\hat{u}(\xi)
\, \dd\xi,\]
where $\Psi_0 \in C_{0}^{\infty}$ and is supported near the origin. Clearly, instead of studying $T_0$, we can consider a FIO $T_{a}$ whose amplitude $a(x,\xi)$ is compactly supported in the  frequency variable $\xi.$ In what follows, we shall adopt this and drop the reference to $T_0$.
But before, we proceed with the investigation of the $L^q-L^r$ boundedness, we will need the following lemma.

\begin{lem} \label{lem:fuijiwara}
 Let $\eta(\xi)$ be a $C_{0}^\infty$ function and set
\[
	K(x,z):=\int_{\R^n} \eta(\xi) e^{i(\psi(x,\xi)+\pr{z}{\xi})}\, \dd \xi,
\]
where $\psi(x,\xi)\in \Phi^1 .$
Then, for any $\alpha\in (0,1)$, there exists a positive constant $c$ such that
\[
	\abs{K(x,z)}\leq c (1+\abs{z})^{-n-\alpha}.
\]
\end{lem}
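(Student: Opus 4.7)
The strategy is to exploit the oscillation of $e^{i\langle z,\xi\rangle}$ via integration by parts, and to handle the singularities of higher $\xi$-derivatives of $\psi$ at the origin by a dyadic decomposition. First, for $|z|\leq 2M$ where $M:=\sup|\nabla_\xi\psi|$ (finite because $\psi\in\Phi^1$), the estimate is immediate from $|K(x,z)|\leq\|\eta\|_{L^1}$ since $(1+|z|)^{-n-\alpha}\gtrsim 1$ on this range. For $|z|>2M$, the total phase $\Phi(\xi):=\psi(x,\xi)+\langle z,\xi\rangle$ satisfies $|\nabla_\xi\Phi|\geq |z|/2$, so the oscillation should yield polynomial decay; the obstacle is that the $\Phi^1$ bounds $|\partial_\xi^\alpha\psi|\lesssim|\xi|^{1-|\alpha|}$ become singular at $\xi=0$ for $|\alpha|\geq 2$, which limits naive integration by parts.

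I would first integrate by parts $n$ times in the single direction $\widehat{z}:=z/|z|$, so as to move only one derivative per step onto $\eta\,e^{i\psi}$. This gives
\[
 K(x,z)=\frac{(-1)^n}{(i|z|)^n}\int_{\R^n}F_n(\xi)\,e^{i\langle z,\xi\rangle}\,\dd\xi,\qquad F_n(\xi):=\partial_{\widehat z}^{\,n}\bigl(\eta(\xi)\,e^{i\psi(x,\xi)}\bigr).
\]
The Leibniz rule together with the $\Phi^1$ estimate yield $|F_n(\xi)|\lesssim|\xi|^{1-n}\mathbf{1}_{\supp\eta}(\xi)$, which is integrable on $\R^n$; hence $|\widehat{F_n}(z)|\leq\|F_n\|_{L^1}<\infty$, giving the preliminary bound $|K|\lesssim|z|^{-n}$. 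This is short by a power of $|z|$; the remaining decay has to come from a finer analysis of $\widehat{F_n}(z)$.

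To extract that extra decay, I decompose $F_n = F_n(1-\chi_0)+\sum_{k\geq 1}F_n\phi_k$, where $\chi_0\in C^\infty_0$ equals $1$ near the origin and $\phi_k$ is supported in the dyadic shell $\{|\xi|\sim 2^{-k}\}$. The piece $F_n(1-\chi_0)$ is $C^\infty_0$ and supported away from $0$, so its Fourier transform has arbitrary polynomial decay via Lemma~\ref{lem:technic}. On each dyadic shell I would rescale $\xi=2^{-k}\omega$, which by the homogeneity of $\psi$ converts the integral into one with large parameter $2^{-k}|z|$ and amplitudes uniformly bounded on the unit annulus; applying Lemma~\ref{lem:technic} with $\lambda=2^{-k}$ and $f(\omega)=\psi(x,\omega)+\langle z,\omega\rangle$ then yields, for every integer $M\geq 0$,
\[
    |\widehat{F_n\phi_k}(z)|\lesssim_M |z|^{-M}\,2^{k(M-1)}.
\]
Choosing $M=2$ on the range $k\leq\log_2|z|$, and using the trivial bound $|\widehat{F_n\phi_k}|\lesssim 2^{-k}$ on $k>\log_2|z|$, the two dyadic sums are each geometric and produce $|\widehat{F_n}(z)|\lesssim|z|^{-1}$. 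Combined with the prefactor $|z|^{-n}$ this gives $|K(x,z)|\lesssim|z|^{-(n+1)}\leq (1+|z|)^{-n-\alpha}$ for any $\alpha\in(0,1)$.

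The main obstacle I anticipate is verifying that Lemma~\ref{lem:technic} applies uniformly in $k$ on each dyadic shell: after rescaling to the unit annulus, one must check that the hypotheses $|\partial^\alpha f|\lesssim|\nabla f|$ and $|\partial^\alpha(|\nabla f|^2)|\lesssim|\nabla f|^2$ hold with constants independent of $k$ and $z$. This reduces to the $\Phi^1$ derivative bounds on the unit annulus, where they are indeed uniform, so the obstruction amounts to careful bookkeeping once the rescaling/dyadic framework is in place.
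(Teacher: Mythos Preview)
Your argument is correct and in fact yields the sharper bound $(1+|z|)^{-n-1}$, exceeding the $(1+|z|)^{-n-\alpha}$ with $\alpha<1$ stated in the lemma. The paper's own proof simply invokes Lemma~1.2.10 of \cite{DS} and gives no further details, so your route is considerably more explicit: an $n$-fold directional integration by parts against $e^{i\langle z,\xi\rangle}$, then a dyadic decomposition near the origin to tame the singularity of $\partial_\xi^\alpha\psi$ there, with each rescaled shell handled by the non-stationary phase Lemma~\ref{lem:technic}. This has the advantage of being self-contained within the paper and of making transparent exactly which features of the class $\Phi^1$ (homogeneity plus the bounds $|\partial_\xi^\alpha\psi|\lesssim|\xi|^{1-|\alpha|}$) are actually used.

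Two minor points deserve a sentence of justification when you write this up. First, the $n$-fold integration by parts producing $F_n=\partial_{\hat z}^{\,n}(\eta\,e^{i\psi})$ is not immediate, since $e^{i\psi}$ need not be $C^1$ at $\xi=0$ when $\psi$ is merely homogeneous of degree one; you should insert a cutoff $\chi_\varepsilon$ vanishing on $B_\varepsilon(0)$, integrate by parts on the resulting $C^\infty_0$ integrand, and check that the terms where a derivative hits $\chi_\varepsilon$ are $O(\varepsilon)$ (this follows from the same $|\xi|^{1-k}$ bounds you already invoke). Second, your phrase ``large parameter $2^{-k}|z|$'' followed by ``$\lambda=2^{-k}$'' is a slight slip of wording: you are applying Lemma~\ref{lem:technic} with $\lambda=2^{-k}$ and $|\nabla f|\sim|z|$, so the effective gain $\lambda^{-M}|\nabla f|^{-M}=(2^{-k}|z|)^{-M}$ is indeed governed by $2^{-k}|z|$, but the lemma is invoked with $\lambda=2^{-k}$. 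Neither point affects the validity of the strategy.
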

\begin{proof}
The proof is a straightforward application of Lemma $1.2.10$ in \cite{DS}.
\end{proof}

\begin{thm}
\label{Linearlow}
Let $T_a$ be a FIO given by \eqref{defn basic linear FIO}, with a phase function $\varphi (x,\xi)\in \Phi^2 $ satisfying SND, and with a symbol
$a(x,\xi)\in L^pS^m_\varrho$ such that $\supp_{\xi}a (x,\xi)$ is compact. Suppose that $0<r\leq \infty$, $1\leq p,q\leq \infty$ satisfy the relation $1/r = 1/q + 1/p$.
Then the operator $T_a$ is bounded from $L^q$ to $L^r$ with norm bounded by a constant depending only on $n$, $m$, $p$, $q$, and a finite number of $C_\alpha$'s in Definition $\ref{LpSmrho definition}$.
\end{thm}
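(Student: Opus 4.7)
My plan is to realise the low-frequency FIO as an integral operator whose Schwartz kernel factors, up to decay, as an $L^p$ function of $x$ times a convolution-type kernel in the new variable $\Theta(x)$ supplied by the SND hypothesis; the boundedness then follows by combining Young's inequality in the $\Theta$-variable with H\"older.

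To start, I would apply Lemma \ref{lem:change} to write $T_a$ as a finite sum of operators with phases of the form $\psi(x,\xi)+\pr{\Theta(x)}{\xi}$, where $\Theta(x):=\nabla_\xi\varphi(x,\zeta)$ for some $\zeta\in\mathbb{S}^{n-1}$, $\psi\in\Phi^1$, and with amplitudes in the same rough class $L^pS^m_\varrho$, each localised in $\xi$ to a narrow cone around $\zeta$ and still compactly supported in $\xi$. For a single such piece the Schwartz kernel is
\begin{equation*}
    K(x,y)=\frac{1}{(2\pi)^n}\int a(x,\xi)\,e^{i\psi(x,\xi)+i\pr{\Theta(x)-y}{\xi}}\,\dd\xi .
\end{equation*}
Freezing $x$ and applying Lemma \ref{lem:fuijiwara} with $\eta=a(x,\cdot)\in C^\infty_0$ and $z=\Theta(x)-y$ gives, for any $\alpha\in(0,1)$,
\begin{equation*}
    \abs{K(x,y)}\leq c(x)(1+\abs{\Theta(x)-y})^{-n-\alpha},
\end{equation*}
where $c(x)$ is a finite linear combination of the quantities $\sup_\xi\abs{\partial_\xi^\gamma a(x,\xi)}$; the implicit constants depend only on finitely many $\Phi^1$-seminorms of $\psi$, which are uniform in $x$ by the $\Phi^2$-bounds on $\varphi$. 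Since $a$ is compactly supported in $\xi$, a Sobolev embedding in $\xi$ on this compact set dominates each $\sup_\xi\abs{\partial_\xi^\gamma a(x,\xi)}$ by $L^1$-in-$\xi$ norms of slightly higher $\xi$-derivatives of $a$, and Minkowski's inequality then upgrades the hypothesis $\norm{\partial_\xi^\gamma a(\cdot,\xi)}_{L^p}\leq C_\gamma$ into $\norm{c}_{L^p}\leq C\abs{a}_{p,m,s}$ for a suitable $s$.

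Next, the SND condition combined with $\abs{\partial_x\partial_\xi\varphi}\leq C$ (from $\varphi\in\Phi^2$) forces $D_x\Theta$ to have bounded norm and bounded inverse, so the global inverse function theorem of Hadamard makes $\Theta:\R^n\to\R^n$ a bi-Lipschitz diffeomorphism with uniform constants. Setting $h(z):=(1+\abs{z})^{-n-\alpha}\in L^1(\R^n)$, the auxiliary operator
\begin{equation*}
    \mathcal{S}u(x):=\int(1+\abs{\Theta(x)-y})^{-n-\alpha}\abs{u(y)}\,\dd y=(h\ast \abs{u})(\Theta(x))
\end{equation*}
is bounded from $L^q$ to $L^q$ for every $1\leq q\leq\infty$ by Young's inequality followed by the change of variable $u=\Theta(x)$ with bounded Jacobian. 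Combining the pointwise bound $\abs{T_au(x)}\leq c(x)\,\mathcal{S}u(x)$ with H\"older's inequality (which remains valid in the quasi-Banach range $0<r<1$) yields
\begin{equation*}
    \norm{T_a u}_{L^r}\leq\norm{c}_{L^p}\norm{\mathcal{S}u}_{L^q}\leq C\norm{u}_{L^q},
\end{equation*}
with $C$ depending only on $n,m,p,q$ and finitely many $C_\alpha$ from Definition \ref{LpSmrho definition}.

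The main technical obstacle I anticipate is the step converting the hypothesis which controls $\partial_\xi^\gamma a(\cdot,\xi)$ in $L^p_x$ pointwise in $\xi$ into an $L^p_x$ bound on the $\sup_\xi$-quantity $c(x)$ appearing in the kernel estimate. This conversion has no analogue for smooth amplitudes and is available here precisely because of the compactness of the $\xi$-support, which allows a $\xi$-Sobolev embedding to trade a supremum in $\xi$ for an integral of higher $\xi$-derivatives; once $c\in L^p$ is secured, the remainder of the argument is essentially the smooth-amplitude global kernel estimate.
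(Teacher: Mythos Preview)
Your argument is correct, but it proceeds by a genuinely different route from the paper's. The paper first expands the amplitude in a Fourier series in $\xi$ on a cube containing its support, obtaining
\[
T_a u(x)=\sum_{k\in\Z^n} a_k(x)\,T_\eta(u_k)(x),
\]
where $\eta\in C^\infty_0$ is a fixed cutoff, $u_k$ is a translate of $u$, and the coefficients satisfy $\norm{a_k}_{L^p}\lesssim(1+|k|)^{-N}$ by integration by parts. Thus the rough $x$-dependence is completely decoupled from the oscillatory integral, and Lemma~\ref{lem:fuijiwara} is applied once to the \emph{fixed} amplitude $\eta$ to show that $T_\eta$ is bounded on $L^q$ via Schur's lemma; the result then follows by H\"older (or its quasi-Banach substitute) and summing in $k$. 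By contrast, you keep the $x$-dependent amplitude inside the kernel, apply Lemma~\ref{lem:fuijiwara} pointwise in $x$, and use a Sobolev embedding in $\xi$ together with Minkowski to convert the hypothesis $\norm{\partial_\xi^\gamma a(\cdot,\xi)}_{L^p}\leq C_\gamma$ (pointwise in $\xi$) into $\norm{c}_{L^p}<\infty$ for the resulting sup-in-$\xi$ constant $c(x)$. Your argument is more direct and avoids the Fourier series machinery, at the price of requiring a quantitative form of Lemma~\ref{lem:fuijiwara} (the constant controlled by finitely many $C^k$ seminorms of the amplitude on a fixed compact set), which is indeed what the underlying non-stationary phase estimate gives. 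The paper's decoupling approach, on the other hand, needs no such tracking of constants and has the further advantage that the same Fourier-series idea is reused in the multilinear section (Theorem~\ref{Thm:generalmultilinear}).
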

\begin{proof}
Consider a closed cube $Q$ of side-length $L$ such that $\supp_{\xi} a(x,\xi) \subset \text{Int}(Q).$ We extend $a(x,\cdot)|_{Q}$ periodically with period $L$ into $\widetilde{a}(x,\xi)\in C^{\infty}(\R^{n}_\xi).$ Let $\eta (\xi)$ be in $C^{\infty}_{0}$ with $\supp \eta \subset Q$ and $\eta=1$ on $\xi$-support of $a(x,\xi)$. Clearly, we have $a(x,\xi)=\widetilde{a}(x,\xi) \eta(\xi)$. Now if we expand $\widetilde{a}(x,\xi) $ in a Fourier series, then setting $u_k(x)=u(x-\frac{2\pi k}{L})$ for any $k\in \Z^n ,$ we can write the FIO $T_a$ as
\begin{equation}\label{eq:Fourier_Serie}
	T_a u(x)=\sum_{k\in \Z^n} a_k(x) T_\eta (u_k)(x),
\end{equation}
where
\[
	a_k(x)=\frac{1}{L^n}\int_{\R^n} a(x,\xi) e^{-i \frac{2\pi}{L}\pr{k}{\xi}}\, \dd \xi,
\]
and $T_\eta(v)(x):=\frac{1}{(2\pi)^n}\int \eta(\xi) e^{i \varphi(x,\xi)}\widehat{v}(\xi)\, \dd \xi.$
Let us assume for a moment that $T_\eta$ is a bounded operator on $L^q$. Then integration by parts yields
\[
	a_k(x)= \frac{c_{n,N}}{|k_{l}|^N}\int_{\R^n} \partial^N_{\xi_l} a(x,\xi) e^{-i  \frac{2\pi}{L}\pr{k}{\xi}}\, \dd \xi.
\]
Observe also that, by the hypothesis on the symbol and Lemma \ref{lem:calculus_lemma},
\begin{equation*}
	\max_{s=0,\ldots, N}\int_{\R^n} \norm{\partial^s_{\xi_{l}} a(\cdot,\xi)}_{L^p}\, \dd\xi\leq c_{n, N,\varrho}\abs{a}_{p,m,N}.
\end{equation*}
Thus 
\begin{equation}\label{estimak}
	\norm{a_k}_{L^p}\leq c_{n, N,\varrho}\abs{a}_{p,m,N} (1+\abs{k})^{-N}.
\end{equation}
Let us first assume that $r\geq 1$. Then by the Minkowski and H\"older inequalities,

\begin{equation}\label{eq:bound}
\begin{split}
	\norm{T_a u}_{L^r}\leq  \sum_{k\in \Z^n}  \norm{a_k T_{\eta}(u_k)}_{L^r}
\leq  \sum_{k\in \Z^n} \norm{a_k}_{L^p} \norm{T_{\eta}(u_k)}_{L^{q}}.
\end{split}
\end{equation}

On the other hand, since we have assumed that $T_\eta$ is bounded on $L^q$ and the translations are isometries on $L^q$, we have that $\norm{T_\eta (u_k)}_{L^q}\leq  c_{\eta,\varphi} \norm{u}_{L^q}$. Therefore using \eqref{estimak}
\[
	\norm{T_a u}_{L^r}\lesssim \abs{a}_{p,m,N} \sum_{k\in \Z^n} (1+|k|)^{-N} \norm{u}_{L^q}.
\]
Then selecting $N=n+1$ we conclude the proof.

Assume now that $0<r<1$. Using \eqref{eq:Fourier_Serie} and H\"older's inequality, with exponents $p/r$ and $q/r$,  we have 
\[
	\int \abs{T_a u(x)}^r \, \dd x
	\leq \sum_{k\in \Z^n}  \int \abs{T_\eta (u_k)(x)}^r\abs{a_k(x)}^r\, \dd x\leq 	\sum_{k\in \Z^n}\norm{a_k}_{L^p}^r \norm{T_{\eta}(u_k)}_{L^{q}}^r.
\]
The boundedness assumption on $T_\eta$ and \eqref{estimak} yields
\[
  \int \abs{T_a u(x)}^r \, \dd x 	\lesssim \abs{a}_{p,m,N}^r \sum_{k\in \Z^n} (1+|k|)^{-Nr} \norm{u}_{L^q}^r.
\]
Then, selecting $N=[n/r]+1$, we obtain the result.

In order to finish the proof we have to show that $T_\eta$ defines a bounded operator on $L^q$, for $1\leq q\leq \infty$. By Lemma \ref{lem:change} we can assume without loss of generality that
\[
	\varphi(x,\xi)=\psi(x,\xi) + \langle \textbf{t}(x), \xi\rangle,
\]
with a smooth map $\textbf{t}: \R^{n}\to \R^n ,$ stratifying $\abs{{\rm det}\, D \textbf{t}(x)}\geq c>0$ as a direct consequence of our SND assumption on the phase function $\varphi,$ and $\psi(x,\xi)\in \Phi^1.$  Furthermore, it follows from Schwartz's global inverse function theorem (see \cite[Theorem 1.22]{MR0433481}), that the map $x\mapsto \mathbf{t}(x)$ is a global diffeomorphism on $\R^n$.

For $v\in \S$ one has
\begin{equation}\label{eq:kernel_def}
	T_\eta(v)(x)=\frac{1}{(2\pi)^n}\int \eta(\xi) e^{i \pr{\xi}{\mathbf{t}(x)}} e^{i \psi(x,\xi)}\widehat{v}(\xi)\, \dd \xi=\int  K(x,\mathbf{t}(x)-y) v(y)\, \dd y,
\end{equation}
with
\begin{equation}\label{low frequency kernel estim}
	K(x,z)=\frac{1}{(2\pi)^n}\int \eta(\xi) e^{i \pr{\xi}{z}} e^{i \psi(x,\xi)}\, \dd \xi.
\end{equation}
Now, it follows from \ref{lem:fuijiwara} that for any  $\alpha\in (0,1)$, there exists a constant $c$ such that
\[
	\abs{K(x,z)}\leq c (1+\abs{z})^{-n-\alpha},
\]
and therefore $\sup_{x} \int | K(x,\mathbf{t}(x)-y) |\,dy <\infty.$ This yields at once the boundedness of the operator $T_\eta$ on $L^{\infty}.$  Moreover using the change of variables $z=\mathbf{t}(x),$ we observe that the determinant of its Jacobian, denoted by $\det \,J(z),$ is bounded from above by $\frac{1}{c},$ because of the SND condition $\abs{\det\, D \mathbf{t}(x)}\geq c>0.$ Therefore
\[
    \begin{split}
	\sup_{y}\int  | K(x,\mathbf{t}(x)-y) |\, \dd x &=\sup_{y}\int |K(\mathbf{t}^{-1}(z),z-y) |\det\,J(z)|\, \dd z  \\
        &\leq \frac{1}{c} \sup_{y}\int (1+\abs{z-y})^{-n-\alpha}\, dz < \infty,
    \end{split}
\]
where we have also used \eqref{low frequency kernel estim}. Therefore Schur's lemma yields that $T_\eta$ is bounded on $L^q$ for all $q\in [1,\infty]$ and this ends the proof of the theorem.
\end{proof}

\subsection{Boundedness of the high frequency part of the FIO}\label{subsec high freq boundedness}
The Seeger-Sogge-Stein decomposition, yields a decomposition of the FIO into low and high frequency parts. In subsection \ref{subsec low freq} we established the $L^q -L^r$ boundedness of linear low frequency rough FIOs and therefore the remaining part of the boundedness problem is the treatment of the high frequency part.

\begin{thm}\label{thm:Linear}
 Suppose that $0<r\leq \infty$, $1\leq p,q \leq \infty$, satisfy the relation $1/r = 1/q + 1/p$. Let $a \in L^p S^m_\varrho$, $\varphi\in \Phi^2$ satisfying the SND condition, $\varrho \leq 1$, $s:= \min(2,p,q),$ $\frac{1}{s}+\frac{1}{s'}=1$ and
\begin{equation*}
	m < -\frac{(n-1)}{2}\brkt{\frac{1}{s}+\frac{1}{\min(p,s')}} + \frac{n(\varrho-1)}{s}.
\end{equation*}
Then the operator $T_a$ is bounded from $L^q$ to $L^r$ and its norm is bounded by a constant $C$, depending only on $n$, $m$, $\varrho$, $p$, $q$, and a finite number of $C_\alpha$'s in Definition $\ref{LpSmrho definition}.$
\end{thm}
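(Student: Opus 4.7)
The plan is to reduce to the high-frequency part (the low-frequency part is already handled by Theorem \ref{Linearlow}) and then combine the Seeger-Sogge-Stein angular decomposition of subsection \ref{sss decomposition} with the Fourier-series technique used in the proof of Theorem \ref{Linearlow}. First I insert a Littlewood-Paley partition $\Psi_0+\sum_{j\geq 1}\Psi_j=1$ in the frequency variable; the $\Psi_0$-piece is estimated by Theorem \ref{Linearlow}, so it suffices to bound $T_a^{\rm high}=\sum_{j\geq 1} T_{a,j}$, where $T_{a,j}$ has amplitude $a(x,\xi)\Psi_j(\xi)$ supported in $\{|\xi|\sim 2^j\}$. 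For fixed $j$ I apply the angular decomposition, writing $T_{a,j}=\sum_\nu T_j^\nu$ with roughly $2^{j(n-1)/2}$ terms, each with amplitude supported in the cone $\Gamma_j^\nu$ of angular aperture $2^{-j/2}$. Via Lemma \ref{lem:change} combined with the Seeger-Sogge-Stein substitution $\Phi(x,\xi)=\varphi(x,\xi)-\langle\nabla_\xi\varphi(x,\xi_j^\nu),\xi\rangle$, each $T_j^\nu$ is put in the linear-phase form \eqref{defn rewrittentnuj} with amplitude $A_j^\nu(x,\xi)$ satisfying the estimates coming from \eqref{phaseestim1}--\eqref{phaseestim2}.

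Next, as in the proof of Theorem \ref{Linearlow}, I would expand $A_j^\nu(x,\cdot)$ in a Fourier series on an anisotropic cube $Q_j^\nu$ containing its $\xi$-support (of side lengths $2^j$ along $\xi_1$ and $2^{j/2}$ along $\xi'$), obtaining
\[
    T_j^\nu u(x)=\sum_{k\in\Z^n} a_{j,k}^\nu(x)\, S_j^\nu\brkt{\tau_{j,k}^\nu u}(x),
\]
where $a_{j,k}^\nu$ are the Fourier coefficients, $\tau_{j,k}^\nu$ are translations dictated by $Q_j^\nu$, and $S_j^\nu$ is an $x$-independent, linear-phase operator with a fixed smooth frequency cutoff. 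Integration by parts in $\xi$, together with the $L^pS^m_\varrho$-estimates on $a$ and \eqref{phaseestim1}--\eqref{phaseestim2}, would yield $\norm{a_{j,k}^\nu}_{L^p}\lesssim 2^{jm}\,2^{jn(\varrho-1)/s}\,(1+|k|)^{-N}$ for any $N$, where the $\varrho$-dependent factor reflects the $L^p$-volume gain inside $Q_j^\nu$ produced by the roughness of $a$ in $x$.

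The heart of the argument is then to obtain a sharp $L^s\to L^s$ bound for $\sum_\nu S_j^\nu$ at $s=\min(2,p,q)$, together with endpoint bounds. For $s=2$ I would use Plancherel together with the Cotlar-Stein almost-orthogonality of the angular pieces after rotating coordinates so that $\xi_j^\nu$ points along $\xi_1$; the global diffeomorphism property of $x\mapsto\nabla_\xi\varphi(x,\xi_j^\nu)$ afforded by the SND condition plays exactly the role it played in Theorem \ref{Linearlow}. For the endpoints $s=1$ and $s=\infty$ I would use Lemma \ref{lem:technic} applied to the kernel of $S_j^\nu$, combined with \eqref{phaseestim1}--\eqref{phaseestim2}, to obtain pointwise kernel bounds giving the crude loss $2^{j(n-1)/2}$. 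Riesz-Thorin interpolation (and its Lorentz analogue for the quasi-Banach range $r<1$) then produces the bound
\[
    \norm{\sum_\nu S_j^\nu}_{L^s\to L^s}\lesssim 2^{j(n-1)/2\cdot(\tfrac{1}{s}+\tfrac{1}{\min(p,s')})}.
\]
Combining this with H\"older, summing the series in $k$ (convergent after choosing $N$ large) and then in $j$, the hypothesis on $m$ in the statement makes the $j$-series a summable geometric series.

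The chief obstacle is obtaining the correct interpolation exponent $\tfrac{1}{s}+\tfrac{1}{\min(p,s')}$ in the $L^s\to L^s$ estimate for $\sum_\nu S_j^\nu$ under the rough-amplitude hypothesis, since the classical proof that differentiates the amplitude in the spatial variable is unavailable when $a(x,\xi)$ is merely $L^p$ in $x$. The Fourier-series step circumvents this difficulty by pushing all $x$-dependence into the coefficients $a_{j,k}^\nu$, after which the $x$-independent operator $S_j^\nu$ can be analysed by classical oscillatory-integral methods; but one has to verify carefully that the translations $\tau_{j,k}^\nu$ and the rotated coordinate system do not destroy the almost-orthogonality of the angular pieces, and that the $L^p$-estimates on $a_{j,k}^\nu$ carry the correct $j$- and $\varrho$-dependence to match the final exponent in $m$.
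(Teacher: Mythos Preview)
Your outline diverges from the paper's argument and, as written, contains a genuine gap that prevents it from reaching the stated exponent on $m$.

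The first problem is the Fourier-coefficient estimate $\norm{a_{j,k}^\nu}_{L^p}\lesssim 2^{jm}\,2^{jn(\varrho-1)/s}\,(1+|k|)^{-N}$. On the anisotropic cube $Q_j^\nu$ (sidelength $2^j$ along $\xi_1$, $2^{j/2}$ along $\xi'$), each integration by parts in $\xi_1$ produces a factor $2^{j}/|k_1|$ together with a $\xi_1$-derivative that, falling on $a$, costs $2^{-j\varrho}$; the net is $2^{j(1-\varrho)}/|k_1|$. Transverse integrations similarly cost $2^{j(1/2-\varrho)}/|k_l|$. Hence obtaining $(1+|k|)^{-N}$ forces a loss of $2^{jN(1-\varrho)}$ when $\varrho<1$, not a gain; the factor $2^{jn(\varrho-1)/s}$ has no mechanism to appear (there is no ``$L^p$-volume gain'' in $\xi$-space coming from roughness in $x$). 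The second problem is that once all $x$-dependence has been pushed into $a_{j,k}^\nu$, the remaining operator $S_j^\nu$ is $x$-independent, so its $L^s\to L^s$ norm cannot depend on $p$; the claimed bound $2^{j(n-1)/2\,(1/s+1/\min(p,s'))}$ therefore cannot be produced by interpolation of $S_j^\nu$ alone. With neither of the two building blocks carrying the $p$- and $\varrho$-dependence correctly, the final sum in $j$ does not converge under the hypothesis on $m$.

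For comparison, the paper does \emph{not} separate the $x$-dependence via a Fourier series at this stage. It writes $T_j^\nu u(x)=\int K_j^\nu(x,\mathbf t_j^\nu(x)-y)\,u(y)\,dy$, splits the $y$-integral at an anisotropic threshold $g(y)\lessgtr 2^{-j\varrho}$, and applies H\"older \emph{simultaneously in $\nu$ and $y$} with exponents $(s,s')$, followed by Hausdorff--Young on $K_j^\nu$ (valid since $s\le 2$). The $\varrho$-dependence enters through the volume of $\{g(y)\le 2^{-j\varrho}\}$ and through the action of the anisotropic operator $L=I-2^{2j}\partial_{\xi_1}^2-2^{j}\Delta_{\xi'}$ on $A_j^\nu$, while the factor $1/\min(p,s')$ appears naturally when one bounds $\bigl(\sum_\nu (\int|A_j^\nu|^s\,d\xi)^{s'/s}\bigr)^{p/s'}$ in $L^1_x$: if $p\ge s'$ Minkowski applies and gives $1/s'$, whereas if $p<s'$ one uses $\ell^{s'}\hookrightarrow \ell^{p}$ and gets $1/p$. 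This is precisely the step your scheme lacks, and it is where the correct exponent $\tfrac{1}{s}+\tfrac{1}{\min(p,s')}$ comes from.
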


\begin{proof}
We shall assume that $q<\infty$. The case $q=\infty$ is proved with minor modifications in the argument, so we omit the details. We would like to prove that there exists a constant $C$, depending only on $n$, $m$, $\varrho$, $p$, $q$ and a finite number of $C_\alpha$'s in Definition \ref{LpSmrho definition}, such that
\[
\|T_a u\|_{L^r (\R^n)} \leq C\|u\|_{L^q (\R^n)},
\]
for all $u\in \S$.
To achieve this, we decompose $T_a$ as in \eqref{Tdecomp}. By Theorem \ref{Linearlow}, the first term $T_0$, satisfies the desired boundedness, so as mentioned above, we confine ourselves to the analysis of the second term $ \sum_{j=1}^{\infty}\sum_{\nu}T^{\nu}_{j} u(x)$ in
\eqref{Tdecomp}. Here we use the representation \eqref{defn rewrittentnuj} of the operators $T^{\nu}_{j}$ namely
\begin{equation*}
 T^{\nu}_{j} u(x)
=\frac{1}{(2\pi)^{n}}\int_{\mathbb{R}^{n}}A^{\nu}_{j}(x,\xi) e^{i
\langle (\nabla_{\xi}\varphi)(x,\xi^{\nu}_{j}),\, \xi\rangle} \hat{u}(\xi)\, \dd\xi.
\end{equation*}
This can be rewritten as
\begin{equation*}
 T^{\nu}_{j} u(x)
=\int_{\mathbb{R}^{n}} K^{\nu}_{j} (x,(\nabla_{\xi}\varphi)(x,\xi^{\nu}_{j})-y) u(y) dy
\end{equation*}
with
\begin{equation*}
K^{\nu}_{j} (x,z)
=\frac{1}{(2\pi)^{n}}\int_{\mathbb{R}^{n}}A^{\nu}_{j}(x,\xi) e^{i
\langle z,\, \xi\rangle}\, \dd\xi.
\end{equation*}
Let $L$ be the differential operator given by
\begin{equation*}
L=I-2^{2j}\frac{\partial ^{2}}{\partial \xi_1 ^{2}}-2^{j}
\Delta_{\xi'}.
\end{equation*}
Using the definition of $A^{\nu}_{j} (x,\xi)$ in \eqref{amplitude}, the assumption that $a\in L^{p}S^{m}_{\varrho}$ together with \eqref{chiestimate 1}, \eqref{chiestimate 2}, and the uniform estimates (in $x$) for $\Phi(x,\xi)$ in \eqref{phaseestim1} and \eqref{phaseestim2}, we can show that for any $\nu$ and any $\xi\in \sup_\xi {A^\nu_j}$
\begin{equation*}
\Vert L^N A^{\nu}_{j}(\cdot,\xi)\Vert_{L^{p}}\leq C_{N} 2^{j(m+ 2N(1-\varrho))}.
\end{equation*}
Let ${\mathbf t}_j^\nu(x)=(\nabla_{\xi}\varphi)(x,\xi^{\nu}_{j})$ and $\alpha\in (0,\infty)$. As before, the SND condition on the phase function yields that $|\det D {\mathbf t}_j^\nu(x)|\geq c>0.$ Setting
\begin{equation*}
g(y):= (2^{2j} y^2_1 + 2^{j} |y'|^2)^{\frac{\alpha}{2}}
\end{equation*}
we can split
\[
\begin{split}
\textbf{I}_1 + \textbf{I}_2&:=\sum_{\nu}\left(\int_{g(y)\leq 2^{-j\varrho}} +\int_{g(y)> 2^{-j\varrho}}\right)\vert K_{j}^{\nu}(x,y)u({\mathbf t}_j^\nu(x)-y) \vert\, \dd y\\
&=\sum_{\nu}\int\vert K_{j}^{\nu}(x,y)u({\mathbf t}_j^\nu(x)-y)\vert\, \dd y.
\end{split}
\]
H\"older's inequality in $\nu$ and $y$ simultaneously and thereafter, since $1\leq s\leq 2$, the Hausdorff-Young inequality in the $y$ variable of the second integral yields
\begin{equation*}
\begin{split}
	\textbf{I}_1 &\leq \{\sum_{\nu}\int_{g(y)\leq 2^{-j\varrho}} \abs{u({\mathbf t}_j^\nu(x)-y)}^s \dd y\}^{\frac{1}{s}}\{\sum_{\nu}\int\vert K_{j}^{\nu}(x,y)\vert^{s'} \dd y\}^{\frac{1}{s'}}\\
&\lesssim  \{\sum_{\nu}\int_{g(y)\leq 2^{-j\varrho}} \abs{u({\mathbf t}_j^\nu(x)-y)}^s \dd y\}^{\frac{1}{s}}\{\sum_{\nu}\brkt{\int\vert A_{j}^{\nu}(x,\xi)\vert^{s}\, \dd\xi}^{\frac{s'}{s}}\}^{\frac{1}{s'}}.
\end{split}
\end{equation*}
If we now set $U_{j}^\nu(x,y):=u({\mathbf t}_j^\nu(x)-y),$ raise the expression in the estimate of $\textbf{I}_1$ to the $r$-th power and integrate in $x$, then H\"older's inequality yields that $\norm{\textbf{I}_1}_{L^r}$ is bounded a constant times
\begin{equation}\label{I}
\set{\int \brkt{\sum_{\nu}\int_{g(y)\leq 2^{-j\varrho}} \abs{U^{\nu}_j(x,y)}^s \dd y}^{\frac{q}{s}} \dd x}^{\frac{1}{q}}
\set{ \int \brkt{\sum_{\nu}\brkt{\int\vert A_{j}^{\nu}(x,\xi)\vert^{s}\, \dd\xi}^{\frac{s'}{s}}}^{\frac{p}{s'}} \dd x}^{\frac{1}{p}}.
\end{equation}
We shall deal with the two terms in the right hand side of this estimate separately. To this end using the Minkowski integral inequality (simultaneously in $y$ and $\nu$), we can see that the first term is bounded by
\[\set{\sum_{\nu}\int_{g(y)\leq 2^{-j\varrho}}  \brkt{\int \abs{U^{\nu}_j(x,y)}^q \dd x}^{\frac{s}{q}} \dd y}^{\frac{1}{s}}.\]
Observe now that, letting ${\mathbf t}_j^\nu(x)=t$ and using $\abs{\det D\,{\mathbf t}_j^\nu(x)}\geq c>0$, we get
\begin{equation}\label{eq:u}
	\brkt{\int \abs{U^{\nu}_j(x,y)}^q \dd x}^{\frac{1}{q}}=\brkt{\int \abs{u(t-y)}^q \abs{\det D\,{\mathbf t}_j^\nu(x)}^{-1} \dd t}^{\frac{1}{q}}\leq c^{-\frac{1}{q}} \norm{u}_{L^q}.
\end{equation}
Thus, the first term on the right hand side of \eqref{I} is bounded by a constant multiple of
\begin{equation} \label{first term of I}
\begin{split}
	\set{\sum_\nu \int_{g(y)\leq 2^{-j\varrho}} \dd y}^{\frac{1}{s}}  \norm{u}_{L^q}&\lesssim
	2^{j\frac{n-1}{2s}} 2^{-j\frac{n+1}{2s}}  \set{\int_{\abs{y}\leq 2^{-j\frac{\varrho}{\alpha}}} \dd y}^{\frac{1}{s}} c \norm{u}_{L^q}\\&\lesssim 2^{j\frac{n-1}{2s}} 2^{-j\frac{n+1}{2s}}  2^{-j\frac{n}{\alpha s}} \norm{u}_{L^q}.
\end{split}
\end{equation}
To analyse the second term we shall consider two separate cases, so assume first that $p\geq s'.$ Minkowski inequality yields that the second term in the right hand side of \eqref{I} is bounded by
\begin{equation*}
\begin{split}
&\set{\sum_{\nu} \sqrbrak{\int \brkt{\int |A^{\nu}_{j} (x,\xi)|^{s}\dd \xi}^{\frac{p}{s}}\dd x}^{\frac{s'}{p}}}^{\frac{1}{s'}}\leq  \set{\sum_{\nu} \sqrbrak{\int \brkt{\int |A^{\nu}_{j} (x,\xi)|^{p}\dd x}^{\frac{s}{p}}\dd \xi}^{\frac{s'}{s}}}^{\frac{1}{s'}}\\ &\lesssim 2^{jm} \brkt{\sum_{\nu} |\supp_{\xi} A^{\nu}_{j}|^{\frac{s'}{s}}}^{\frac{1}{s'}}\lesssim 2^{jm} 2^{j\frac{n+1}{2s}}2^{j\frac{n-1}{2s'}},
\end{split}
\end{equation*}
where we have used the fact that the measure of the $\xi-$support of $A^{\nu}_{j}$ is $O(2^{j\frac{n+1}{2}}).$ Now let $p<s'$, then the second term on the right hand side of \eqref{I} is bounded by
\begin{equation*}
\begin{split}
\set{\sum_{\nu} {\int \brkt{\int |A^{\nu}_{j} (x,\xi)|^{s}\dd \xi}^{\frac{p}{s}}\dd x}}^{\frac{1}{p}}&\leq  \set{\sum_{\nu} \sqrbrak{\int \brkt{\int |A^{\nu}_{j} (x,\xi)|^{p}\dd x}^{\frac{s}{p}}\dd \xi}^{\frac{p}{s}}}^{\frac{1}{p}}\\ &\lesssim 2^{jm} \brkt{\sum_{\nu} |\supp_{\xi} A^{\nu}_{j}|^{\frac{p}{s}}}^{\frac{1}{p}}\lesssim 2^{jm} 2^{j\frac{n+1}{2s}}2^{j\frac{n-1}{2p}}.
\end{split}
\end{equation*}
Therefore using \eqref{first term of I} and the estimates for the second term on the right hand side of \eqref{I}, we obtain
\[
	\norm{\textbf{I}_1}_{L^r} \lesssim 2^{j\brkt{m-\varrho \frac{n}{\alpha s}+\frac{n-1}{2}\brkt{\frac{1}{s}+\frac{1}{\min(p,s')}}}}\norm{u}_{L^q},
\]
and the constant hidden on the right hand side of this estimate does not depend on $\alpha$.

Define $h(y)=1+2^{2j} y^2_1 + 2^{j} |y'|^2$ and let $M>\frac{n}{2s}$. By H\"older's inequality,
\begin{multline}
	\norm{\textbf{I}_2}_{L^r}\leq\set{\int\{\sum_\nu\int_{g(y)>  2^{-j\varrho}}\abs{U^{\nu}_j(x,y)}^sh(y)^{-sl}\, \dd y\}^{\frac{q}{s}}\, \dd x}^{\frac{1}{q}}\\
	\times \set{\int \{\sum_\nu\int\vert {K_{j}^{\nu}}(x,y)\,h(y)^{M}\vert^{s'}\, \dd y\}^{\frac{p}{s'}}\, \dd x}^{\frac{1}{p}}.
\end{multline}
By Minkowski's integral inequality and \eqref{eq:u}, the first term of the right hand side is bounded by
a constant times \begin{equation}\label{eq:estimate1}
	\begin{split}
		\norm{u}_{L^q} \set{\sum_\nu\int_{g(y)>  2^{-j\varrho}}h(y)^{-sl}\, \dd y}^{\frac{1}{s}} &\lesssim \norm{u}_{L^q}2^{j\frac{n-1}{2s}}2^{\frac{-j(n+1)}{2s}}\{\int_{|y|> 2^{-j\frac{\varrho}{\alpha}}}|y|^{-2sl}\, \dd y\}^{\frac{1}{s}} \\
		&\lesssim \norm{u}_{L^q }2^{j\frac{n-1}{2s}} 2^{\frac{-j(n+1)}{2s}}  2^{j\frac{\varrho}{\alpha} (2M-\frac{n}{s})}.
	\end{split}
\end{equation}
In order to control the second term, let us assume first that $M\in \Z_+$. In this case, Hausdorff-Young's inequality, Minkowski's integral inequality, and the same argument as in the analysis of $\textbf{I}_1$ yield
\begin{multline}\label{integer M estimates for I2}
	\set{\int \{\sum_\nu\int\vert {K_{j}^{\nu}}(x,y)\,h(y)^{M}\vert^{s'}\, \dd y\}^{\frac{p}{s'}}\, \dd x}^{\frac{1}{p}}\leq \\\leq
	\set{\int \{\sum_\nu\brkt{\int\abs{L^M A_{j}^{\nu}(x,\xi)}^s\, \dd \xi}^{\frac{s'}{s}}\}^{\frac{p}{s'}}\, \dd x}^{\frac{1}{p}}
	\lesssim 2^{j(m+ 2M(1-\varrho))} 2^{j\frac{n+1}{2s}} 2^{j\frac{n-1}{2\min(s',p)}}.
\end{multline}
Assume now that $M$ is not an integer. Then we can write it as $[M]+\{M\}$ where $[M]$ denotes the integer part of $M$ and $\{M\}$ its fractional part, which is in the interval $(0,1)$. Therefore, H\"older's inequality with conjugate exponents $\frac{1}{\{M\}}$ and $\frac{1}{1-\{M\}}$ yields

\[
    \begin{split}
        \sum_\nu \int\vert &{K_{j}^{\nu}}(x,y)\,h(y)^{M}\vert^{s'}\,\dd y\\
        &= \sum_\nu \int\vert {K_{j}^{\nu}}(x,y)\vert^{s'\{M\}}\, \vert {K_{j}^{\nu}}(x,y)\vert^{s'(1-\{M\})}h(y)^{s'\{M\}([M]+1)}\, h(y)^{s'[M](1-\{M\})} \,\dd y \\
            &\leq \bigg(\sum_\nu \int \vert {K_{j}^{\nu}}(x,y)\vert^{s'} h(y)^{s'([M]+1)} \, \dd y\bigg)^{\{M\}} \bigg(\sum_\nu \int \vert {K_{j}^{\nu}}(x,y)\vert^{s'}h(y)^{s'[M]}\, \dd y\bigg)^{1-\{M\}}.
\end{split}
\]
Thus,
\begin{multline*}
	{\int \brkt{\sum_\nu \int\vert {K_{j}^{\nu}}(x,y)\,h(y)^{M}\vert^{s'}\, \dd y}^{\frac{p}{s'}}\, \dd x}
	\leq
	\set{\int \bigg(\sum_\nu \int \vert {K_{j}^{\nu}}(x,y)\vert^{s'} h(y)^{s'([M]+1)} \, \dd y\bigg)^{\frac{p}{s'}}\, \dd x}^{\{M\}}\\
	\times \set{\int \bigg(\sum_\nu \int \vert {K_{j}^{\nu}}(x,y)\vert^{s'}h(y)^{s'[M]}\, \dd y\bigg)^{\frac{p}{s'}}\, \dd x}^{1-\{M\}}.
\end{multline*}
Therefore, using \eqref{integer M estimates for I2} we obtain
\begin{equation}\label{eq:kernelestimate}
    \set{\int \{\sum_\nu \int\vert {K_{j}^{\nu}}(x,y)\,h(y)^{M}\vert^{s'}\, \dd y\}^{\frac{p}{s'}}\, \dd x}^{\frac{1}{p}}\leq C_{N} 2^{j(m+ 2M(1-\varrho))} 2^{j\frac{n+1}{2s}}2^{j\frac{n-1}{2\min(s',p)}}.
\end{equation}
Hence, for every $2M>\frac{n}{s}$, \eqref{eq:kernelestimate} and \eqref{eq:estimate1} yields
\[
	\norm{\textbf{I}_2}_{L^r} \lesssim 2^{j(m+2M(1-\varrho))}2^{j\frac{\varrho}{\alpha} (2M-\frac{n}{s})} 2^{j\frac{n-1}{2}\brkt{\frac{1}{s}+\frac{1}{\min(p,s')}}}\norm{u}_{L^q},
\]
with a constant independent of $\alpha$.
Now putting the estimates for $\textbf{I}_1$ and $\textbf{I}_2$ together and summing, yield that for any $\alpha>0$,
\[
    \norm{T_j u}_{L^r}\lesssim \brkt{2^{j(m+2M(1-\varrho)+\frac{n-1}{2}\brkt{\frac{1}{s}+\frac{1}{\min(p,s')}})}2^{j\frac{\varrho}{\alpha} (2M-\frac{n}{s})}+2^{j\brkt{m-\varrho \frac{n}{\alpha s}+\frac{n-1}{2}\brkt{\frac{1}{s}+\frac{1}{\min(p,s')}}}}}\norm{u}_{L^q},
\]
Therefore letting $\alpha$ tend to $\infty,$ we obtain
\[
    \norm{T_j u}_{L^r}\lesssim 2^{j(m+2M(1-\varrho)+\frac{n-1}{2}\brkt{\frac{1}{s}+\frac{1}{\min(p,s')}})}\norm{u}_{L^q}.
\]
Now if we let $R:=\min(r,1),$ we get
\[
	\norm{\sum_{j=1}^{\infty}T_{j} u}_{L^r}^R\leq \sum_{j=1}^{\infty}\norm{T_{j} u}_{L^r}^R\lesssim
	\sum_{j=1}^\infty 2^{jR \brkt{\frac{n-1}{2}\brkt{\frac{1}{s}+\frac{1}{\min(p,s')}}+m+2M(1-\varrho)}}\norm{u}_{L^q}^R\lesssim \norm{u}_{L^q}^R,
\]
provided  $m< -\frac{n-1}{2}\brkt{\frac{1}{s}+\frac{1}{\min(p,s')}}+2M(\varrho-1)$, for $2M>\frac{n}{s}$.
Therefore, the inequality holds provided
\[
    m< -\frac{n-1}{2}\brkt{\frac{1}{s}+\frac{1}{\min(p,s')}}+\frac{n(\varrho-1)}{s}.
\]
\end{proof}

In the case that $T_a$ is a pseudodifferential operator, with minor modifications in the previous argument we obtain the following result, which improves \cite[Thm. 5.2]{MRS} for the case $w=1$ and $\mu$ the Lebesgue measure:
\begin{thm}\label{thm:pseudodifferential}
Let $a \in L^p S^m_\varrho$, $\varphi\brkt{x,\xi}=\pr{x}{\xi}$ and suppose that $0<r\leq \infty$, $1\leq p,q \leq \infty$, satisfy the relation $1/r = 1/q + 1/p$. Suppose further that $\varrho \leq 1$, $s=\min(2,p,q)$ and
\[
	m < \frac{n(\varrho-1)}{s}.
\]
Then the operator $T_a$ is bounded from $L^q$ to $L^r$ and its norm is bounded by a constant $C$, depending only on $n$, $m$, $\varrho$, $p$, $q$, and a finite number of $C_\alpha$'s in Definition $\ref{LpSmrho definition}.$
\end{thm}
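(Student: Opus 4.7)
My plan is to follow the proof of Theorem \ref{thm:Linear} verbatim, with one simplification: since the phase $\varphi(x,\xi)=\pr{x}{\xi}$ is linear in $\xi$, the Seeger-Sogge-Stein angular decomposition is superfluous, and only the dyadic Littlewood-Paley decomposition \eqref{eq:LittlewoodPaley} is needed. Removing the sum over cones $\nu$ and the inflated cone-support volumes eliminates the $\frac{n-1}{2}\brkt{\frac{1}{s}+\frac{1}{\min(p,s')}}$ term from the order threshold, leaving only the tame loss $\frac{n(\varrho-1)}{s}$.

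Decompose $T_a = T_0 + \sum_{j\geq 1} T_j$, where $T_j$ has amplitude $A_j(x,\xi)=a(x,\xi)\Psi_j(\xi)$ localized to $|\xi|\sim 2^j$. The low-frequency piece $T_0$ is bounded $L^q\to L^r$ by Theorem \ref{Linearlow}, since $\pr{x}{\xi}\in \Phi^k$ for every $k$ and trivially verifies the SND condition (its mixed Hessian is the identity). For each high-frequency piece, write $T_j u(x) = \int K_j(x, x-y)\,u(y)\, \dd y$ with $K_j(x,z) = (2\pi)^{-n}\int A_j(x,\xi)\, e^{i\pr{z}{\xi}}\, \dd \xi$, and split the integral at $|y|=2^{-j\varrho}$ into $\mathbf{I}_1 + \mathbf{I}_2$.

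On $\mathbf{I}_1$, I apply H\"older in $y$ with exponents $s$ and $s'$ and Hausdorff-Young (valid since $s=\min(2,p,q)\leq 2$) to bound $\norm{K_j(x,\cdot)}_{L^{s'}_y}$ by $\norm{A_j(x,\cdot)}_{L^s_\xi}$. Taking $L^q_x$ and $L^p_x$ norms and using Minkowski (since $s\leq p$) together with $\norm{A_j(\cdot,\xi)}_{L^p}\lesssim 2^{jm}$ on $|\supp_\xi A_j|\lesssim 2^{jn}$ yields
\[
    \norm{\mathbf{I}_1}_{L^r} \lesssim 2^{-jn\varrho/s}\cdot 2^{jm}\cdot 2^{jn/s}\,\norm{u}_{L^q} = 2^{j(m+n(1-\varrho)/s)}\norm{u}_{L^q}.
\]
On $\mathbf{I}_2$, I integrate by parts with $L=I-2^{2j}\Delta_\xi$, which introduces the weight $h(y)^M=(1+2^{2j}|y|^2)^M$ on $K_j$ at the cost of replacing $A_j$ by $L^M A_j$. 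The symbol estimate $\norm{L^M A_j(\cdot,\xi)}_{L^p}\lesssim 2^{j(m+2M(1-\varrho))}$, together with the tail bound
\[
    \brkt{\int_{|y|>2^{-j\varrho}}h(y)^{-sM}\, \dd y}^{1/s}\lesssim 2^{-2jM(1-\varrho)-jn\varrho/s}
\]
(valid whenever $2sM>n$), and the same H\"older/Hausdorff-Young/Minkowski chain, produce the identical bound $\norm{\mathbf{I}_2}_{L^r}\lesssim 2^{j(m+n(1-\varrho)/s)}\norm{u}_{L^q}$; the $M$-dependence cancels exactly, a direct consequence of the phase being linear.

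Setting $R=\min(r,1)$ and using $R$-subadditivity of $\norm{\cdot}_{L^r}^R$, the series $\sum_{j\geq 1}\norm{T_j u}_{L^r}^R$ converges provided $m+n(1-\varrho)/s<0$, i.e.\ $m<n(\varrho-1)/s$, which is the hypothesis. The only routine subtlety is verifying the cancellation of the $M$-dependent factors in the $\mathbf{I}_2$ estimate; beyond that there is no real obstacle, the argument being a genuinely streamlined specialization of Theorem \ref{thm:Linear}.
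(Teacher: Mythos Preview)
Your proposal is correct and is precisely the argument the paper has in mind: the paper gives no separate proof of this theorem, remarking only that it follows ``with minor modifications in the previous argument,'' and you have identified and carried out those modifications---dropping the angular sum over $\nu$ and replacing the anisotropic weights $g,h$ by their isotropic analogues, which is exactly what removes the $\frac{n-1}{2}\bigl(\frac{1}{s}+\frac{1}{\min(p,s')}\bigr)$ term. The one small difference is that in your version the $M$-dependent factors in $\mathbf{I}_2$ cancel outright (so any fixed integer $M>n/(2s)$ suffices), whereas in the FIO proof of Theorem~\ref{thm:Linear} the $\alpha\to\infty$ device decouples $\mathbf{I}_1$ from the $\varrho$-loss and leaves an optimization over $M$; this is a genuine simplification afforded by the linear phase and not a deviation from the paper's method.
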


In the case of $q=2\leq p,$ Theorem \ref{thm:Linear} can be improved to yield a result similar to Theorem \ref{thm:pseudodifferential} for FIOs, but before we proceed to that, we will need a couple of lemmas.

\begin{lem}\label{lem:calculus_lemma} If $a\in L^pS^{m_1}_\varrho$ and $b\in L^q S^{m_2}_\varrho$ then $a\cdot b\in L^r S^{m_1+m_2}_{\varrho}$ where $\frac1 r =\frac1 p+\frac1 q$, $1\leq p,q\leq \infty$.
Moreover, if $\eta(\xi)\in C^{\infty}_{0}$ and $a_\varepsilon(x,\xi):=a(x,\xi)\eta(\varepsilon \xi)$ and $\varepsilon \in[0,1),$ then one has
\[
    \sup_{0<\varepsilon\leq 1}\sup_{\xi\in \R^n} \langle \xi\rangle^{-m+\varrho |
\alpha|} \norm{\d^\alpha_\xi a_\varepsilon (\cdot,\xi)}_{L^p}\leq c_{\eta,\abs{\alpha},\varrho}\abs{a}_{p,m,\abs{\alpha}}.
\]

\end{lem}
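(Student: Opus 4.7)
The plan is to establish both claims by direct application of the Leibniz rule combined with H\"older's inequality; the only additional input for the second claim is the compact support of $\eta$, which lets us absorb all $\varepsilon$-dependence into a uniform constant.

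For the first claim, Leibniz's rule gives
\[
    \d^{\alpha}_{\xi}(a\cdot b)(x,\xi)=\sum_{\beta\leq\alpha}\binom{\alpha}{\beta}\d^{\beta}_{\xi}a(x,\xi)\,\d^{\alpha-\beta}_{\xi}b(x,\xi),
\]
and H\"older's inequality in $x$ with exponents $p$ and $q$ (with the usual conventions if either is $\infty$) yields
\[
    \norm{\d^{\beta}_{\xi}a(\cdot,\xi)\,\d^{\alpha-\beta}_{\xi}b(\cdot,\xi)}_{L^{r}}\leq \norm{\d^{\beta}_{\xi}a(\cdot,\xi)}_{L^{p}}\,\norm{\d^{\alpha-\beta}_{\xi}b(\cdot,\xi)}_{L^{q}}.
\]
Applying the $L^{p}S^{m_{1}}_{\varrho}$ and $L^{q}S^{m_{2}}_{\varrho}$ bounds from Definition \ref{LpSmrho definition} to the two factors gives $\p{\xi}^{m_{1}-\varrho\abs{\beta}}\p{\xi}^{m_{2}-\varrho\abs{\alpha-\beta}}=\p{\xi}^{m_{1}+m_{2}-\varrho\abs{\alpha}}$, and summing over $\beta\leq\alpha$ closes the argument.

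For the second claim, Leibniz's rule gives
\[
    \d^{\alpha}_{\xi}a_{\varepsilon}(x,\xi)=\sum_{\beta\leq\alpha}\binom{\alpha}{\beta}\d^{\beta}_{\xi}a(x,\xi)\,\varepsilon^{\abs{\alpha-\beta}}(\d^{\alpha-\beta}\eta)(\varepsilon\xi).
\]
Taking the $L^{p}$ norm in $x$ and invoking the symbol estimate for $a$ one obtains
\[
    \p{\xi}^{-m+\varrho\abs{\alpha}}\norm{\d^{\alpha}_{\xi}a_{\varepsilon}(\cdot,\xi)}_{L^{p}}\leq C_{\alpha}\,\abs{a}_{p,m,\abs{\alpha}}\sum_{\beta\leq\alpha}\p{\xi}^{\varrho\abs{\alpha-\beta}}\varepsilon^{\abs{\alpha-\beta}}\abs{(\d^{\alpha-\beta}\eta)(\varepsilon\xi)}.
\]
The decisive observation is that on the support of $(\d^{\alpha-\beta}\eta)(\varepsilon\xi)$ the vector $\varepsilon\xi$ lies in $\supp\eta$, whence $\varepsilon\abs{\xi}\leq R:=\sup\{\abs{\zeta}:\zeta\in\supp\eta\}$; combined with $\varepsilon\leq 1$ this yields $\varepsilon\p{\xi}\leq 1+R$. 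Using $\varrho\leq 1$, which is built into Definition \ref{LpSmrho definition}, one estimates
\[
    \p{\xi}^{\varrho\abs{\alpha-\beta}}\varepsilon^{\abs{\alpha-\beta}}\leq (\varepsilon\p{\xi})^{\abs{\alpha-\beta}}\leq (1+R)^{\abs{\alpha-\beta}},
\]
so each term of the sum is bounded by a constant $C_{\eta,\abs{\alpha},\varrho}$ uniformly in $\xi\in\R^{n}$ and $\varepsilon\in(0,1]$. Combined with the prefactor $\abs{a}_{p,m,\abs{\alpha}}$ this gives the claimed uniform bound.

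No substantial obstacle is anticipated. The only points requiring care are the standard treatment of the boundary cases $p=\infty$ or $q=\infty$ in the H\"older step for the first claim, and the role of $\varrho\leq 1$ in the second claim, which is exactly what allows the factorisation $\p{\xi}^{\varrho\abs{\alpha-\beta}}\varepsilon^{\abs{\alpha-\beta}}\leq (\varepsilon\p{\xi})^{\abs{\alpha-\beta}}$ and the simultaneous absorption of both powers on the support of $\eta$.
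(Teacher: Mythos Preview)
Your proof is correct and follows exactly the approach indicated in the paper, which simply states that ``the result follows directly from Leibniz's rule and H\"older's inequality.'' Your write-up supplies the details for both parts, including the support argument for the second claim that the paper leaves implicit.
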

\begin{proof} The result follows directly from Leibniz's rule and H\"older's inequality.
\end{proof}

\begin{lem}\label{lem:H_M} Let $2\leq p\leq \infty$, $0\leq \varrho\leq 1$, $a \in L^p S^m_\varrho$ and $r=\frac{2p}{p+2}.$ For $u\in \mathscr{S},$ a real number $M>n,$ and all multi-indices $\alpha,\beta$ with $\beta\leq \alpha$, set
\begin{equation}
\label{eq:H_M}
	H_M^{\alpha,\beta} u(x,\xi):= |\partial^{\beta} a(x,\xi)| {\int \brkt{1+2^j\abs{x-y}}^{-M} | \partial^{\alpha-\beta} a(y,\xi)| |u(y)|\, \dd y }.
\end{equation}
Then for every $u\in L^{r'}$
\[
	\norm{H_M^{\alpha,\beta}u(\cdot,\xi)}_{L^{r}}\leq C_{M}
	\abs{a}_{p,m,\abs{\alpha}}^2  2^{-jn} \left< \xi\right>^{2m-\varrho\abs{\alpha}}\norm{u}_{L^{r'}}.
\]
\end{lem}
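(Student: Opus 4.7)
The plan is to exploit the fact that, by the choice of $r$, the reciprocals satisfy $\frac{1}{r}=\frac{1}{p}+\frac{1}{2}$ and equivalently $\frac{1}{p}+\frac{1}{r'}=\frac{1}{2}$. This triple relation is exactly what is needed in order to split $H_M^{\alpha,\beta}u$ into three factors (two symbol derivatives in $L^p$ and $u$ in $L^{r'}$) via two applications of H\"older's inequality, with a Young convolution step sandwiched in between.

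First I would set $K_j(z):=(1+2^j\abs{z})^{-M}$ and observe that, since $M>n$, a change of variable gives $\norm{K_j}_{L^1}=C_M 2^{-jn}$. Writing $g(y):=\abs{\partial^{\alpha-\beta}a(y,\xi)}\abs{u(y)}$, the definition in \eqref{eq:H_M} becomes
\[
    H_M^{\alpha,\beta}u(x,\xi)=\abs{\partial^\beta a(x,\xi)}\cdot (K_j*g)(x).
\]
H\"older's inequality in $x$ with exponents $p$ and $2$ (using $\tfrac{1}{r}=\tfrac{1}{p}+\tfrac{1}{2}$) then yields
\[
    \norm{H_M^{\alpha,\beta}u(\cdot,\xi)}_{L^r}\leq \norm{\partial^\beta a(\cdot,\xi)}_{L^p}\,\norm{K_j*g}_{L^2}.
\]

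Next I would control the convolution by Young's inequality ($L^1*L^2\to L^2$), obtaining $\norm{K_j*g}_{L^2}\leq C_M 2^{-jn}\norm{g}_{L^2}$, and then apply H\"older's inequality in $y$ with exponents $p$ and $r'$ (using $\tfrac{1}{p}+\tfrac{1}{r'}=\tfrac{1}{2}$) to get
\[
    \norm{g}_{L^2}\leq \norm{\partial^{\alpha-\beta}a(\cdot,\xi)}_{L^p}\,\norm{u}_{L^{r'}}.
\]
Chaining the three estimates and applying the $L^pS^m_\varrho$ bound $\norm{\partial^\gamma a(\cdot,\xi)}_{L^p}\leq \abs{a}_{p,m,\abs{\alpha}}\p{\xi}^{m-\varrho\abs{\gamma}}$ for the two multi-indices $\gamma=\beta$ and $\gamma=\alpha-\beta$ (both admissible since $\beta\leq\alpha$), together with the identity $\varrho\abs{\beta}+\varrho\abs{\alpha-\beta}=\varrho\abs{\alpha}$, reconstructs the factor $\p{\xi}^{2m-\varrho\abs{\alpha}}$ and the squared seminorm $\abs{a}_{p,m,\abs{\alpha}}^2$ in the claimed inequality.

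There is no serious obstacle; the only point that deserves attention is the exponent arithmetic at the endpoints. When $p=\infty$ one has $r=r'=2$ and the first H\"older step degenerates to an $L^\infty\cdot L^2$ bound; when $p=2$ one has $r=1$, $r'=\infty$ and the second H\"older becomes $L^2\cdot L^\infty$. Both degenerate cases are handled uniformly by exactly the same scheme, so no separate argument is required.
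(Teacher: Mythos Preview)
Your proof is correct and follows essentially the same route as the paper. The only cosmetic difference is that where you invoke Young's inequality $L^1*L^2\to L^2$ for the convolution $K_j*g$, the paper instead applies Minkowski's integral inequality to pull the $L^2$ norm inside the $y$-integral; in either case the bound $C_M 2^{-jn}\norm{g}_{L^2}$ emerges from the same computation $\int(1+2^j|y|)^{-M}\,\dd y=C_M 2^{-jn}$.
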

\begin{proof}
Since $\frac{1}{r}=\frac{1}{p}+\frac{1}{2}$, H\"older's and Minkowski's inequalities yield
\[
	\begin{split}
	\norm{H_M^{\alpha,\beta}u(\cdot,\xi)}_{L^{r}}&\leq \Vert{\partial^{\beta} a(\cdot,\xi)}\Vert_{L^p}
	 \norm{{\int  \brkt{1+2^j\abs{y}}^{-M} |\partial^{\alpha-\beta} a(\cdot-y,\xi) u(\cdot-y)|\, \dd y }}_{L^2}\\
	 &\leq \Vert{\partial^{\beta} a(\cdot,\xi)}\Vert_{L^p} \int  \brkt{1+2^j\abs{y}}^{-M}\, \dd y \Vert u{\partial^{\alpha-\beta} a(\cdot,\xi) }\Vert_{L^2}\\
	 &\leq C_{M} 2^{-jn}\Vert{\partial^{\beta} a(\cdot,\xi)}\Vert_{L^p} \Vert u{\partial^{\alpha-\beta} a(\cdot,\xi) }\Vert_{L^2},
	\end{split}
\]
provided $M>n$.
On the other hand, since $\frac{1}{2}=\frac{1}{p}+\frac{1}{r'}$,
H\"older's inequality yields
\[
	\Vert u{\partial^{\alpha-\beta} a(\cdot,\xi) }\Vert_{L^2}\leq \Vert u\Vert_{L^{r'}} \Vert \partial^{\alpha-\beta} a(\cdot,\xi)\Vert_{L^p}.
\]
Therefore, since $a\in L^p S^m_\varrho$ one has
\[
	\norm{H_M^{\alpha,\beta}(\cdot,\xi)}_{L^{r}}\leq C_{M} \abs{a}_{p,m,\abs{\alpha-\beta}} \abs{a}_{p,m,\abs{\beta}}2^{-jn}\left< \xi\right>^{2m-\varrho\abs{\alpha}} \norm{u}_{L^{r'}},
\]
from which the result follows.
\end{proof}

\begin{thm}\label{thmL2}
Assume that $a \in L^p S^m_\varrho$ with $0\leq \varrho\leq 1$, $2\leq p\leq \infty$, and $\varphi\in \Phi^2$ is a phase function satisfying the SND condition. If $r=\frac{2p}{p+2}$ and $m < \frac{n(\varrho-1)}{2},$ then the operator $T_a$ is bounded from $L^2$ to $L^r$ and its norm is bounded by a constant $C$, depending only on $n$, $m$, $\varrho$, $p$, and a finite number of $C_\alpha$'s in Definition $\ref{LpSmrho definition}.$
\end{thm}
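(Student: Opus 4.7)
The plan is to combine a Littlewood--Paley decomposition with a $TT^*$ argument, using Lemma~\ref{lem:H_M} as the sharp input. First, write $a = a\Psi_0 + \sum_{j\geq 1} a\Psi_j$ and set $T_j = T_{a\Psi_j}$. The low-frequency piece $T_{a\Psi_0}$ is $L^2\to L^r$ bounded by Theorem~\ref{Linearlow}, since $\Psi_0$ has compact frequency support and $1/r=1/2+1/p$ lies in the admissible range there. For each $j\geq 1$, the duality identity
\[
    \|T_j\|_{L^2\to L^r}^2 \leq \|T_j T_j^*\|_{L^{r'}\to L^r}
\]
(where $r\in[1,2]$ and $r'\in[2,\infty]$) reduces matters to bounding the integral operator $T_j T_j^*$ whose Schwartz kernel is
\[
    K_j(x,y) = \frac{1}{(2\pi)^n}\int e^{i(\varphi(x,\xi)-\varphi(y,\xi))}\,a(x,\xi)\,\overline{a(y,\xi)}\,\Psi_j(\xi)^2\,d\xi.
\]

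Next, I estimate $K_j$ by repeated integration by parts in $\xi$, using the transport operator with symbol $\nabla_\xi(\varphi(x,\xi)-\varphi(y,\xi))/|\nabla_\xi(\varphi(x,\xi)-\varphi(y,\xi))|^2$. Since $\varphi\in\Phi^2$ is positively homogeneous of degree one in $\xi$, the gradient $\nabla_\xi\varphi$ is homogeneous of degree zero; the SND condition combined with Schwartz's global inverse function theorem (exactly as in the proof of Theorem~\ref{Linearlow}) yields the crucial lower bound $|\nabla_\xi(\varphi(x,\xi)-\varphi(y,\xi))|\gtrsim|x-y|$, while each additional $\xi$-derivative of $\nabla_\xi\varphi$ or of $\Psi_j^2$ costs at most a factor $2^{-j}$ on the shell $|\xi|\sim 2^j$. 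After $N$ integrations by parts, combined with the trivial no-IBP bound in the near range $2^j|x-y|\leq 1$, one obtains a unified kernel estimate of the form
\[
    |K_j(x,y)|\lesssim 2^{j\sigma}(1+2^j|x-y|)^{-M}\sum_{\beta\leq\alpha}\int|\partial_\xi^\beta a(x,\xi)|\,|\partial_\xi^{\alpha-\beta}a(y,\xi)|\,\Psi_j^2(\xi)\,d\xi,
\]
for some multi-index $\alpha$ with $|\alpha|\leq N$, a real $M>n$, and an explicit exponent $\sigma=\sigma(N,|\alpha|)$.

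In the third step, $|T_j T_j^*v(x)|$ is pointwise dominated by $2^{j\sigma}\int\Psi_j^2(\xi)\sum_\beta H_M^{\alpha,\beta}|v|(x,\xi)\,d\xi$; taking $L^r_x$ norms, pulling the $\xi$-integral outside via Minkowski, applying Lemma~\ref{lem:H_M}, and integrating over $|\xi|\sim 2^j$ (whose volume $\sim 2^{jn}$ cancels the $2^{-jn}$ factor of the lemma), then optimising the choice of $|\alpha|$ and $M$, yields
\[
    \|T_j\|_{L^2\to L^r}\lesssim 2^{j\left(m+\frac{n(1-\varrho)}{2}\right)}.
\]
Since $r\in[1,2]$, the $L^r$-triangle inequality is genuine and summing the resulting geometric series in $j\geq 1$ converges exactly when $m+n(1-\varrho)/2<0$, i.e.\ $m<n(\varrho-1)/2$. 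Combined with the low-frequency bound this proves the theorem.

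The main technical obstacle is the kernel estimate in the second step. Because $a$ is only measurable in $x$, no $x$-differentiations are permitted and the only regularising mechanism is $\xi$-integration by parts. The bookkeeping requires balancing the gain $2^{-j}$ per $\xi$-derivative of $\nabla_\xi\varphi$ or $\Psi_j^2$, the loss $2^{-j\varrho}$ per $\xi$-derivative landing on the rough amplitude (which only admits $L^p$ control), and the conversion of the $|x-y|^{-N}$ factor arising from $N$ applications of the transport operator into the scale-invariant weight $(1+2^j|x-y|)^{-M}$ demanded by Lemma~\ref{lem:H_M}. Performing this bookkeeping sharply is what produces the claimed threshold $n(\varrho-1)/2$.
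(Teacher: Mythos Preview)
Your outline is essentially the paper's proof: Littlewood--Paley decomposition, the low-frequency piece via Theorem~\ref{Linearlow}, then $TT^*$ on each dyadic shell with a non-stationary-phase kernel bound fed into Lemma~\ref{lem:H_M}. Two small corrections worth making when you write it out: (i) the lower bound $|\nabla_\xi(\varphi(x,\xi)-\varphi(y,\xi))|\gtrsim|x-y|$ follows from the mean value theorem together with the SND condition and the $\Phi^2$ bounds (this is \eqref{Phi cond 1} in the paper), not from Schwartz's global inverse function theorem; and (ii) the per-piece estimate one actually obtains is $\|T_j\|_{L^2\to L^r}\lesssim_M 2^{j(m+M(1-\varrho)/2)}$ for each real $M>n$ (the paper interpolates between $[M]$ and $[M]+1$ to allow non-integer $M$), and the threshold $m<n(\varrho-1)/2$ emerges by choosing $M$ just above $n$ rather than by attaining the endpoint exponent you wrote.
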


\begin{proof}
We define a Littlewood-Paley partition of unity as in \eqref{eq:LittlewoodPaley}. Set $a_j(x,\xi)=a(x,\xi)\Psi_j(\xi)$ for for $j\geq 0$.

By Lemma \ref{lem:calculus_lemma}, $a_j\in L^{p}S^m_\varrho$ and for any $s\in \Z^+$
\[
    \sup_{j\geq 0} \abs{a_j}_{p,m,s}\lesssim \abs{a}_{p,m,s}.
\]

That $T_{a_0}$ satisfies the required bound follows from Theorem \ref{Linearlow}, so it is enough to consider the boundedness of the operators $T_{a_j}$ for $j\geq 1$. To this end, we begin by studying the boundedness of $S_j:=T_{a_j}T^{\ast}_{a_j}$.  A simple calculation yields that $S_ju(x)=\int K_j(x,y) u(y)\, \dd y$ with
 \begin{equation*}
 K_j(x,y)=\frac{1}{(2\pi)^{n}}\int e^{i(\varphi(x,\xi)-\varphi(y,\xi))} a_j(x,\xi) \overline{a_j(y,\xi)}\, \dd\xi.
 \end{equation*}
Now since $\varphi$ is homogeneous of degree $1$ in the $\xi$ variable, the kernel $K_j (x,y)$ can be written as
\begin{equation*}
K_{j}(x,y)=\frac{2^{jn}}{(2\pi)^{n}}\int m_{j}(x,y,2^{j}\xi)
e^{i2^j \Phi(x,y,\xi)}\, \dd\xi.
\end{equation*}
with $\Phi(x,y,\xi)= \varphi (x,\xi) -\varphi (y,\xi)$ and $m_j(x,y,\xi)=a_j(x,\xi) \overline{a_j(y,\xi)}$. Observe that the support of $m_{j}(x,y,2^{j}\xi)$ lies in the compact set $\mathcal{K}=\{\frac{1}{2}\leq \abs{\xi}\leq 2\}$.  From the mean value theorem, \eqref{C_alpha} and \eqref{lower_bound on mixed hessian}, it follows that
\begin{equation}
\label{Phi cond 1}
\vert \nabla_{\xi}\Phi (x,y, \xi)\vert \thickapprox \vert x-y\vert,
\end{equation}
for any $x,y\in \R^n$ and $\xi\in \mathcal{K}$.

We claim that, for any $M>n$ there is a constant $C_M$ depending only on $M$ such that
\begin{equation}\label{LPpiece}
	\norm{S_j u}_{L^{r}}\leq C_M \abs{a}_{p,m,[M]+1}^2 2^{2j m} 2^{jM(1-\varrho)} \norm{u}_{L^{r'}},
\end{equation}
for any $u\in L^{r'}$, where $[M]$ stands for the integer part of $M$.

Assume first that $M>n$ is an integer.  Fix $x\neq y$ and set $f(\xi):=\Phi (x,y, \xi)$, $\Psi=\abs{\nabla_\xi f}^2$. By the mean value theorem, \eqref{C_alpha} and \eqref{Phi cond 1}, for any multi-index $\alpha$ with $\abs{\alpha}\geq 1$ and any $\xi\in \mathcal{K}$,
\[
    \abs{\d^\alpha_{\xi} f(\xi)}\leq |\d^\alpha_\xi \nabla_x \varphi(z_{x,y},\xi)| |x-y|\lesssim \vert \nabla_{\xi}\Phi (x,y, \xi)\vert=\vert \nabla_{\xi}f\vert = \Psi^{1/2}.
\]
On the other hand, since
\[
    \d^\alpha \Psi=\sum_{j=1}^n \sum  \binom{\alpha}{\beta} \d^\beta\d_jf \d^{\alpha-\beta}\d_j f,
\]
it follows that, for any $\abs{\alpha}\geq 0$, $\abs{\d^\alpha \Psi}\lesssim \Psi$ and the constants are uniform on $x$ and $y$. Thus \eqref{Phi cond 1} and Lemma \ref{lem:technic} with $u=m_{j}(x,y,2^{j}\xi),$ $f=\Phi (x,y, \xi)$ yield
\begin{equation*}
\begin{split}
\vert K_{j}(x,y)\vert & \leq 2^{j n} 2^{-j M}\ C_{M,\mathcal{K}} \sum _{\vert \alpha \vert\leq M} 2^{j\abs{\alpha}}\int {\vert \partial^{\alpha}_{\xi} m_{j}(x,y,2^j \xi)\vert \vert \nabla_{\xi}\Phi(x,y,\xi)\vert^{-M}} \, \dd \xi\\
&\lesssim 2^{-j M} \abs{x-y}^{-M} \sum _{\vert \alpha \vert\leq M}  2^{j\abs{\alpha}} \int \abs{\partial^{\alpha}_{\xi} m_{j}(x,y,\xi)}\, \dd \xi.
\end{split}
\end{equation*}
On the other hand
\begin{equation*}
\vert K_{j}(x,y)\vert \leq \int \abs{m_j(x,y,\xi)}\, \dd \xi\lesssim \sum _{\vert \alpha \vert\leq M}  2^{j\abs{\alpha}} \int \abs{\partial^{\alpha}_{\xi} m_{j}(x,y,\xi)}\, \dd \xi.
\end{equation*}
Therefore
\begin{equation}\label{eq:Kk}
	\vert K_{j}(x,y)\vert\lesssim \brkt{1+2^j\abs{x-y}}^{-M}\sum _{\vert \alpha \vert\leq M}  2^{j\abs{\alpha}} \int \abs{\partial^{\alpha}_{\xi} m_{j}(x,y,\xi)}\, \dd \xi .
\end{equation}
Now since
\begin{equation}\label{eq:mk}
	\abs{\partial^{\alpha}_{\xi} m_{j}(x,y,\xi)}
	\leq \sum_\beta \binom{\alpha}{\beta}  \abs{\partial^{\beta} a_j(x,\xi)  \partial^{\alpha-\beta} a_j(y,\xi) },
\end{equation}
we obtain that
\begin{equation}\label{eq:S_K}
 	S_j u(x)\leq \sum_{\vert \alpha \vert\leq M} \sum_\beta  \binom{\alpha}{\beta}   2^{j\abs{\alpha}}  \int_{\abs{\xi}\sim 2^j}  H_M^{\alpha,\beta}u(x,\xi)\, \dd \xi,
\end{equation}
where, $H_M^{\alpha,\beta}$ is defined as in \eqref{eq:H_M}. Hence Minkowski's inequality, Lemma \ref{lem:H_M} and \eqref{eq:S_K} yield
\begin{equation}\label{eq:S_K_integers}
	\begin{split}
	\norm{S_j u}_{L^{r}}&\leq  c_M  \sum_{\vert \alpha \vert\leq M} \sum  \binom{\alpha}{\beta} \abs{a}_{p,m,\abs{\alpha}}^2 2^{j\abs{\alpha}}
	2^{-jn}  \norm{{u}}_{L^{r'}}\int_{\abs{\xi}\sim 2^j} \left<\xi \right>^{2m-\varrho \abs{\alpha}}\, \dd \xi \\
	& \leq c_M \abs{a}_{p,m,M}^2 2^{2jm} \norm{u}_{L^{r'}}\sum_{\vert \alpha \vert\leq M} 2^{\abs{\alpha}} 2^{j\abs{\alpha}(1-\varrho)}\\
	& \leq c_M \abs{a}_{p,m,M}^2 2^{2jm} 2^{jM(1-\varrho)} \norm{u}_{L^{r'}}.
	\end{split}
\end{equation}

Assume now that $M\geq n+1$ is a real number. Writing $M=[M]+\{M\}$ as the sum of its integer and fractional parts, the estimate \eqref{eq:S_K_integers} yields
\[
	\begin{split}
	\norm{S_j u}_{L^{r}} &=\norm{S_j u}_{L^{r}}^{1-\{M\}}\norm{S_j u}_{L^{r}}^{\{M\}}\\
&\leq
	\brkt{c_{[M]} \abs{a}_{p,m,[M]}^2 2^{2jm} 2^{j[M](1-\varrho)} \norm{u}_{L^{r'}}}^{1-\{M\}}\\
&\qquad\qquad\times\brkt{c_{[M]+1} \abs{a}_{p,m,[M]+1}^2  2^{2 j m} 2^{j([M]+1)(1-\varrho)} \norm{u}_{L^{r'}}}^{\{M\}}\\
	&\leq c_M  \abs{a}_{p,m,[M]+1}^2 2^{2km} 2^{kM(1-\varrho)} \norm{u}_{L^{r'}}.
	\end{split}
\]

Assume now that $n<M<n+1$. Then, writing $M=n+\{M\}$ and letting
\[
	R_{l}(x,y):=
	\sum _{\vert \alpha \vert\leq l}   \sum \binom{\alpha}{\beta}  2^{j\abs{\alpha}}\int_{\abs{\xi}\sim 2^{j}} \abs{\partial^{\beta} a(x,\xi)  \partial^{\alpha-\beta} a(y,\xi) }\, \dd \xi,
\]
we see that the application of \eqref{eq:Kk} and \eqref{eq:mk} with $n$ and $n+1$ yields
\[
	\begin{split}
	\abs{K_j(x,y)} &=\abs{K_j(x,y)}^{1-\{M\}}\abs{K_j(x,y)}^{\{M\}}\\
			&\leq {R_n(x,y)}^{1-\{M\}}  {R_{n+1}(x,y)}^{\{M\}}  \brkt{1+2^j\abs{x-y}}^{-M}.
	\end{split}
\]
Hence, applying H\"older's inequality with the exponents $\frac{1}{\{M\}}$ and $\frac{1}{1-\{M\}}$ we get
\begin{multline*}
	S_j u(x)\leq \brkt{\int R_n(x,y) \brkt{1+2^j\abs{x-y}}^{-M}\abs{u(y)}\, \dd y}^{1-\{M\}}\\
    \times \brkt{\int R_{n+1}(x,y) \brkt{1+2^j\abs{x-y}}^{-M}\abs{u(y)}\, \dd y}^{\{M\}},
\end{multline*}
and another application of the H\"older inequality with exponents $\frac{r}{\{M\}}$ and $\frac{r}{1-\{M\}}$ yields
\begin{multline*}
	\norm{S_j u}_{L^{r}}\leq \norm{\int R_n(x,y) \brkt{1+2^j\abs{x-y}}^{-M}\abs{u(y)}\, \dd y}^{1-\{M\}}_{L_{x}^{r}}\\
    \times
	\norm{\int R_{n+1}(x,y) \brkt{1+2^j\abs{x-y}}^{-M}\abs{u(y)}\, \dd y}^{\{M\}}_{L_{x}^{r}}.
\end{multline*}
Therefore, Minkowski's integral inequality and Lemma \eqref{lem:H_M} yield
\[
	\begin{split}
	\norm{S_j u}_{L^{r}}
	&\leq C_{M} \abs{a}_{p,m,n+1}^2 2^{2j m} 2^{j M(1-\varrho)} \norm{u}_{L^{r'}},
	\end{split}
\]
for all $u\in L^{r'}$.
Thus, using \eqref{LPpiece}, we obtain
\[
	\norm{T^*_{a_j} u}_{L^2}^2=\pr{u}{T_{a_j}T_{a_j}^*u}\leq \norm{u}_{L^{r'}} \norm{S_j u}_{L^{r}}\leq C_{M} \abs{a}_{p,m,[M]+1}^2 2^{2j m} 2^{j M(1-\varrho)}\norm{u}_{L^{r'}}^2,
\]
and so
\[
    \norm{T_{a_j} u}_{L^r}\leq C_{M} \abs{a}_{p,m,[M]+1} 2^{j m} 2^{j \frac{M(1-\varrho)}{2}}\norm{u}_{L^{2}},
\]
for every $u\in L^2$.

Now if $\varrho =1$ and $m<0$ we see that the sum of the Littlewood-Paley pieces $T_{a_j}$ converges and therefore $T_a$ is a bounded operator from $L^{2}$ to $L^{r}$. In case $0\leq \varrho<1$ then the condition $m<\frac{n}{2}(\varrho-1)$ implies that there is a $M_{0}$ with $n<M_{0}<\frac{-2m}{1-\varrho}$. So by choosing $M=M_0$, we have
 \begin{equation}
 \Vert T_{a_j}u\Vert_{L^{r}} \lesssim 2^{j m}2^{ j\frac{M_{0}(1-\varrho)}{2}}\Vert u\Vert_{L^{2}},
 \end{equation}
with $2m+M_{0}(1-\varrho)<0.$ This and the summation of the pieces yield the desired boundedness of $T_a$.

\end{proof}

Here, we shall define a couple of parameters which will appear as the order of our operators in the remainder of this paper.
\begin{defn}\label{defn rune M}

(a) Given $1\leq p,\, q\leq \infty$ define
\[
    \textarc{m}(\rho,p,q):=\left\{
      \begin{array}{ll}
         -\frac{(n-1)}{2}\brkt{\frac{1}{p}+\frac{1}{\min(p,q)}} + \frac{n(\varrho-1)}{\min(p,q)}, & \hbox{if $1\leq p<2$, or $p\geq 2$ and $1\leq q<p'$;} \\
        \frac{n(\varrho-1)}{2}-(n-1)\brkt{\frac{1}{2}-\frac{1}{q}}, & \hbox{if $2\leq p,q$;} \\
        \frac{n(\varrho-1)}{q}-\frac{(n-1)}{1-\frac{2}{p}}\brkt{\frac{1}{q}-\frac{1}{2}}, & \hbox{if $p> 2$ and $p'\leq q\leq 2$.}
      \end{array}
    \right.
\]
(b) Furthermore given $1<q<2$ we set
\[\mathcal{M}(\varrho,p,q):=\frac{n(\varrho-1)}{q}-\frac{n-1}{1+1/p}\brkt{\frac{1}{q}-\frac{1}{2}} \]

\end{defn}
Using the notion above we can prove the following theorem:
\begin{thm}\label{thm:p2} Suppose that $0\leq \varrho\leq 1$ and $p\geq 2$, $1\leq q\leq \infty$, $0<r\leq \infty$, satisfy $\frac{1}{r}=\frac{1}{p}+\frac{1}{q}$. Also, assume that $\varphi\in \Phi^2$ satisfies the SND condition and $a\in L^p S^m_\varrho$ with
$m<\textarc{m}(\varrho,p,q).$
Then the operator $T_a$ is bounded from $L^q$ to $L^r$ and its norm is bounded by a constant $C$, depending only on $n$, $m$, $\varrho$, $p$, $q$, and a finite number of $C_\alpha$'s in Definition $\ref{LpSmrho definition}.$
Furthermore, when $1<q<2$ and
\begin{equation}\label{eq:m_4}
\textarc{m}(\varrho,p,q)\leq m<\mathcal{M}(\varrho,p,q)
\end{equation}
$T_a$ is bounded from $L^q$ to the Lorentz space $L^{r,q}$ and its norm is bounded by a constant $C$ with the same properties as above.
\end{thm}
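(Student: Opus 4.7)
The plan is to obtain the strong $L^q\to L^r$ bound by complex interpolation of the endpoint results in Theorems \ref{thm:Linear} and \ref{thmL2}, and then to derive the Lorentz estimate by real interpolation with a weak-type endpoint at $q=1$. As a preliminary, I observe that the first branch of the definition of $\textarc{m}$ applies, under $p\geq 2$, exactly when $q<p'$: in that sub-range $s:=\min(2,p,q)=q$ and $\min(p,s')=p$ (since $q<p'$ forces $q'>p$), so the hypothesis of Theorem \ref{thm:Linear} reduces to $m<\textarc{m}(\varrho,p,q)$, which concludes the proof in this region. Only the ranges $q\geq p'$ therefore require additional work.

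For these remaining cases I apply Stein's complex interpolation theorem to the analytic family
\begin{equation*}
T_zf(x):=e^{(z-\theta)^2}\int_{\R^n}e^{i\varphi(x,\xi)}\,\langle\xi\rangle^{(1-z)m_0+zm_1-m}\,a(x,\xi)\,\widehat{f}(\xi)\,\dd\xi,\qquad 0\leq\re z\leq 1.
\end{equation*}
By Lemma \ref{lem:calculus_lemma} the symbol of $T_z$ belongs to $L^pS^{(1-\re z)m_0+\re z m_1}_{\varrho}$ with seminorms of at most polynomial growth in $|\im z|$, which the Gaussian factor absorbs. When $q\geq 2$ I take $m_0$ just below $n(\varrho-1)/2$ and $m_1$ just below $n(\varrho-1)/2-(n-1)/2$: Theorem \ref{thmL2} provides the endpoint $\|T_z\|_{L^2\to L^{2p/(p+2)}}\lesssim 1$ on $\re z=0$, and Theorem \ref{thm:Linear} applied at $q=\infty$ (where $s=2$ and $\min(p,s')=2$) provides $\|T_z\|_{L^\infty\to L^p}\lesssim 1$ on $\re z=1$. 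The choice $\theta=1-2/q$ recovers $T_a$ at the interpolated point and produces the strong bound for every $m<\textarc{m}(\varrho,p,q)$ in its Case~2 form. For the subcase $p>2$ and $p'\leq q\leq 2$, I replace the endpoint on $\re z=1$ by Theorem \ref{thm:Linear} at $q=p'$, which yields $\|T_z\|_{L^{p'}\to L^1}\lesssim 1$ for $m_1$ near $n(\varrho-1)/p'-(n-1)/2$; taking $\theta=(1/q-1/2)/(1/p'-1/2)$ produces the Case~3 formula.

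For the Lorentz extension, real interpolation between the strong endpoints $L^2\to L^{2p/(p+2)}$ and $L^1\to L^{p/(p+1)}$ (the latter from Theorem \ref{thm:Linear} at $q=1$) only reaches order $\textarc{m}(\varrho,p,q)$, hence is insufficient. The key further input is a weak-type estimate $L^1\to L^{p/(p+1),\infty}$ valid up to order $\mathcal{M}(\varrho,p,1)=n(\varrho-1)-(n-1)p/(2(p+1))$, which for $p=\infty$ reduces to the classical Seeger-Sogge-Stein weak $(1,1)$-bound at the critical order $-(n-1)/2$. Granted such an endpoint, the real interpolation functor $(\cdot,\cdot)_{t,q}$ with $t=2(q-1)/q$ identifies $(L^1,L^2)_{t,q}=L^q$ and $(L^{p/(p+1),\infty},L^{2p/(p+2)})_{t,q}=L^{r,q}$, while the calculation $(1-t)/2=1/q-1/2$ shows that the interpolated order is exactly $\mathcal{M}(\varrho,p,q)$.

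The principal obstacle is therefore establishing the weak-type endpoint at $q=1$ for the rough class $L^pS^m_\varrho$. I expect this to require reopening the Seeger-Sogge-Stein decomposition and adapting a Calder\'on-Zygmund-type argument, replacing the strong-type summation of the high-frequency dyadic pieces by a weak-type one; once the endpoint is in hand, the rest of the argument is a routine application of standard real and complex interpolation.
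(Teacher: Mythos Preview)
Your treatment of the strong $L^q\to L^r$ bound follows the paper's route exactly: the paper's entire proof is the sentence ``this follows by interpolating the result of Theorem~\ref{thmL2} with the extremal results of Theorem~\ref{thm:Linear} using Riesz--Thorin and Marcinkiewicz interpolation theorems respectively'', and your use of Stein's analytic-family interpolation (with the endpoints $q=2,\infty$ for Case~2 of $\textarc m$ and $q=2,p'$ for Case~3) is the natural way to make this precise, since the order of the amplitude must vary with the interpolation parameter.

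For the Lorentz half, however, you have not produced a proof, and the paper's one-line invocation of Marcinkiewicz does not obviously fill the gap either. Real interpolation between the available strong endpoints $q=1$ (Theorem~\ref{thm:Linear}, threshold $n(\varrho-1)-\tfrac{n-1}{2}(1+\tfrac{1}{p})$) and $q=2$ (Theorem~\ref{thmL2}, threshold $\tfrac{n(\varrho-1)}{2}$) yields the order $\tfrac{n(\varrho-1)}{q}-(n-1)\bigl(1+\tfrac{1}{p}\bigr)\bigl(\tfrac{1}{q}-\tfrac{1}{2}\bigr)$, which coincides with $\mathcal M(\varrho,p,q)$ only at $p=\infty$ and is strictly smaller for every finite $p\ge2$ (compare the coefficients $1+1/p$ and $(1+1/p)^{-1}$). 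Your proposed weak-type $L^1\to L^{p/(p+1),\infty}$ endpoint at order $\mathcal M(\varrho,p,1)$ would indeed interpolate to $\mathcal M$, and for $p=\infty$, $\varrho=1$ it reduces to the Seeger--Sogge--Stein weak $(1,1)$ bound; but for general $p$ this endpoint is nowhere established in the paper, and you leave it as an expectation. Thus the Lorentz assertion remains open in your proposal --- you have, in effect, diagnosed that the stated Marcinkiewicz step cannot reach $\mathcal M$ from Theorems~\ref{thm:Linear} and~\ref{thmL2} alone when $p<\infty$, rather than supplied an alternative argument.
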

\begin{proof} This follows by interpolating the result of Theorem \ref{thmL2} with the extremal results of Theorem \ref{thm:Linear} using Riesz-Thorin and Marcinkiewicz interpolation theorems respectively.
\end{proof}

\section{Global boundedness of multilinear FIOs}\label{bilinear FIO}

In this section we shall apply the boundedness of the linear FIOs obtained in the previous section to the problem of boundedness of bilinear and multilinear operators.

\subsection{Boundedness of bilinear FIOs}

Using an iteration procedure, we are able to reduce the problem of global boundedness of bilinear FIOs to that of boundedness of rough and linear FIOs. Our main result in this context is as follows.

\begin{thm} Let $a\in L_\Pi ^{p}S^\m_\rho(n,2)$ with $\m=(m_1 , m_2)\in \R_{-} \times \R_{-},$ $\rho=(\varrho_1 , \varrho_2)\in [0,1]^{2},$ $1\leq p \leq \infty$, and $\varphi_1,\varphi_2\in \Phi^2$ satisfy the SND condition. Let $1\leq q_1,q_2\leq \infty$. Assume that $q_1=\max(q_1,q_2)\geq p'$. Let
\[
    \frac{1}{r}=\frac{1}{p}+\frac{1}{q_1}+\frac{1}{q_2},
\]
and assume that
\[
    m_1<\textarc{m}(\varrho_1,p,q_1) \quad {\rm and} \quad m_2<\textarc{m}(\varrho_2,r_2,q_2),
\]
with $\frac{1}{r_2}=\frac{1}{p}+\frac{1}{q_1}.$
Then the bilinear FIO $T_a$, defined by
\begin{equation}
\label{Intro:bilinear_Fourier integral operator}
 T_{a} (f,g)(x)= \iint a(x,\xi,\eta)\, e^{ i \varphi_{1}(x, \xi)+ i\varphi_{2}(x, \eta)} \hat{ f}(\xi) \hat{g}(\eta)\,d\xi\, d\eta
\end{equation}
satisfies the estimate
\[
    \norm{T_a(f,g)}_{L^r}\leq C_{a,n} \norm{f}_{L^{q_1}}\norm{g}_{L^{q_2}},
\]
for every $f,\,g\in \S$. Moreover, if $1\leq q_2<2\leq r_2$,
\[
   m_1<\textarc{m}(\varrho_1,p,q_1) \quad {\rm and} \quad \textarc{m}(\varrho_2,r_2,q_2)\leq m_2<\mathcal{M}(\varrho_2,r_2,q_2),
\]
then
\[
    \norm{T_a(f,g)}_{L^{r,q_2}}\leq C_{a,n} \norm{f}_{L^{q_1}}\norm{g}_{L^{q_2}},
\]
for every $f,\,g\in \S$.
\end{thm}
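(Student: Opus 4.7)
The plan is to prove both estimates by iteration, representing the bilinear FIO as a composition of two linear FIOs, each controlled by the main linear results (Theorem~\ref{thm:Linear} and especially Theorem~\ref{thm:p2}). Fix $f\in\S$ and introduce the \emph{conditional amplitude}
\[
    \mathcal{A}_f(x,\eta):=\frac{1}{(2\pi)^n}\int_{\R^n}e^{i\varphi_1(x,\xi)}\,a(x,\xi,\eta)\,\widehat{f}(\xi)\,\dd\xi,
\]
so that one has the identity
\[
    T_a(f,g)(x)=\frac{1}{(2\pi)^n}\int_{\R^n} e^{i\varphi_2(x,\eta)}\,\mathcal{A}_f(x,\eta)\,\widehat{g}(\eta)\,\dd\eta,
\]
exhibiting $g\mapsto T_a(f,g)$ as a linear FIO with phase $\varphi_2\in\Phi^2$ and amplitude $\mathcal{A}_f$.

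The first main step will be to show that $\mathcal{A}_f\in L^{r_2}S^{m_2}_{\varrho_2}$, with every seminorm $\abs{\mathcal{A}_f}_{r_2,m_2,s}$ bounded by a constant multiple of $\norm{f}_{L^{q_1}}$. Differentiating under the integral sign (legitimate for $f\in\S$ by the rapid decay of $\widehat{f}$ and the symbol estimates on $a$) yields
\[
    \partial_\eta^\beta \mathcal{A}_f(x,\eta)=\frac{1}{(2\pi)^n}\int_{\R^n} e^{i\varphi_1(x,\xi)}\,\partial_\eta^\beta a(x,\xi,\eta)\,\widehat{f}(\xi)\,\dd\xi.
\]
For each fixed $\eta$, the product structure of $L^p_\Pi S^{\m}_\rho(n,2)$ places $\partial_\eta^\beta a(\cdot,\cdot,\eta)$ in $L^pS^{m_1}_{\varrho_1}$ (in the variables $(x,\xi)$), with seminorms bounded by $C_\beta \p{\eta}^{m_2-\varrho_2\abs{\beta}}$. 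Since $m_1<\textarc{m}(\varrho_1,p,q_1)$ and the pairing $\frac{1}{r_2}=\frac{1}{p}+\frac{1}{q_1}$ is compatible with the hypothesis $q_1\geq p'$, Theorem~\ref{thm:p2} applied to this linear FIO gives
\[
    \norm{\partial_\eta^\beta \mathcal{A}_f(\cdot,\eta)}_{L^{r_2}}\leq C_\beta \norm{f}_{L^{q_1}}\p{\eta}^{m_2-\varrho_2\abs{\beta}},
\]
which is precisely the desired symbol estimate on $\mathcal{A}_f$.

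The second step simply invokes Theorem~\ref{thm:p2} once more, applied to the linear FIO with SND phase $\varphi_2\in\Phi^2$, amplitude $\mathcal{A}_f\in L^{r_2}S^{m_2}_{\varrho_2}$, input $g\in L^{q_2}$ and output in $L^r$ with $\frac{1}{r}=\frac{1}{r_2}+\frac{1}{q_2}$. The hypothesis $m_2<\textarc{m}(\varrho_2,r_2,q_2)$ then produces
\[
    \norm{T_a(f,g)}_{L^r}\leq C\abs{\mathcal{A}_f}_{r_2,m_2,s_0}\norm{g}_{L^{q_2}}\leq C_{a,n}\norm{f}_{L^{q_1}}\norm{g}_{L^{q_2}},
\]
for a finite number of seminorms $s_0$ depending only on the parameters. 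The Lorentz assertion follows by exactly the same two-step iteration, invoking the second conclusion of Theorem~\ref{thm:p2} at the outer step whenever $1\leq q_2<2\leq r_2$ and $m_2\in[\textarc{m}(\varrho_2,r_2,q_2),\mathcal{M}(\varrho_2,r_2,q_2))$.

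The principal technical obstacle I anticipate is the bookkeeping in the first step: one must verify that the constants $C_\beta$ produced by Theorem~\ref{thm:p2} depend only on a fixed finite collection of the structural constants $C_{\alpha_1,\alpha_2}$ of $a$ (as recorded in the remark following Theorem~B), \emph{independently} of $\eta$, so that the resulting estimate genuinely places $\mathcal{A}_f$ in $L^{r_2}S^{m_2}_{\varrho_2}$ with norm linear in $\norm{f}_{L^{q_1}}$; only then is the second application of Theorem~\ref{thm:p2} legitimate. The remaining concern --- absolute convergence of the iterated integrals defining $\mathcal{A}_f$ for Schwartz $f$ --- follows directly from the symbol estimates and the fast decay of $\widehat{f}$, and a standard density argument extends the resulting bounds from $\S\times\S$ to $L^{q_1}\times L^{q_2}$ when the exponents are finite.
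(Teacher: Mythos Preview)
Your proposal is correct and follows essentially the same iteration argument as the paper: freeze $f$, view $T_a(f,\cdot)$ as a linear FIO with amplitude $\mathcal{A}_f$, use the linear theory to place $\mathcal{A}_f\in L^{r_2}S^{m_2}_{\varrho_2}$ with seminorms $\lesssim\norm{f}_{L^{q_1}}$, and apply the linear theory once more. The only minor imprecision is your exclusive citation of Theorem~\ref{thm:p2} in the two steps; that theorem requires the spatial integrability exponent of the amplitude to be at least $2$, which need not hold for $p$ (first step) or $r_2$ (second step) under the stated hypotheses, so in those ranges one must fall back on Theorem~\ref{thm:Linear} instead --- exactly as the paper does when it writes ``depending on the range of indices, we apply Theorem~\ref{thm:Linear} or Theorem~\ref{thm:p2}.''
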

\begin{proof}
For any $f,\, g\in \S$ set
\[
    \widetilde{a}\brkt{x,\eta}:=\int e^{i\varphi_1(x,\xi)} a(x,\xi,\eta) \widehat{f}(\xi)\, \dd \xi.
\]
Observe that the amplitude $\partial^{\alpha}_{\eta}a(\cdot,\xi,\eta)\in L^pS^{m_1}_{\varrho_1}$ if $\eta $ is hold fixed, and moreover for any $s\in \Z_+$,
\[
    \abs{\partial^{\alpha}_{\eta}a(\cdot,\cdot,\eta)}_{m_1,p,s}\leq c_{\alpha,s}\p{\eta}^{m_2-\varrho_2 \abs{\alpha}}.
\]
Thus, depending on the range of indices, we apply Theorem \ref{thm:Linear} or Theorem \ref{thm:p2} to obtain
\[
    \norm{\partial^{\alpha}_{\eta}\widetilde{a}\brkt{\cdot,\eta}}_{L^{r_2}}\lesssim \p{\eta}^{m_2-\varrho_2 \abs{\alpha}} \norm{f}_{L^{q_1}},
\]
provided $m_1<\textarc{m}(\varrho_1,p,q_1)$. This means that $\widetilde{a}\in L^{r_2}S^{m_2}_{\varrho_2}$ and for any $s\in \Z_+$,
\[
    \abs{\widetilde{a}}_{r_2,m_2,s}\lesssim \norm{f}_{L^{q_1}}.
\]
Now applying either Theorem \ref{thm:Linear} or Theorem \ref{thm:p2} again, we obtain the desired result.
\end{proof}

\begin{cor}\label{cor: cor of the main bilinear} Assume that $a\in L^\infty S^{m}_\varrho(n,2)$ and $\varphi_1,\varphi_2\in \Phi^2$ satisfy the SND condition. For $1\leq q_1,q_2\leq \infty$ let
\[
    \frac{1}{r}=\frac{1}{q_1}+\frac{1}{q_2},
\]
and assume that $q_1=\max(q_1,q_2)$ and
\[
    m<\textarc{m}(\varrho,\infty,q_1)+\textarc{m}(\varrho,q_1,q_2).
\]
Then the bilinear FIO $T_a$ defined by \eqref{Intro:bilinear_Fourier integral operator} satisfies
\[
    \norm{T_a(f,g)}_{L^r}\leq C_{a,n} \norm{f}_{L^{q_1}}\norm{g}_{L^{q_2}},
\]
for every $f,\,g\in \S$. Moreover, if $1\leq q_2<2\leq q_1$ and
\[
  \textarc{m}(\varrho,\infty,q_1)+ \textarc{m}(\varrho,q_1,q_2)\leq m<\textarc{m}(\varrho,\infty,q_1)+\mathcal{M}(\varrho,q_1,q_2)
\]
then
\[
    \norm{T_a(f,g)}_{L^{r,q_2}}\leq C_{a,n} \norm{f}_{L^{q_1}}\norm{g}_{L^{q_2}},
\]
for every $f,\,g\in \S$.
\end{cor}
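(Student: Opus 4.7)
The plan is to reduce this corollary to the preceding main bilinear theorem by exploiting the product-type amplitude inclusion proved in the Example following Definition \ref{defn multilinear MRS amplitudes}. That example shows
\[
   L^\infty S^{m}_{\varrho}(n,2) \subset \bigcap_{m_1 + m_2 = m} L^\infty_\Pi S^{(m_1,m_2)}_{(\varrho,\varrho)}(n,2),
\]
so any symbol $a\in L^\infty S^{m}_\varrho(n,2)$ may be viewed as lying in the product-type class for any splitting $m = m_1+m_2$ we choose. I would then set $p=\infty$ in the main theorem (so that $p'=1\leq q_1$, and $\frac{1}{r_2}=\frac{1}{p}+\frac{1}{q_1}=\frac{1}{q_1}$, i.e.\ $r_2=q_1$, matching the $\textarc{m}$ occurring in the hypothesis).

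For the first (strong-type) assertion, given $m<\textarc{m}(\varrho,\infty,q_1)+\textarc{m}(\varrho,q_1,q_2)$, I would pick a small $\varepsilon>0$ so that
\[
   m_1 := \textarc{m}(\varrho,\infty,q_1) - \varepsilon, \qquad m_2 := m - m_1,
\]
satisfies $m_1 < \textarc{m}(\varrho,\infty,q_1)$ and $m_2 < \textarc{m}(\varrho,q_1,q_2)$. Then $a\in L^\infty_\Pi S^{(m_1,m_2)}_{(\varrho,\varrho)}(n,2)$ and the hypotheses of the main bilinear theorem hold, yielding the $L^{q_1}\times L^{q_2}\to L^r$ bound immediately.

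For the Lorentz-type conclusion, where $1\leq q_2<2\leq q_1$ and
\[
   \textarc{m}(\varrho,\infty,q_1) + \textarc{m}(\varrho,q_1,q_2)\leq m < \textarc{m}(\varrho,\infty,q_1) + \mathcal{M}(\varrho,q_1,q_2),
\]
I need to find a splitting $m=m_1+m_2$ that lands in the Lorentz regime of the main theorem, i.e.\ $m_1<\textarc{m}(\varrho,\infty,q_1)$ together with $\textarc{m}(\varrho,q_1,q_2)\leq m_2 < \mathcal{M}(\varrho,q_1,q_2)$. These two conditions translate into requiring
\[
   m-\mathcal{M}(\varrho,q_1,q_2) \;<\; m_1 \;\leq\; \min\!\bigl(\textarc{m}(\varrho,\infty,q_1)-\varepsilon,\; m-\textarc{m}(\varrho,q_1,q_2)\bigr),
\]
for some tiny $\varepsilon>0$ inserted to get the strict inequality $m_1<\textarc{m}(\varrho,\infty,q_1)$. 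The lower hypothesis on $m$ guarantees $\textarc{m}(\varrho,\infty,q_1)\leq m - \textarc{m}(\varrho,q_1,q_2)$, so the upper bound simplifies; the upper hypothesis on $m$ guarantees that the resulting interval is nonempty. Pick any such $m_1$ and set $m_2=m-m_1$; then the Lorentz half of the main bilinear theorem applies and delivers the $L^{q_1}\times L^{q_2}\to L^{r,q_2}$ estimate.

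The only potentially delicate point is the elementary bookkeeping of the inequalities to guarantee that the splitting exists in the Lorentz case, but this is essentially arithmetic once the two regions of the main theorem are written side by side; no genuine analytic obstacle arises, since all the hard work (linear rough FIO bounds, iteration argument) is already encapsulated in Theorems~\ref{thm:Linear}, \ref{thm:p2}, and the main bilinear result.
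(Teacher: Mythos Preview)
Your approach is correct and is precisely the one the paper has in mind: the corollary is stated without proof because it follows immediately from the main bilinear theorem together with the inclusion of Example~\ref{interesting example}, and your choice of $p=\infty$ (whence $r_2=q_1$) and the splitting $m=m_1+m_2$ is exactly the right reduction. One small point you should make explicit: the inclusion in Example~\ref{interesting example} is stated only for $m\le 0$ and $m_j\le 0$, so you must check that your chosen $m_1,m_2$ are nonpositive; this is automatic since $\textarc{m}(\varrho,\infty,q_1)\le 0$, $\textarc{m}(\varrho,q_1,q_2)\le 0$, and (in the Lorentz case) $\mathcal{M}(\varrho,q_1,q_2)<0$ for $q_2<2$, but it deserves a sentence.
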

\begin{rem} In the case $\varrho=1,\, r=1$ the previous corollary yields a global bilinear $L^2\times L^2\to L^1$ extension of H\"ormander and Eskin's local $L^2$ boundedness of zeroth order linear FIOs. Observe that in our global case, it suffices that the order $m$ is strictly negative since $\textarc{m}(1,\infty,2)+\textarc{m}(1,2,2)=0$. Furthermore for $m<\textarc{m}(1,\infty,\infty)+\textarc{m}(1,\infty,2)=-\frac{n-1}{2}$ we get the $L^\infty \times L^2 \to L^2$ boundedness of $T_a$ (see Theorem \ref{Thm:generalmultilinear} for the non-endpoint case).

Somewhat more interestingly, in the case $p=q_1=\infty$, $q_2=1$ and $\varrho=1$, the $L^\infty\times L^1\to L^1$ boundedness is valid provided the order $m<-n+1$. This can be compared with the result in \cite{MR2679898} where a local result has been obtained for a class of FIOs with more general amplitudes and phases than ours but with $m<-n + \frac{1}{2}$.
\end{rem}

We will illustrate the previous result with an application concerning certain bilinear oscillatory integrals. For the sake of simplicity, we will consider only the case $q_1,q_2\geq 2$.
\begin{cor} Let $\varphi_1,\varphi_2\in \Phi^2$ satisfying the SND condition. Let $2\leq q_1,q_2\leq \infty$ and
\[
    \frac{1}{r}=\frac{1}{q_1}+\frac{1}{q_2}.
\]
Let
$
    \sigma(\xi_1,\xi_2)=\frac{e^{i\abs{\xi}^\alpha}}{\abs{\xi}^\beta} \theta\brkt{\xi}$,  with $\alpha\in (0,1), \beta>0$,
where $\xi=(\xi_1,\xi_2)\in \R^{2n}$ and $\theta$ is a smooth function on $\R^{2n}$, which vanishes near the origin and equals $1$ outside a bounded set. Assume that
\[
    \alpha\in (0,1),\qquad \beta>\alpha n+(n-1)\brkt{1-\frac{1}{r}},
\]
Define for $0<t\leq 1$, $\sigma_t(\xi)=t^{\beta}a(t\xi)$. Then
\[
    \norm{\sup_{0<t\leq 1} \abs{T_{\sigma_t}\brkt{f,g}}}_{L^r}\lesssim \norm{f} _{L^{q_1}}\norm{g}_{L^{q_2}},
\]
for all $f,\,g\in \S$.

\end{cor}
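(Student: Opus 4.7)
The plan is to combine the pointwise-in-$t$ bilinear FIO estimates from Corollary \ref{cor: cor of the main bilinear} with a Sobolev-type argument in the dilation parameter $t$. First I would verify that the family $\{\sigma_t\}_{t\in(0,1]}$ lies in $L^{\infty}S^{-\beta}_{1-\alpha}(n,2)$ with bounds uniform in $t$. A direct computation via the Faà di Bruno formula gives, on the support of $\theta(t\cdot)$,
\[
\abs{\d^{\gamma}_{\xi}\sigma_t(\xi)}\lesssim \sum_{k\leq \abs{\gamma}} (t\abs{\xi})^{\alpha k}\abs{\xi}^{-\beta-\abs{\gamma}}\lesssim \abs{\xi}^{-\beta-(1-\alpha)\abs{\gamma}},
\]
where the last bound uses $0<\alpha<1$ together with $t\leq 1$, so that $(t|\xi|)^{\alpha k}$ is absorbed into $|\xi|^{\alpha|\gamma|}$. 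On the transitional region $|t\xi|\lesssim 1$ the cut-off $\theta(t\xi)$ is bounded with all its derivatives contributing harmless factors. Applying Corollary \ref{cor: cor of the main bilinear} with $\varrho = 1-\alpha$ and $p=\infty$, and noting that since $q_1,q_2\geq 2$ one has
\[
\textarc{m}(1-\alpha,\infty,q_1)+\textarc{m}(1-\alpha,q_1,q_2) = -n\alpha-(n-1)\brkt{1-\tfrac{1}{r}},
\]
the hypothesis $\beta>\alpha n + (n-1)(1-1/r)$ yields the uniform-in-$t$ estimate $\norm{T_{\sigma_t}(f,g)}_{L^r}\lesssim \norm{f}_{L^{q_1}}\norm{g}_{L^{q_2}}$.

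To pass to the supremum I would use a dyadic Sobolev embedding in $t$. Writing $(0,1]=\bigcup_{k\geq 0}I_k$ with $I_k=[2^{-k-1},2^{-k}]$, the standard 1-D estimate on each $I_k$ followed by summation gives the pointwise-in-$x$ bound
\[
\sup_{0<t\leq 1}\abs{T_{\sigma_t}(f,g)(x)}^{2}\lesssim \int_{0}^{1}t^{-1}\abs{T_{\sigma_t}(f,g)(x)}^{2}\, \dd t+\int_{0}^{1}t\,\abs{\d_t T_{\sigma_t}(f,g)(x)}^{2}\, \dd t.
\]
Taking $L^{r/2}$-norms and using Minkowski's integral inequality reduces matters to integrability in $t$ of $\norm{T_{\sigma_t}(f,g)}_{L^r}^{2}$ (with weight $t^{-1}$) and $\norm{\d_t T_{\sigma_t}(f,g)}_{L^r}^{2}$ (with weight $t$). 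A direct differentiation on $\{\theta(t\xi)=1\}$ produces
\[
\d_t\sigma_t(\xi)=i\alpha\, t^{\alpha-1}\abs{\xi}^{\alpha-\beta}\,e^{it^{\alpha}\abs{\xi}^{\alpha}},
\]
which factors as $t^{\alpha-1}\widetilde{\sigma}_t$ with $\widetilde{\sigma}_t\in L^{\infty}S^{\alpha-\beta}_{1-\alpha}(n,2)$ uniformly in $t$, so that the second integral is controlled by $\int_{0}^{1}t^{2\alpha-1}\, \dd t\cdot\norm{T_{\widetilde{\sigma}_t}(f,g)}_{L^{r}}^{2}$, which is finite in $t$ since $\alpha>0$.

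The main obstacle lies in closing the remaining $\alpha$-gap: the uniform bound for $T_{\widetilde{\sigma}_t}$ and the handling of the first weighted integral $\int_0^1 t^{-1}\norm{T_{\sigma_t}(f,g)}_{L^r}^2\,\dd t$ would both, taken at face value, demand the stronger condition $\beta>(n+1)\alpha+(n-1)(1-1/r)$. To recover the sharp condition stated, I would refine the Sobolev embedding by using a fractional derivative $\abs{D_t}^{s}$ for $s$ just above $1/2$ and exploiting that on the dyadic scale $t\sim 2^{-k}$ the oscillatory factor $e^{it^{\alpha}\abs{\xi}^{\alpha}}$ has effective $t$-frequency $\alpha t^{\alpha-1}|\xi|^\alpha$, so that $\abs{D_t}^{s}\sigma_t\sim t^{s(\alpha-1)}\abs{\xi}^{s\alpha-\beta}e^{it^{\alpha}\abs{\xi}^{\alpha}}$ lies uniformly in $L^{\infty}S^{s\alpha-\beta}_{1-\alpha}(n,2)$. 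Combining this with Hardy's inequality (applicable since $T_{\sigma_t}(f,g)\to 0$ as $t\to 0$ because $\sigma_t$ is supported in $|\xi|\gtrsim 1/t$) to tame the $t^{-1}$-weighted term, and then optimising over $s\in(1/2,1]$, one recovers the stated threshold $\beta>\alpha n+(n-1)(1-1/r)$. Finishing the argument then amounts to absorbing the boundary contribution $T_{\sigma_1}(f,g)$, for which Corollary \ref{cor: cor of the main bilinear} applies directly.
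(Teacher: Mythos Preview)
Your route is genuinely different from the paper's, and the paper's is much shorter. The paper does not integrate or differentiate in $t$ at all; it uses the \emph{linearization trick}. Since the amplitude class $L^{\infty}S^{m}_{\varrho}(n,2)$ in Definition~\ref{defn multilinear MRS amplitudes} requires only measurability in $x$, one may replace the supremum over $t$ by an arbitrary measurable selection $t(x)\colon \R^{n}\to(0,1]$ and consider the single rough amplitude
\[
a(x,\xi_1,\xi_2)=\sigma_{t(x)}(\xi_1,\xi_2)=\frac{e^{i\abs{t(x)\xi}^{\alpha}}}{\abs{\xi}^{\beta}}\,\theta\!\brkt{t(x)\xi}.
\]
The very computation you carried out for fixed $t$ shows that $a\in L^{\infty}S^{-\beta}_{1-\alpha}(n,2)$ with seminorms depending only on $\alpha,\beta,\theta,n$ and \emph{not} on the particular function $t(\cdot)$. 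Corollary~\ref{cor: cor of the main bilinear} then gives $\norm{T_{a}(f,g)}_{L^{r}}\lesssim \norm{f}_{L^{q_1}}\norm{g}_{L^{q_2}}$ with a constant independent of $t(\cdot)$, and taking the supremum over measurable selections yields the maximal inequality immediately. This is precisely the mechanism that motivated the rough amplitude classes in \cite{MR2357989}.

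Your Sobolev-in-$t$ strategy is a valid general technique for maximal estimates, but as you yourself flag, it does not close at the stated threshold without further input: the uniform bound alone leaves $\int_{0}^{1}t^{-1}\norm{T_{\sigma_t}(f,g)}_{L^{r}}^{2}\,\dd t$ divergent, and each $t$-derivative costs an extra $\abs{\xi}^{\alpha}$ in symbol order. The fractional-derivative repair you sketch (``optimising over $s\in(1/2,1]$ recovers the stated threshold'') is not substantiated: you would need to prove that $\abs{D_t}^{s}\sigma_t$ genuinely lies in $L^{\infty}S^{s\alpha-\beta}_{1-\alpha}(n,2)$ uniformly in $t$ --- including across the transition region where $\theta(t\xi)$ is non-constant --- and that the Hardy/Sobolev machinery works in the $L^{r/2}$-valued setting when $r$ may be below $2$. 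None of this is needed once you linearize.
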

\begin{proof} In order to prove the result, it suffices to consider the bilinear FIO with amplitude
\[
    a(x,\xi_1,\xi_2)=\frac{e^{i\abs{t(x)\xi}^\alpha}}{\abs{\xi}^\beta} \theta\brkt{t(x)\xi},
\]
for an arbitrary measurable function $t(x)\in [0,1]$. It can be shown that $a\in L^\infty S^{-\beta}_{1-\alpha}(n,2)$ for the given range of $\alpha,\beta ,$ and thereby the result follows from Corollary \ref{cor: cor of the main bilinear}.
\end{proof}

\parindent 0pt
\section{Boundedness of multilinear FIOs}
The following theorem yields the boundedness of a rather large class of rough multilinear Fourier integral operators on $L^{r}$ spaces for $0<r\leq \infty$. In the case of operators defined with phase functions that are inhomogeneous in the $\xi$-variable, i.e. more general multilinear oscillatory integral operators, we are also able to show a boundedness result in case the multilinear operator acts on $L^2$ functions. More precisely we have

\begin{thm}\label{Thm:generalmultilinear} Let $1\leq p\leq \infty$, $m_{j}<0,$ $j= 1,\dots N,$ and suppose that $\frac{\sum_{j=1}^{N} m_{j}}{\min_{j=1,\dots,N}m_{j}}\geq \frac{2}{p}$. Assume that the amplitude $a(x,\xi_{1},\dots, \xi_{N})\in L^{p}_{\Pi} S^{(m_1, \dots, m_N)}_{(1,\dots, 1)}(n,N)$ and the phase functions $\varphi_j\in \Phi^2,$ $j=1, \dots, N,$ are all strongly non-degenerate and belong to the class $\Phi^2.$

For $1\leq q_j < \infty$ in case $p=\infty,$ and $1\leq q_j \leq \infty$ in case $p\neq \infty,$ $j=1,\dots,N, $ let
\[
    \frac{1}{r}=\frac{1}{p}+\sum_{j=1}^{N}\frac{1}{q_j}.
\]
Then the multilinear FIO $T_a$, given by \eqref{defn RS FIO} or its equivalent representation

\[T_a(f_1, \dots, f_N) (x)= \int_{\R^{Nn}} a(x,\xi_1, \dots, \xi_N)\, e^{ i\sum_{j=1}^{N} \varphi_{j}(x, \xi_{j})} \, \prod_{j=1}^{N}\hat{f}(\xi_j)\,d\xi_1\dots d\xi_N\]

satisfies the estimate
\[
    \norm{T_a(f_1,\dots, f_N)}_{L^r}\leq C_{a,n} \norm{f_1}_{L^{q_1}}\dots\norm{f_N}_{L^{q_N}},
\]
provided that
\[
    m_j<\textarc{m}(1,\frac{p(\sum_{k=1}^{N}m_k)}{m_j},q_j),\quad {\rm for}\, j=1, \dots , N.
\]
In particular, if $a(x,\xi_1, \dots, \xi_N)$ verifies the estimate
\begin{equation}\label{L infty symbol estim}
\norm{\partial_{\xi_1}^{\alpha_1}\dots\partial_{\xi_N}^{\alpha_N} a(\cdot,\xi_1,\dots,\xi_N)}_{L^{\infty}}\leq C_{\alpha_{1}\dots \alpha_{N}} (1+|\xi_1|+\dots+|\xi_N|)^{m-\sum_{j=1}^{N}\abs{\alpha_j}},
\end{equation}
then $T_a$ is bounded from $L^{q_1} \times\dots\times L^{q_N} \to L^{r}$ provided that $\frac{1}{r}=\sum_{j=1}^{N}\frac{1}{q_j}$ and
\[
    m<-(n-1)\sum_{j=1}^{N}\abs{\frac{1}{q_j}-\frac{1}{2}}.
\]
Furthermore, $T_{a}$ with $a$ as in \eqref{L infty symbol estim} is bounded from $L^2 \times \dots\times L^2 \to L^{\frac{2}{N}}$ provided that $m<0$ and the phases $\varphi_{j}\in C^{\infty}(\R^n \times \R^n)$ are strongly non-degenerate and verify the condition $ |\partial_{x}^{\alpha} \partial^{\beta}_{\xi} \varphi_{j} (x, \xi)|\leq C_{j,\alpha,\beta}$ for $j=1,\dots, N$ and all multi-indices $\alpha $ and $\beta$ with $2\leq |\alpha|+|\beta|.$ Note in this case, we do not require any homogeneity from the phase functions.

\end{thm}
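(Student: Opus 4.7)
\vspace{3mm}
\noindent\textbf{Proof proposal.} The plan is to reduce the multilinear operator $T_a$ to a superposition of tensor products of linear Fourier integral operators via a Coifman--Meyer type decomposition, to distribute the resulting scalar coefficient across the $N$ linear pieces in an $L^p$--balanced manner, and finally to invoke Theorem B on each factor separately.

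Concretely, I would first dyadically localize $a$ in each frequency variable by Littlewood--Paley cutoffs $\Psi_{k_j}(\xi_j)$, and on each piece where $|\xi_j|\sim 2^{k_j}$ for all $j$, expand into a Fourier series over the product cube $\prod_j[-c 2^{k_j},c 2^{k_j}]^n$:
\[
a(x,\xi_1,\ldots,\xi_N)=\sum_{\vec k\in\Z_+^N}\sum_{\vec\ell\in\Z^{Nn}} c_{\vec\ell,\vec k}(x)\prod_{j=1}^N \sigma_j^{(k_j,\ell_j)}(\xi_j),
\]
with $\sigma_j^{(k_j,\ell_j)}(\xi_j):= e^{i\pi 2^{-k_j}\pr{\ell_j}{\xi_j}}\widetilde\Psi_{k_j}(\xi_j)$ a smooth univariate amplitude supported on $|\xi_j|\sim 2^{k_j}$. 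Repeated integration by parts in each $\xi_j$ against the symbol estimate \eqref{eq:Lp} yields, for every $M>0$,
\[
\norm{c_{\vec\ell,\vec k}}_{L^p}\lesssim_M \prod_{j=1}^N (1+\abs{\ell_j})^{-M}\cdot 2^{k_j m_j}.
\]
Since the phase splits as $\sum_j\varphi_j(x,\xi_j)$, the operator factors as
\[
T_a(f_1,\ldots,f_N)(x)=\sum_{\vec k,\vec\ell} c_{\vec\ell,\vec k}(x)\prod_{j=1}^N T_j^{(k_j,\ell_j)}f_j(x),
\]
where $T_j^{(k_j,\ell_j)}$ is the linear FIO of phase $\varphi_j$ and amplitude $\sigma_j^{(k_j,\ell_j)}$.

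Next I set $p_j:=p(\sum_k m_k)/m_j$; the hypothesis $\sum_k m_k/\min_k m_k\geq 2/p$ guarantees $p_j\geq 2$, and $\sum_j 1/p_j = 1/p$, so $\tfrac{1}{r}=\sum_j(\tfrac{1}{p_j}+\tfrac{1}{q_j})=\sum_j \tfrac{1}{r_j}$. Factorize $|c_{\vec\ell,\vec k}(x)|=\prod_j |c_{\vec\ell,\vec k}(x)|^{p/p_j}$ (placing the phase of $c_{\vec\ell,\vec k}$ on one factor); the resulting rough linear amplitude $c_j^{(\vec\ell,\vec k)}(x)\sigma_j^{(k_j,\ell_j)}(\xi_j)$ is dyadically supported on $|\xi_j|\sim 2^{k_j}$ and, for any $m_j^\ast\in(m_j,\textarc{m}(1,p_j,q_j))$, belongs to $L^{p_j}S^{m_j^\ast}_1$ with seminorms bounded by $\norm{c_{\vec\ell,\vec k}}_{L^p}^{p/p_j}\cdot 2^{-k_j m_j^\ast}\cdot(1+\abs{\ell_j})^L$ for some fixed $L$. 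Theorem B applied to each such factor, combined with Hölder's inequality on $L^r=\prod L^{r_j}$ and the identity $\sum_j p/p_j=1$, telescopes to
\[
\norm{T_a(f_1,\ldots,f_N)}_{L^r}\lesssim \sum_{\vec k,\vec\ell}\prod_{j=1}^N (1+\abs{\ell_j})^{L-M}\, 2^{k_j(m_j-m_j^\ast)}\prod_{j=1}^N\norm{f_j}_{L^{q_j}},
\]
which is absolutely summable provided $M$ is chosen large enough, using $m_j<m_j^\ast$.

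For the Hörmander-type corollary, Example \ref{interesting example} permits viewing $a$ satisfying \eqref{L infty symbol estim} as an element of $L^\infty_\Pi S^{(m_1,\ldots,m_N)}_{(1,\ldots,1)}(n,N)$ for any decomposition $m=\sum_j m_j$. With $p=\infty$ we obtain $p_j=\infty$ and $\textarc{m}(1,\infty,q_j)=-(n-1)\abs{1/q_j-1/2}$; choosing each $m_j$ just below this threshold and summing recovers the sharp order $m<-(n-1)\sum_j\abs{1/q_j-1/2}$. For the concluding $L^2\times\cdots\times L^2\to L^{2/N}$ statement with non-homogeneous phases, the same Coifman--Meyer decomposition applies, but Theorem B is replaced by the Asada--Fujiwara $L^2\to L^2$ bound for linear FIOs whose phases satisfy SND and have bounded mixed derivatives of order $\geq 2$, which requires no homogeneity. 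An $N$-fold Hölder then delivers the $L^{2/N}$ estimate, with summability of the dyadic series secured by the strict inequality $m<0$. The main obstacle I anticipate is establishing the $L^p$ bound on $c_{\vec\ell,\vec k}$ uniformly in $\vec k$ when $a$ is only measurable in $x$, and verifying that the distribution $|c_{\vec\ell,\vec k}|=\prod_j|c_{\vec\ell,\vec k}|^{p/p_j}$ places each factorized amplitude in the correct rough class with the seminorm dependence required for Theorem B.
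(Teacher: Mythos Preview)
Your proposal is correct and shares the paper's overall strategy—Fourier-series separation of variables on each dyadic shell, splitting the scalar coefficient via the exponents $p_j=p\sum_k m_k/m_j$ (so that $p_j\geq 2$ and $\sum 1/p_j=1/p$), and then invoking Theorem~\ref{thm:p2} on each linear factor—but the execution differs in two notable respects.  First, you take a \emph{separate} Littlewood--Paley decomposition in each $\xi_j$ (multi-index $\vec k$), whereas the paper uses a \emph{joint} decomposition in $(\xi_1,\ldots,\xi_N)\in\R^{Nn}$ with a single scale index $j$; your choice meshes more naturally with the product structure of $L^p_\Pi S^{\m}_{\rho}(n,N)$ and makes the coefficient estimate $\|c_{\vec\ell,\vec k}\|_{L^p}\lesssim\prod_j(1+|\ell_j|)^{-M}2^{k_jm_j}$ immediate.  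Second, you apply Theorem~\ref{thm:p2} at each fixed $\vec k$ and then sum the operator bounds directly, using the slack $m_j<m_j^\ast$; the paper instead first applies Cauchy--Schwarz over its single index $j$, producing square functions $\bigl(\sum_j|T_{\theta^{j,i}}f_i|^2\bigr)^{1/2}$, and then uses \emph{Khinchin's inequality} to rewrite each square function as a randomized sum $\sum_j\varepsilon_j(t)T_{\theta^{j,i}}$, which is a single linear FIO with a bona fide $L^{p_i}S^{s_i}_1$ amplitude to which Theorem~\ref{thm:p2} is applied once.  Your route is more elementary (no randomization) at the price of a multi-parameter sum; the paper's Rademacher device neatly packages all scales into one symbol.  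Both reach the same threshold $m_j<\textarc{m}(1,p_j,q_j)$, and your treatment of the two corollaries (via Example~\ref{interesting example} and via Asada--Fujiwara) matches the paper's.  One small point: when $r<1$ you must replace the triangle inequality by the $r$-subadditivity $\|\sum\|_{L^r}^r\leq\sum\|\cdot\|_{L^r}^r$ before summing over $\vec k,\vec\ell$; the geometric series still converges.
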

\begin{proof}
We will only give the proof of the theorem in the case of bilinear operators, since using the well-known inequality
\[\sum_{j\geq 0}\prod_{1\leq k\leq N} |a_{j,k}| \leq \prod_{1\leq k\leq N} \brkt{\sum_{j\geq 0} |a_{j,k}|^2}^{\frac{1}{2}}\]
and the H\"older inequality in \eqref{eq:bilinear} below yield the result in the multilinear case.

Let $\{\Psi_j\}_{j\geq 0}$ a Littlewood-Paley partition of unity in $\R^{2n}$ as in \eqref{eq:LittlewoodPaley}. Let for $j\geq 0$, $a_j(x,\xi, \eta)=a(x,\xi, \eta)\Psi_j(\xi, \eta)$.
Then we have
\[
    T_a(f,g)(x)=\sum_{j\geq 0} 2^{2jn} \iint a_{j}(x,2^{j}\xi, 2^{j} \eta) \widehat{f}(2^{j}\xi)\widehat{g}(2^{j}\eta)e^{i\varphi_{1}(x,2^{j}\xi)+i\varphi_{2}(x,2^{j}\eta)}\, d\xi \, d\eta.
\]
 Now since for any $j\geq 0$, $\Psi_j(2^j\xi, 2^{j} \eta)$ is supported in $B(0,2)\subset \T^{2n}$, following the argument in Theorem \ref{Linearlow} and expanding the amplitudes in Fourier series, we obtain
\[
    a_{j}(x,2^{j}\xi, 2^{j} \eta)=\sum_{(k,l)\in \Z^{2n}} a_{k,l}^j(x) e^{i\p{k,\xi}+i\p{l,\eta}}.
\]
Moreover, for all natural numbers $s\geq 1$, we have
\[
    \abs{a_{k,l}^j(x)}\lesssim \frac{1}{1+\abs{(k,l)}^s} \sum_{\alpha_1 + \alpha_{2}=\alpha;\,\abs{\,\alpha}\leq s}\int_{\T^{2n}} \abs{\partial^{\alpha_1}_\xi \partial^{\alpha_2}_\eta \brkt{a_{j}(x,2^j\xi, 2^{j} \eta)}}\, d\xi\, d\eta,
\]
and
\begin{equation}
\begin{split}
    \Vert a_{k,l}^j\Vert_{L^{p}}&\leq  \frac{1}{1+\abs{(k,l)}^s}  \sum_{\alpha_1 + \alpha_{2}=\alpha;\,\abs{\,\alpha}\leq s}\int_{\T^{2n}} \Vert \partial^{\alpha_1}_\xi \partial^{\alpha_2}_\eta a_{j}(x,2^j\xi, 2^{j} \eta)\Vert_{L^{p}_{x}}\, d\xi\, d\eta\\&\lesssim \frac{2^{j(m_1+m_2)}}{1+\abs{(k,l)}^s}.
\end{split}
\end{equation}
We now take a $\zeta\in C_{0}^{\infty}(\R^{n})$ equal to one in the cube $[-3,3]^n$ and such that $\supp \zeta\subset \T^{n}.$ This yields
\[
    \begin{split}
    T_a(f,g)(x)&=\sum_{(k,l)\in \Z^{2n}} \sum_{j\geq 0} a_{k,l}^j(x)\iint\zeta(2^{-j}\xi)\zeta(2^{-j}\eta) e^{i\p{2^{-j}k,\xi}}e^{i\p{2^{-j}l,\eta}}\widehat{f}(\xi)\widehat{g}(\eta)e^{i\varphi_{1}(x,\xi)+i\varphi_{2}(x,\eta)}\, d\xi\, d\eta\\
    &=\sum_{(k,l)\in \Z^{2n}} \sum_{j\geq 0} {\rm sgn\,}\brkt{a_{k,l}^{j}(x)}T_{\theta^{j,1}_{k,l},\varphi_1}(f)(x)T_{\theta^{j,2}_{k,l},\varphi_2}(g)(x),
    \end{split}
\]
where $\theta^{j,1}_{k,l}(x,\xi)=\abs{a_{k,l}^j(x)}^{\frac{m_1}{m_1+m_2}}\zeta(2^{-j}\xi)e^{i\p{2^{-j}k,\xi}},$  $\theta^{j,2}_{k,l}(x,\eta)=\abs{a_{k,l}^j(x)}^{\frac{m_2}{m_1+m_2}}\zeta(2^{-j}\eta)e^{i\p{2^{-j}l,\eta}}$ and ${\rm sgn\,}z=\frac{z}{\abs{z}}$ if $z\neq 0$ and zero elsewhere. If we let $R=\min (1,r),$ then the Cauchy-Schwarz and the H\"older inequalities yield
\begin{equation}\label{eq:bilinear}
    \norm{T_a(f,g)}_{L^r}^{R}\leq \sum_{(k,l)\in \Z^{2n}}
    \norm{\brkt{\sum_{j\geq 0} \abs{T_{\theta^{j,1}_{k,l},\varphi_1}(f)(x)}^2}^{\frac{1}{2}} }_{L^{r_1}}^{R}
    \norm{\brkt{\sum_{j\geq 0} \abs{T_{\theta^{j,2}_{k,l},\varphi_2}(g)(x)}^2}^{\frac{1}{2}}}_{L^{r_2}}^{R},
\end{equation}
where $\frac{1}{r}=\frac{1}{r_1}+\frac{1}{r_2}$, and
\[
    \frac{1}{r_1}=\frac{1}{q_1}+\frac{m_1}{p(m_1+m_2)},\qquad  \frac{1}{r_2}=\frac{1}{q_2}+\frac{m_2}{p(m_1+m_2)}.
\]
At this point we use Khinchin's inequality which yields that
\[
     \norm{\brkt{\sum_{j\geq 0} \abs{T_{\theta^{j,1}_{k,l},\varphi_1}(f)(x)}^2}^{\frac{1}{2}} }_{L^{r_1}(\R^n)}\lesssim
     \norm{\sum_{j\geq 0 } \varepsilon_j (t) T_{\theta^{j,1}_{k,l},\varphi_1}(f)(x)}_{L_{x,t}^{r_1}(\R^n\times [0,1])},
\]
where $\{\varepsilon_j (t)\}_j$ are the Rademacher functions.
Observe that the inner term is a linear FIO with the phase function $\varphi_1$ and the amplitude
\[
    \sigma^1_{k,l}(t,x,\xi)=\sum_{j\geq 0 } \varepsilon_{j}(t)\theta^{j,1}_{k,l}(x,\xi),\quad t\in [0,1],\, x,\xi\in \R^n.
\]
Picking $s_1,s_2$ such that $m_i<s_i<\textarc{m}(1,\frac{p(m_1+m_2)}{m_i},q_i)$, for $i=1,2,$ then since $\supp \zeta\subset B(0,2)$, one can see that for any multi-index $\alpha$
\[
    \abs{\d^\alpha_{\xi}\brkt{\varepsilon_j (t)\zeta(2^{-j}\xi)e^{i\p{2^{-j}k,\xi}}}}\lesssim \p{\xi}^{s_1-\abs{\alpha}}\brkt{1+\abs{k}^{\abs{\alpha}}}2^{-js_1},
\]
with a constant which is uniform in $j$ and $t$. In particular, $ \sigma^1_{k,l}\in L^{\frac{p(m_1+m_2)}{m_1}} S^{s_1}_1$ and
\[
    \norm{\d^\alpha_{\xi} \sigma^1_{k,l}(t,x,\xi)}_{L^{\frac{p(m_1+m_2)}{m_1}}}\lesssim \frac{\p{\xi}^{s_1-\abs{\alpha}}\brkt{1+\abs{k}^{\abs{\alpha}}}}{\brkt{1+\abs{k}^s}^{\frac{m_1}{m_1+m_2}}}.
\]
By the hypothesis on $m_1,m_2,$ we have that $\frac{p(m_1+m_2)}{m_1}\geq 2$, and therefore we can apply Theorem \ref{thm:p2} which yields
\[
    \norm{\sum_{j\geq 0 } \varepsilon_j (t) T_{\theta^{j,1}_{k,l},\varphi_1}(f)(x)}_{L^{r_1}(\R^n\times [0,1])}\lesssim \frac{1+\abs{k}^{\mu_1}}{\brkt{1+\abs{k}^s}^{\frac{m_1}{m_1+m_2}}} \norm{f}_{L^{q_1}},
\]
for a certain natural number $\mu_1.$
Arguing in the same way with the second term of \eqref{eq:bilinear} we have 
\[
    \norm{T_a(f,g)}_{L^r}^R\leq \sum_{(k,l)\in \Z^{2n}} \brkt{\frac{\brkt{1+\abs{k}^{\mu_1}}\brkt{1+\abs{l}^{\mu_2}}}{{1+\abs{(k,l)}^s}}}^R \norm{f}_{L^{q_1}}^R\norm{g}_{L^{q_2}}^R.
\]
Therefore by choosing $s$ large enough, we obtain the desired boundedness result.

The second part of the theorem concerning amplitudes satisfying the estimate \eqref{L infty symbol estim}, follows from our first result. Indeed if $p=\infty$ and $\varrho=1$ then $\textarc{m}(1,\infty ,q_j)= -(n-1)|\frac{1}{q_{j}}-\frac{1}{2}|$ and since according to Example \ref{interesting example}

\[ L^{\infty} S^{m}_1(n,N)\subset \bigcap_{m_1+\dots+m_N=m} L^{\infty}_\Pi S^{(m_1,\dots,m_N)}_{(1,\dots,1)}(n,N),\] for $m_j <0$, one can see using our previous claim concerning product type amplitudes, that the result follows provided $m<-(n-1)\sum_{j=1}^{N} |\frac{1}{q_{j}}-\frac{1}{2}|.$

The last assertion is a direct consequence of the method of proof of the first claim, and the $L^2$ boundedness of oscillatory integral operators with amplitudes in $S^{0}_{0,0}$ and strongly non-degenerate inhomogeneous phase functions satisfying the hypothesis of our theorem, which is due to K. Asada and D. Fujiwara \cite{AF}. The proof of the theorem is therefore concluded.
\end{proof}
\begin{rem}
The phase functions of the form $\langle x, \xi_{j} \rangle + \psi_{j}(\xi_j)$, $j=1,\dots, N$ lie in the realm of the above theorem, where different boundedness results apply. Cases of particular interest for the applications in nonlinear PDE's are:
\begin{enumerate}
\item $\psi_{j}(\xi_j)= |\xi_j|$ with $\xi_{j} \in \R^n$ (wave equation),
\item $\psi_{j}(\xi_j)= |\xi_j|^2$ with $\xi_{j} \in \R^n$ (Schr\"odinger equation),
\item $\psi_{j}(\xi_j)= \xi_j^3$ with $\xi_j \in \R$ (Korteweg-de Vries equation),
\item $\psi_{j}(\xi_j)= \langle \xi_j \rangle$ with $\xi_{j} \in \R^n$ (Klein-Gordon equation).
\end{enumerate}
\end{rem}

\end{document}